\tikzset{>=latex}
\newtheorem{theorem}{Theorem}[section]
\newtheorem{lemma}[theorem]{Lemma}
\newtheorem{conj}[theorem]{Conjecture}
\newtheorem{prop}[theorem]{Proposition}
\newtheorem{corollary}[theorem]{Corollary}
\newtheorem{definition}[theorem]{Definition}
\newtheorem{obs}[theorem]{Observation}
\newenvironment{customthm}[1]
  {\innercustomthm}
  {\endinnercustomthm}
\theoremstyle{definition}
\DeclareMathOperator{\Ric}{Ric}
\DeclareMathOperator{\2}{II}
\DeclareMathOperator{\Bee}{B}
\DeclareMathOperator{\Ee}{E}
\DeclareMathOperator{\Ex}{X}
\DeclareMathOperator{\En}{N}
\DeclareMathOperator{\Em}{M}
\DeclareMathOperator{\Sphere}{S}
\DeclareMathOperator{\RP}{\mathbf{R}P}
\DeclareMathOperator{\CP}{\mathbf{C}P}
\DeclareMathOperator{\HP}{\mathbf{H}P}
\DeclareMathOperator{\OP}{\mathbf{O}P}
\DeclareMathOperator{\Disk}{D}
\DeclareMathOperator{\Inertia}{\mathcal{I}}
\DeclareMathOperator{\Structure}{\mathcal{S}^{Top/O}}
\DeclareMathOperator{\pRc}{\mathcal{R}^\text{pRc}}
\DeclareMathOperator{\psc}{\mathcal{R}^\text{psc}}
\DeclareMathOperator{\MpRc}{\mathcal{M}^\text{pRc}}
\DeclareMathOperator{\Mpsc}{\mathcal{M}^\text{psc}}
\DeclareMathOperator{\Riem}{\mathcal{R}}
\DeclareMathOperator{\Cores}{\mathcal{R}^\text{Cores}}
\DeclareMathOperator{\MCores}{\mathcal{M}^\text{Cores}}
\DeclareMathOperator{\kap}{cap}
\DeclareMathOperator{\Diff}{Diff}
\DeclareMathOperator{\Class}{\mathbf{Cores}}
\title{The space of positive Ricci curvature metrics on spin manifolds}
\author{Bradley Lewis Burdick}
\begin{document}
\maketitle

\begin{abstract} In this note we show that the space of all metrics of positive Ricci curvature on a spin manifold of dimension $4k-1$ for $k\ge 2$ has infinitely many path components provided that the manifold admits a very particular kind of metric.  \end{abstract}


\section{Introduction}\label{intro} 

\subsection{Main Results}


Associated to any smooth manifold $\Em^n$ there is the space of Riemannian metrics $\Riem(\Em^n)$. Regardless of the topology of $\Em^n$, $\Riem(\Em^n)$ is always convex and hence contractible. If we instead restrict ourselves to the space $\psc(\Em^n)$ of metrics of positive scalar curvature (henceforth psc), the topology may be very complicated for certain $\Em^n$. For a closed spin manifold of dimension $4k$, there is an obstruction for $\psc(\Em^{4k})$ to even be nonempty, the $\hat{A}$-genus. It was observed in \cite{Carr} that the $\hat{A}$-genus can be used in certain settings to detect multiple path components of $\psc(\Em^{4k-1})$ for a spin boundary. By constructing psc metrics on exotic spheres in dimension $(4k-1)$, Carr was able to show that $\psc(\Sphere^{4k-1})$ has infinitely many path components. 

We may restrict ourselves further to the space $\pRc(\Em^n)$ of all metrics of positive Ricci curvature (henceforth pRc), and ask if $\pRc(\Sphere^{4k-1})$ has infinitely many path components as well. Using a technique for performing pRc surgery developed in \cite{WraithSurgery}, an analogous family of pRc metrics on exotic spheres in dimension $(4k-1)$ is constructed in \cite{WraithExotic}. In \cite{WraithSphere} it was shown that each of these pRc metrics lie in the same path component of $\psc(\Sphere^{4k-1})$ as the metrics constructed in \cite{Carr} and hence $\pRc(\Sphere^{4k-1})$ has infinitely many path components as well. 

Using the psc connected sum of \cite{GL}, it is possible to extend Carr's argument to show that $\psc(\Em^{4k-1})$ has infinitely many path components for \emph{any} spin manifold, provided it admits a single psc metric (see \cite[Theorem 4.2.2.2]{TW}). In our previous work in \cite{BLBOld, BLBThesis,BLBNew}, we explored a technique for constructing pRc connected sums introduced in \cite{Per1}. In this note we will describe how it is possible to combine this technique for constructing pRc connected sums with the work of \cite{WraithExotic} to expand the result of \cite{WraithSphere} to show that $\pRc(\Em^{4k-1})$ has infinitely many path components for any spin manifold that admits a very particular kind of pRc metric, which we call \emph{core metrics} (see Definition \ref{Core}).

The author's previous work in \cite{BLBOld,BLBThesis,BLBNew} is dedicated to constructing examples of core metrics, which we summarize below in Theorem \ref{TheCores} and Corollary \ref{Connect}. following class of manifolds all admit core metrics. 

\pagebreak

\begin{definition}\label{ClassDefinition} 
Let $\Class$ be the class of smooth manifolds that contains $\Sphere^n$, $\CP^n$, $\HP^n$, and $\OP^2$ for $n\ge 2$ and satisfies the following two conditions.
\begin{enumerate}
\item \textbf{Closed under sphere bundles:} If $\Em^n\in \Class$ and $\Ee\rightarrow \Em^n$ is a rank $m\ge 4$ vector bundle, then $\Sphere(\Ee)\in \Class$. 
\item \textbf{Closed under connected sum:} If $\Em_1^n,\Em_2^n\in \Class$, then $\Em_1^n\# \Em_2^n \in \Class$. 
\end{enumerate}
\end{definition}

\noindent Our main result can be stated as follows.  

\begin{customthm}{A}\label{A} For $k\ge 2$, let $\Em^{4k-1} \in \Class$ be spin, then $\pRc\left(\Em^{4k-1}\right)$ has infinitely many path components. Moreover, if $L^{4k-1}$ is any lens space, then $\pRc\left(L^{4k-1}\#\Em^{4k-1}\right)$ has infinitely many path components. 
\end{customthm} 

\noindent We caution the reader that while every element of $\Class$ admits a core metric, it need not be spin (e.g. $\CP^{2k}\times\Sphere^3$ ). The proof of Theorem \ref{A} relies on the technique used in \cite{Carr}, which requires the manifold to be spin for the $\hat{A}$-genus to be an obstruction to the existence of a psc metric. To our knowledge Theorem \ref{A} provides the first examples of non-simply connected manifolds in infinitely many dimensions for which the space of pRc metrics has infinitely many path components. 

While Definition \ref{ClassDefinition} collects the most explicit examples for which the conclusions of Theorem \ref{A} hold, the proof of Theorem \ref{A} is based on a more general principle inspired by the work of \cite{Per1}. This principle is that two pRc manifolds with isometric boundaries can be glued together if the principal curvatures of one dominate the negative of the other, which allows us to think of pRc connected sums in terms of classes of pRc metrics satisfying certain boundary conditions. We are able to show that the conclusions of Theorem \ref{A} hold for any spin manifold that admits a core metric or a slightly weaker class of metrics, which we call \emph{socket metric} (see Definition \ref{Socket}).

\begin{customthm}{B}\label{B} For $k\ge 2$, if $\Em^{4k-1}$ is spin then $\pRc(\Em^{4k-1})$ has infinitely many path components provided that $\Em^{4k-1}$ admits a socket metric. 
\end{customthm}

\noindent Roughly, one can think of a core metric as a pRc metric with an embedded round hemisphere and a socket metric as a pRc metric that \emph{almost} admits an embedding of a round hemisphere. While every core metric is a socket metric, by \cite[Theorem 1]{Law} any manifold that admits a core metric is simply connected. By stating Theorem \ref{B} for manifolds that admit socket metrics, we are able to include the lens spaces in Theorem \ref{A}.  

While the topology of $\psc(\Em^n)$ and $\pRc(\Em^n)$ is interesting in its own right, we typically do not consider two metrics as distinct if there is an isometry between them. The action of the diffeomorphism group of $\Em^n$ acts on $\psc(\Em^n)$ and $\pRc(\Em^n)$ by pull-back so that the orbits of this action are isometry classes of psc and pRc metrics. We therefore define the moduli spaces of psc metrics $\Mpsc(\Em^n)$ and the moduli space of pRc metrics $\MpRc(\Em^n)$ as the orbit spaces of $\psc(\Em^n)$ and $\pRc(\Em^n)$ respectively under the action of the diffeomorphism group. 


In \cite{KS}, it was shown that $\Mpsc(\Ex^{4k-1})$ has infinitely many path components for any spin manifold $\Ex^{4k-1}$ admitting a psc metric that satisfies certain topological conditions (explained below in Section \ref{pRcsums}). This is achieved by showing that none of the metrics constructed using \cite{Carr,GL} that lie in the infinitely many path components of $\psc(\Ex^{4k-1})$ can lie in the same path component of $\Mpsc(\Ex^{4k-1})$. As $\Sphere^{4k-1}$ automatically satisfies the topological conditions of \cite{KS}, in \cite{WraithSphere} it is shown that $\MpRc(\Sphere^{4k-1})$ has infinitely many path components. We similarly are able to claim that $\MpRc(\Em^{4k-1})$ has infinitely many path components for any of the manifolds listed in Theorem \ref{A}, but only if they satisfy the topological conditions of \cite{KS}. 

\begin{customthm}{C}\label{ModuliList} Let $\Em^{4k-1}$ be any manifold of $\Class$ built out of only $\Sphere^n$ and \emph{trivial} sphere bundles, then $ \MpRc(\Em^{4k-1})$ has infinitely many path components. 

 Let $m$ be an odd number and $q_1,\dots, q_{2k}$ be any numbers relatively prime to $m$ that satisfy $e_i(q_1^2,\dots, q_{2k}^2)\equiv 0 \mod m$ for $1\le i\le k$, where $e_i$ denotes the elementary symmetric polynomial in $2k$-variables. Then $\MpRc(L^{4k-1}\# \Em^{4k-1})$
have infinitely many path components, where $L^{4k-1}$ is the lens space $L(m;q_1,\dots, q_{2k})$. 
\end{customthm}

\noindent We caution the reader that the assumption on the lens space is required to ensure that the topological assumptions of \cite{KS} are satisfied. For example $L(3;q_1,q_2,q_3,q_4)$ is not included in Theorem \ref{ModuliList} for any choice of $q_i$, but $L(5;1,1,2,2)$ is. While we make no general attempt to solve for admissible $q_i$ for a given $m$, as explained in \cite[Theorem]{EMSS}, for each fixed $k$ there is a sufficiently large prime $m$ and a lens space of dimension $4k-1$ with fundamental group $\mathbf{Z}/m\mathbf{Z}$ for which the hypotheses of Theorem \ref{ModuliList} can be satisfied. Thus Theorem \ref{ModuliList} implies that, for each $k\ge 2$, there is a non-simply connected manifold in dimension $4k-1$ for which the moduli space of pRc metrics has infinitely many path components.

While we have listed the most concrete examples for which the conclusion of Theorem \ref{ModuliList} is known to hold, the proof holds for any manifold that satisfies the topological conditions of \cite{KS} and the metric conditions of Theorem \ref{B}. 

\begin{customthm}{D}\label{SocketModuli} For $k\ge 2$, if $\Em^{4k-1}$ is a spin manifold such that $H^1(\Em^{4k-1},\mathbf{Z}/2\mathbf{Z})=0$ and all Pontryagin classes vanish, then $\MpRc(\Em^{4k-1})$ has infinitely many path components provided that $\Em^{4k-1}$ admits a socket metric. 
\end{customthm}

\subsection{Outline} Theorem \ref{A} follows from Theorem \ref{B} and our knowledge of manifolds that admit socket metrics developed in \cite{BLBOld,BLBThesis,BLBNew}. This is explained in the proof of Theorem \ref{A} at the end of Section \ref{pRcsums}. Similarly, Theorem \ref{ModuliList} follows from Theorem \ref{SocketModuli} and checking which of those manifolds listed in Theorem \ref{A} satisfy the topological conditions of \cite{KS}. This is explained in the proof of Theorem \ref{B} at the end of Section \ref{pRcsums}. 

The proofs of Theorem \ref{B} and \ref{SocketModuli} rely on same elementary observation as the proof of \cite[Theorem A]{WraithSphere}, which is that $\pRc(\Em^n)\subseteq \psc(\Em^n)$ and $\MpRc(\Em^n)\subseteq \Mpsc(\Em^n)$. In order to prove Theorems \ref{B} and \ref{SocketModuli}, it therefore suffices to construct a sequence of pRc metrics that are psc-isotopic to psc metrics that are known to lie in distinct path components of $\psc(\Em^n)$ and $\Mpsc(\Em^n)$ respectively. In Section \ref{pscSection} we review the topological invariants used to detect these path components, and in Section \ref{pRcSection} we construct these sequence of pRc metrics. 

In Section \ref{pscSection}, we sketch the proofs of the analogous versions of Theorems \ref{B} and \ref{SocketModuli} for psc metrics included below as Theorems \ref{SpinTheorem} and \ref{KSTheorem} respectively. In Section \ref{CarrSection}, we describe how the work of \cite{Carr} uses the $\hat{A}$-genus to detect path components of $\psc(\Em^{4k-1})$ in Theorem \ref{SpinTheorem}. In Section \ref{KSSection}, we describe how the work of \cite{KS} uses their $s$-invariant to detect path components of $\Mpsc(\Em^{4k-1})$ in Theorem \ref{KSTheorem}. We include enough detail for the reader to easily understand why the pRc metrics constructed in Section \ref{pRcSection} lie in distinct path components of $\psc(\Em^{4k-1})$ and $\Mpsc(\Em^{4k-1})$.

In Section \ref{pRcSection}, we carry out the construction of pRc metrics and ultimately prove Theorems \ref{B} and \ref{SocketModuli}. We begin in Section \ref{WraithSection} by explaining how the work of \cite{WraithSphere} was able to generalize the work of \cite{Carr} to pRc metrics. The main results of \cite{WraithSphere}, included as Theorems \ref{WraithSpaceTheorem} and \ref{WraithModuliTheorem}, only discuss the nontriviality of $\pRc(\Sphere^{4k-1})$ and $\MpRc(\Sphere^{4k-1})$. But a close reading of the details shows that the arguments applies equally well to any spin manifold admitting a particular family of pRc metrics, which we explain below in Theorems \ref{surgerytheorem} and \ref{SpinModuli}. We emphasize that Theorems \ref{B} and \ref{SocketModuli} are essentially a reframing of these two theorems in terms that are compatible with our previous work. In Section \ref{pRcsums}, we review concepts introduced in our previous work in \cite{BLBOld,BLBNew,BLBThesis}, specifically we discuss what exactly we mean by core and socket metrics. We also summarize and rephrase results needed for our application in this note. Finally in Section \ref{SurgeryCores} we make the connection between the work of \cite{WraithSphere} we summarized in Section \ref{WraithSection} and our own ideas presented in Section \ref{pRcsums} in the form of Lemma \ref{DiskLemma}, which immediately implies Theorems \ref{B} and \ref{SocketModuli}. Lemma \ref{DiskLemma} claims there is a family of metrics on the disk that simultaneously satisfy the metric hypotheses of Theorems \ref{surgerytheorem} and \ref{SpinModuli} and that satisfy the boundary conditions discussed in Section \ref{pRcsums} that allow them to be attached to any manifold admitting a socket metric. 

We end this note with Section \ref{AppsSection}, which gives two further applications of the techniques developed in this paper. In Section \ref{CoresSection} we discuss what is known about the topology of the space of all core metrics, present a conjecture about its structure, and end with Corollary \ref{CoreSpace}, which claims the space of core metrics will itself have infinitely many path components for a spin manifold of the appropriate dimension. And in Section \ref{ExoticSection} we remark that many of the pRc metrics constructed in this note will exist on exotic smooth structures. 


\section{The psc Story}\label{pscSection}



\subsection{The work of Carr}\label{CarrSection}


For a $4k$-dimensional Riemannian spin manifold $\Em^{4k}$ there is an invariant $\hat{A}(\Em^{4k}) \in \mathbf{Z}$ defined in terms of the index of the associated spinor Dirac operator. When $\Em^{4k}$ has positive scalar curvature, by \cite{Lich} the spinor Dirac operator is invertible and hence $\hat{A}(\Em^{4k})=0$. In \cite{AS}, it was shown that $\hat{A}(\Em^{4k})$ is computable in terms of the Pontryagin numbers of a closed spin manifold $\Em^{4k}$, and hence there is a topological obstruction to the existence of a psc metric on a closed spin manifold of dimension $4k$. 

The following observation is the key idea behind the main result in \cite{Carr}. 

\begin{obs}\label{Ahat} For a manifold $\Em^{4k-1}$ that bounds two spin manifolds $\Ex_1^{4k}$ and $\Ex_2^{4k}$, suppose that two psc metrics $g_1,g_2\in \psc(\Em^{4k-1})$ extend to two psc metrics $G_1$ on $\Ex_1^{4k}$ and $G_2$ on $\Ex_2^{4k}$ that each split as a product on a neighborhood of the boundary. Define the closed spin manifold $\Ex^{4k}=\Ex_1^{4k}\cup_{\Em^{4k-1}} \Ex_2^{4k}$. If $g_1$ and $g_2$ lie in the same path component of $\psc(\Em^{4k-1})$, then $\hat{A}(\Ex^{4k})=0$. 
\end{obs}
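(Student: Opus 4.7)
The plan is to convert the path hypothesis into a psc metric on a cylinder and then glue it into the closed manifold to invoke Lichnerowicz.

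First, I would use the psc-path $\{g_s\}_{s\in[0,1]}$ from $g_1$ to $g_2$ in $\psc(\Em^{4k-1})$ to build a psc metric $h$ on the cylinder $\Em^{4k-1}\times[0,1]$. The standard trick is that while the metric $g_s + ds^2$ need not have positive scalar curvature (the $s$-derivatives of $g_s$ contribute terms of indefinite sign to the scalar curvature), stretching in the $s$-direction suppresses these contributions. Concretely, one replaces $s$ by a reparameterization $s=\varphi(t)$ with $\varphi'$ very small and considers $g_{\varphi(t)} + dt^2$ on $\Em^{4k-1}\times[0,T]$ for some large $T$; since the scalar curvature of $g_{\varphi(t)}$ is uniformly positive along a compact path and the $t$-derivative terms are $O(\varphi')$, a sufficiently slow reparameterization yields positive scalar curvature. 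After a further smooth adjustment of $\varphi$ near the endpoints (so that $\varphi$ is locally constant there), $h$ becomes a genuine product $g_1+dt^2$ near $t=0$ and $g_2+dt^2$ near $t=T$.

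Next I would glue. Because $G_1$ and $G_2$ were assumed to split as products near their common boundary $\Em^{4k-1}$, both sides of the collar match the product structure of $h$ at $t=0$ and $t=T$ respectively. Gluing $G_1$, $h$, and $G_2$ along these collars produces a smooth Riemannian metric $G$ on the closed manifold
\[
\Ex^{4k} \;=\; \Ex_1^{4k} \cup_{\Em^{4k-1}} \bigl(\Em^{4k-1}\times[0,T]\bigr) \cup_{\Em^{4k-1}} \Ex_2^{4k},
\]
which is diffeomorphic to $\Ex_1^{4k}\cup_{\Em^{4k-1}} \Ex_2^{4k}$. By construction $G$ has positive scalar curvature on each piece, and since the transitions occur across collar neighborhoods on which the metrics already agree as products, $G$ is smooth and psc globally.

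Finally, $\Ex^{4k}$ is a closed spin manifold of dimension $4k$ equipped with a psc metric. By the Lichnerowicz formula \cite{Lich}, the spinor Dirac operator on $(\Ex^{4k},G)$ is invertible, so its index vanishes; by \cite{AS} this index equals $\hat{A}(\Ex^{4k})$, yielding $\hat{A}(\Ex^{4k})=0$. The main (essentially only) technical obstacle is the construction of the psc metric $h$ on the cylinder from the given path — i.e., verifying that a slow enough reparameterization produces positive scalar curvature and can be smoothed to a product at the ends — which is the classical psc-concordance argument and poses no real difficulty.
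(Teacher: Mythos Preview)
Your argument is correct and matches the paper's own proof essentially verbatim: the paper invokes \cite[Theorem]{Gaj} to obtain a psc concordance $H$ on $[0,1]\times \Em^{4k-1}$ from the path, glues it between $G_1$ and $G_2$, and concludes $\hat{A}(\Ex^{4k})=0$ via Lichnerowicz. The only difference is that you spell out the stretching construction behind Gajer's theorem rather than citing it as a black box.
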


The proof of Observation \ref{Ahat} is straightforward. Suppose that $g_1$ and $g_2$ are connected via a path in $\psc(\Em^{4k-1})$. Then by \cite[Theorem]{Gaj}, there is a psc metric $H$ on $[0,1]\times \Em^{4k-1}$ that can be smoothly glued together with $G_1$ and $G_2$ to form a psc metric on $\Ex^{4k}$. Hence $\hat{A}(\Ex^{4k})=0$. Note that we can use the contrapositive of Observation \ref{Ahat} to detect distinct path components of $\psc(\Em^{4k-1})$ whenever we have $\hat{A}(\Ex^{4k})\neq 0$. 

The simplest spin manifold we may hope to apply Observation \ref{Ahat} to is $\Sphere^{4k-1}$. For $k\ge 2$, in these dimensions there is never a unique smooth structure on the sphere, first observed in \cite{KM}. The set of diffeomorphism classes of smooth structures on $\Sphere^n$ is denoted as $\Theta_n$. It is an abelian group with addition being given by connected sum. An important subgroup is given by those smooth structures that exist as the boundary of a smooth parallelizable $(n+1)$-dimensional manifold, denoted by $bP_{n+1}$. In particular, $bP_{4k}$ is cyclic of order $b_k$ (the exact value of $b_k$ can be found following \cite[Corollary 7.6]{KM}), generated by an element $\Sigma^{4k-1}$ that bounds a parallelizable manifold $\Ee_{8}^{4k}$ with signature $\sigma(\Ee_8^{4k})=8$. Each element of $bP_{4k}$ can be written as $\#_p \Sigma^{4k-1}$ and hence bounds $\Ee_{8p}^{4k}:=\natural_p \Ee_8^{4k}$. 

Noting that $\Sphere^{4k-1}\cong \#_p \Sigma^{4k-1}$ whenever $p=qb_k$, we have an infinite sequence of distinct spin manifolds $\Ee_{8qb_k}^{4k}$ each with boundary diffeomorphic $\Sphere^{4k-1}$. It is natural to ask if we can apply Observation \ref{Ahat} to this scenario. We first need a technique for constructing metrics that satisfy the hypotheses of Observation \ref{Ahat}. 

\begin{theorem}\label{CarrMetric}\cite[Theorem 3]{Carr} Let $K$ is a codimension $k\ge 3$ subcomplex of a smooth triangulation of a Riemannian manifold $M$ and let $U\subseteq \Em^n$ be a normal neighborhood of $K$, then $U$ caries a metric of positive scalar curvature which is a product near the boundary. 
\end{theorem}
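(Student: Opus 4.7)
The plan is to induct on the dimension of the simplices of $K$, using the Gromov--Lawson surgery construction to extend the metric stratum by stratum. First I would subdivide so that $K$ is a subcomplex of a smooth triangulation of $M$, and view $U$ as built up one simplex at a time: thickening the $0$-simplices by small disks, then attaching thickened $1$-simplices, and so on. Because $\dim K \le n-k$, every $i$-simplex $\sigma^i$ of $K$ contributes a piece of the form $D^i \times D^{n-i}$ to $U$, where the normal disk has dimension $n-i \ge k \ge 3$.

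The base case is the $0$-skeleton of $K$: the normal neighborhood is a disjoint union of round $n$-balls, each of which carries the standard torpedo metric, positively curved and agreeing with a round cylinder $[0,\epsilon)\times\Sphere^{n-1}$ near the boundary (this requires only $n\ge k \ge 3$).

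For the inductive step, suppose the neighborhood $U_{i-1}$ of the $(i-1)$-skeleton $K^{(i-1)}$ carries a psc metric $g_{i-1}$ that is a product near $\partial U_{i-1}$. Passing to $U_i$ amounts, for each new $i$-simplex of $K$, to attaching an $i$-handle $D^i\times D^{n-i}$ along $\Sphere^{i-1}\times D^{n-i}\subseteq \partial U_{i-1}$. On the boundary this modification is precisely a surgery along the attaching sphere $\Sphere^{i-1}$, whose codimension inside $\partial U_{i-1}$ is $(n-1)-(i-1)=n-i\ge k \ge 3$. I would invoke the Gromov--Lawson surgery theorem in its trace form to produce a psc metric on the handle that coincides with the product form of $g_{i-1}$ near the attaching region and is again a product of a cylinder with a round sphere near the newly created portion of $\partial U_i$. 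Treating each handle individually, so the perturbations have pairwise disjoint support in $\partial U_{i-1}$, yields a psc metric $g_i$ on $U_i$ with product boundary; iterating up to $i=n-k$ produces the desired metric on $U$.

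The main technical obstacle is coordinating these handle attachments so the Gromov--Lawson modifications for neighboring simplices stay disjoint in $\partial U_{i-1}$ and so the cylindrical collar produced at each stage has uniform length before further handles are attached into it. Both points can be arranged by first shrinking the thickening parameter of the normal neighborhood to separate the attaching regions, and by trimming each collar to a fixed small height before the next surgery. Neither adjustment disturbs positivity of the scalar curvature, since that conclusion is local and is controlled by the curvature estimates internal to the Gromov--Lawson construction. Once the inductive procedure terminates at the top-dimensional simplices of $K$, the resulting metric on $U$ has positive scalar curvature and is a product near $\partial U$, as required.
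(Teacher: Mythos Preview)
The paper does not supply its own proof of this statement: it simply quotes \cite[Theorem 3]{Carr} and moves on. So there is nothing in the paper to compare your argument against beyond the citation itself.

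That said, your sketch is essentially the argument Carr gives. Building $U$ as a handlebody indexed by the simplices of $K$, observing that each $i$-handle has normal disk of dimension $n-i\ge k\ge 3$, and extending the psc metric over each handle via the Gromov--Lawson trace construction is exactly the mechanism behind \cite[Theorem 3]{Carr}. Your treatment of the technical points (separating attaching regions by shrinking the tubular radius, trimming collars to uniform length) is also standard and adequate at this level of detail. One small remark: you should make explicit that near the attaching region the product metric on the collar of $\partial U_{i-1}$ restricts to a metric in which the framed $\Sphere^{i-1}$ is already in the standard form the Gromov--Lawson bending requires; this is automatic after an isotopy because the collar metric is a product, but it is the one place where a reader might pause.
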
 

\noindent In order to understand how \ref{CarrMetric} applies to $\Ee_{8p}^{4k}$, we must briefly remark on their construction. Using a technique known as \emph{plumbing}, introduced by Milnor, $\Ee_{8p}^{4k}$ is built out of $\Disk^{2k}$-bundles over $\Sphere^{2k}$ (see \cite[Section V]{Browd} for a discussion) in such a way that it is obviously diffeomorphic to a normal neighborhood of its $(2k)$-skeleton. Hence for $k\ge 2$ we are able to make the following definition

\begin{definition}\label{CarrDefinition} For $k\ge 2$ and each $p\in \mathbf{Z}$ let $G_p^C$ denote the psc metric on $\Ee_{8p}^{4k}$ constructed in  Theorem \ref{CarrMetric}, and let $g_p^C$ denote its restriction to its boundary $\#_p \Sigma^{4k-1}$. 
\end{definition} 

\noindent With Definition \ref{CarrDefinition} in hand we may now explain how \cite{Carr} uses Observation \ref{Ahat} to prove the following. 

\begin{theorem}\label{CarrTheorem}\cite[Theorem 4]{Carr}\footnote{In actuality \cite[Theorem 4]{Carr} is only stated for the standard $\Sphere^{4k-1}$, but the argument is identical for each $\#_p\Sigma^{4k-1}$} Let $\Sigma^{4k-1}\in bP_{4k}$ be a generator, then $\psc(\#_p\Sigma^{4k-1})$ has infinitely many path components for each $p$. 
\end{theorem}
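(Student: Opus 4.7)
The plan is to apply Observation~\ref{Ahat} to the sequence of Carr metrics $\{g_{p+qb_k}^C\}_{q\in\mathbf{Z}}$. Since $qb_k\Sigma^{4k-1}\cong\Sphere^{4k-1}$ by the definition of $b_k$, every manifold $\#_{p+qb_k}\Sigma^{4k-1}$ is diffeomorphic to $\#_p\Sigma^{4k-1}$, so after pulling back by chosen diffeomorphisms $\phi_q \colon \#_p\Sigma^{4k-1} \to \#_{p+qb_k}\Sigma^{4k-1}$ I obtain a family $h_q := \phi_q^* g_{p+qb_k}^C \in \psc(\#_p\Sigma^{4k-1})$. It suffices to show that $h_q$ and $h_{q'}$ lie in distinct path components of $\psc(\#_p\Sigma^{4k-1})$ whenever $q \neq q'$.

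Suppose for contradiction that $h_q$ and $h_{q'}$ lie in a common path component for some $q \neq q'$. By Theorem~\ref{CarrMetric}, each $g_{p+qb_k}^C$ is the boundary restriction of a psc metric $G_{p+qb_k}^C$ on the parallelizable plumbing manifold $\Ee_{8(p+qb_k)}^{4k}$ that splits as a product near the boundary. Identifying both boundaries with $\#_p\Sigma^{4k-1}$ via $\phi_q$ and $\phi_{q'}$, Observation~\ref{Ahat} forces $\hat{A}(X) = 0$ for the closed spin manifold
\[
X^{4k} \;:=\; \Ee_{8(p+qb_k)}^{4k} \,\cup_{\#_p\Sigma^{4k-1}}\, \bigl(-\Ee_{8(p+q'b_k)}^{4k}\bigr).
\]

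The crux is to contradict this with a direct computation of $\hat{A}(X)$. Since both halves of $X$ are parallelizable, every Pontryagin class of $TX$ restricts trivially to each half; moreover, because $\#_p\Sigma^{4k-1}$ is a homotopy sphere, $H^{4i-1}(\#_p\Sigma^{4k-1})=0$ for all $1 \le i < k$, so the Mayer--Vietoris connecting map $H^{4i-1}(\partial) \to H^{4i}(X)$ vanishes in these degrees. Hence $p_i(X) = 0$ for $0 < i < k$, and both $\hat{A}(X)$ and the signature $\sigma(X)$ reduce to nonzero rational multiples of the single Pontryagin number $p_k(X)[X]$. In particular $\hat{A}(X)$ and $\sigma(X)$ vanish together, but by Novikov additivity
\[ \sigma(X) \;=\; 8(p+qb_k) - 8(p+q'b_k) \;=\; 8(q-q')b_k \;\neq\; 0, \]
so $\hat{A}(X) \neq 0$, the desired contradiction.

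The main technical ingredient is the proportionality of $\hat{A}$ and $\sigma$ on manifolds whose rational Pontryagin classes are concentrated in top degree, which follows from comparing $\hat{A}_k$ and $L_k$ evaluated at $(0,\dots,0,p_k)$; this is a classical calculation and in fact underlies the very definition of $b_k$ via the Milnor--Kervaire sequence. A minor compatibility point, that the spin structures on $\Ee_{8(p+qb_k)}$ and $-\Ee_{8(p+q'b_k)}$ match across $\#_p\Sigma^{4k-1}$, is automatic since a homotopy sphere carries a unique spin structure.
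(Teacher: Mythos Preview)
Your proof is correct and follows essentially the same approach as the paper's: both apply Observation~\ref{Ahat} to the family $\{g^C_{p+qb_k}\}_q$, extended over $\Ee_{8(p+qb_k)}^{4k}$, and contradict a hypothetical psc path by showing $\hat{A}$ of the glued manifold is nonzero. The only difference is that the paper delegates the computation of $\hat{A}(\Ex_{q,q'}^{4k})=c(q-q')$ to \cite[Theorem~12]{Carr}, whereas you spell it out via the Mayer--Vietoris vanishing of $p_i$ for $i<k$, the proportionality of $\hat{A}_k$ and $L_k$ on the monomial $p_k$, and Novikov additivity of the signature---which is exactly the content of that citation.
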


Note that $g_{p+qb_k}^C\in \psc(\#_p\Sigma^{4k-1})$ for each $q\in\mathbf{Z}$ and each extend to psc metrics $G_{p+qb_k}^C$ on $\Ee_{8(p+qb_k)}^{4k}$ as in Observation \ref{Ahat}. Let $q,q'$ be any two distinct integers. Define the closed spin manifold
$$\Ex_{q,q'}^{4k} = \Ee_{8(p+qb_k)}^{4k} \cup_{\#_p \Sigma^{4k-1}} \Ee_{8(p+q'b_k)}^{4k}.$$
By \cite[Theorem 12]{Carr}, we have $\hat{A}(\Ex_{q,q'}^{4k})=c(q-q')$ for some constant $c$. It follows from Observation \ref{Ahat} that $g_{p+qb_k}^C$ and $g_{p+q'b_k}^C$ lie in distinct path components for each $q\neq q'$, and hence $\psc(\#_p \Sigma^{4k-1})$ has infinitely many path components. 

Theorem \ref{CarrTheorem} was generalized to $\pRc(\#_p\Sigma^{4k-1})$, the space of all pRc metrics, in \cite{WraithSphere}, which we will review in Section \ref{WraithSection}. In Theorem \ref{B} we are claiming a generalization the following result. 

\begin{theorem}\cite[Theorem 4.2.2.2]{TW}\label{SpinTheorem} If $\Em^{4k-1}$ is a spin manifold that admits a psc metric, then $\psc(\Em^{4k-1})$ has infinitely many path components.
\end{theorem}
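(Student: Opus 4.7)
The plan is to extend Carr's argument (Theorem \ref{CarrTheorem}) from $\Sphere^{4k-1}$ to an arbitrary spin manifold $\Em^{4k-1}$ admitting a psc metric $g_0$. The main idea is to use the Gromov--Lawson psc connected sum to graft the Carr metrics of Definition \ref{CarrDefinition} onto $g_0$, producing an infinite family of psc metrics on $\Em^{4k-1}$ itself, and then to separate them in $\psc(\Em^{4k-1})$ by applying the $\hat{A}$-genus to carefully chosen closed spin $4k$-manifolds.

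For each $q \in \mathbf{Z}$, I would form $g_q := g_0 \# g_{qb_k}^C$ using the construction of \cite{GL}; this yields a psc metric on $\Em^{4k-1} \# \#_{qb_k}\Sigma^{4k-1}$. Because $b_k$ is the order of $\Sigma^{4k-1}$ in $bP_{4k}$, one has $\#_{qb_k}\Sigma^{4k-1} \cong \Sphere^{4k-1}$, so in fact $g_q \in \psc(\Em^{4k-1})$; these will be the candidate infinitely many path components. The same construction simultaneously produces a psc metric on the spin bordism
\[
W_q^{4k} := (\Em^{4k-1} \times [0,1]) \natural \Ee_{8qb_k}^{4k},
\]
obtained by boundary connect summing the cylinder with Carr's plumbing at a disk in $\Em \times \{1\}$: a product psc metric on the cylinder is joined to $G_{qb_k}^C$ along the sphere, yielding a psc metric that is a product near the two boundary components, carrying $g_0$ on one and $g_q$ on the other.

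Now suppose for contradiction that $g_q$ and $g_{q'}$ lie in the same path component of $\psc(\Em^{4k-1})$ for some $q \neq q'$. By Gajer's theorem \cite{Gaj}, the psc isotopy promotes to a psc metric on $\Em \times [0,1]$ restricting to $g_q$ and $g_{q'}$ on the two ends, each as a product. Inserting this isotopy cylinder between the $g_q$-end of $W_q$ and the $g_{q'}$-end of $W_{q'}$, and then identifying the remaining two $g_0$-ends via the identity, produces a closed spin manifold $\Ex^{4k}$ carrying a psc metric, so $\hat{A}(\Ex^{4k}) = 0$ by Lichnerowicz \cite{Lich}. Topologically, $\Ex^{4k}$ is diffeomorphic to $(\Em^{4k-1} \times \Sphere^1) \# P_{q,q'}$, where $P_{q,q'} := \Ee_{8qb_k}^{4k} \cup_{\Sphere^{4k-1}} \overline{\Ee_{8q'b_k}^{4k}}$ is the closed spin manifold obtained by gluing the two plumbings along their common sphere boundaries with opposite orientation. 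Since $\Em \times \Sphere^1$ bounds $\Em \times \Disk^2$ as a spin manifold, $\hat{A}(\Em^{4k-1} \times \Sphere^1) = 0$; and by Carr's computation \cite[Theorem 12]{Carr}, $\hat{A}(P_{q,q'}) = c(q-q')$ for some nonzero constant $c$. Additivity of $\hat{A}$ under connected sum then gives $\hat{A}(\Ex^{4k}) = c(q-q') \neq 0$, the desired contradiction.

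The step I expect to be the main obstacle is verifying the topological identification $\Ex^{4k} \cong (\Em \times \Sphere^1) \# P_{q,q'}$. One must carefully track how the two boundary connect sums inside $W_q$ and $W_{q'}$ interact with the two closing gluings, and in particular pin down the orientation reversal on $\Ee_{8q'b_k}$ that is responsible for the factor $(q-q')$ rather than $(q+q')$. Once this identification is in hand, the rest is routine: the Gromov--Lawson and Gajer gluings are standard constructions, and the nonvanishing of $\hat{A}(P_{q,q'})$ is exactly Carr's original computation transplanted to a slightly more elaborate bordism.
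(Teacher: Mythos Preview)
Your proposal is correct and follows essentially the same route as the paper's argument: form $g_q = h \#_{GL} g_{qb_k}^C$ via Theorem \ref{GL}, extend over $W_q^{4k} = ([0,1]\times\Em^{4k-1})\natural \Ee_{8qb_k}^{4k}$, insert a Gajer concordance between the $g_q$- and $g_{q'}$-ends, close up the remaining $h$-ends to obtain $(\Sphere^1\times\Em^{4k-1})\#\Ex_{q,q'}^{4k}$, and conclude via $\hat{A}$. The only cosmetic difference is that the paper deduces $\hat{A}(\Sphere^1\times\Em^{4k-1})=0$ from the existence of the psc metric $d\theta^2+h$ rather than from bounding $\Em^{4k-1}\times\Disk^2$; your worry about the topological identification is handled in the paper exactly as you outline, so there is no hidden obstacle there.
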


The proof relies on the psc connected sum operation of \cite[Theorem A]{GL} and the extension of this to boundary connected sums in \cite[Theorem]{Gaj}. 

\begin{theorem}\cite[Theorem]{Gaj}\label{GL} Let $h_1$ and $h_2$ be any psc metrics on $\Em^{n-1}_1$ and $\Em^{n-1}_2$ that extend to psc metrics $H_1$ on $\Ex^n_1$ and $H_2$ on $\Ex^n_2$ that are products on a neighborhood of the boundary, then there is a psc metric $h_1\#_{GL} h_2$ on $\Em_1^{n-1}\#\Em_2^{n-1}$ that extends to a psc metric $H_1\natural_{GL} H_2$ on $\Ex^n_1\natural \Ex_2^n$ that is a product on a neighborhood of the boundary. 
\end{theorem}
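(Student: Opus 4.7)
The proof is Gajer's adaptation of the Gromov--Lawson surgery technique to manifolds with boundary carrying a psc metric that is a product in a collar. The basic observation is that the boundary connected sum $\Ex_1 \natural \Ex_2$ is obtained from $\Ex_1 \sqcup \Ex_2$ by attaching a $1$-handle $\Disk^1 \times \Disk^{n-1}$ along two small $(n-1)$-disks in $\Em_1 \sqcup \Em_2 = \partial(\Ex_1 \sqcup \Ex_2)$. After this attachment the new boundary is $\Em_1 \# \Em_2$, the free face of the handle contributes a $\Disk^1 \times \Sphere^{n-2}$ neck to the new boundary, and the whole construction is a relative version of $\Sphere^0$-surgery of codimension $n-1 \ge 3$.

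My plan is first to reduce to a standard local model. Near chosen points $p_i \in \Em_i$, the collar hypothesis says $H_i$ looks like $h_i|_{\text{ball}} + dt^2$ on a half-ball neighborhood of $p_i$. A conformal perturbation supported in these half-balls (of the type used in the proof of Theorem \ref{CarrMetric}) can be chosen to make the metric arbitrarily close to a flat Euclidean half-ball while preserving positivity of the scalar curvature and preserving the collar structure. The problem is thereby reduced to constructing a single psc metric on the model $1$-handle $\Disk^1 \times \Disk^{n-1}$ that matches the flat half-ball metric on the two attaching disks $\{\pm 1\} \times \Disk^{n-1}$ and is a product on a collar of the free face $\Disk^1 \times \Sphere^{n-2}$.

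The core step is the explicit construction of this handle metric. I would use a doubly warped ansatz: on $\Disk^1 \times \Disk^{n-1}$ with coordinates $(s, r, \omega)$ where $r \in [0,1]$ is radial in $\Disk^{n-1}$ and $\omega \in \Sphere^{n-2}$, take a metric of the form $ds^2 + dr^2 + f(s,r)^2 \, g_{\Sphere^{n-2}}$. The warping profile $f$ must interpolate between $f(s,r) \sim r$ near the attaching disks (so it matches the flat model smoothly) and $f$ independent of $r$ for $r$ near $1$ (so the free face is collared by a product). The Gromov--Lawson computation shows that for suitable choices of $f$ the positive scalar curvature of the $\Sphere^{n-2}$-factor, amplified when $f$ is small, dominates the negative terms produced by the $s$- and $r$-derivatives of $f$, precisely because $n - 1 \ge 3$. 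Gluing the resulting psc handle metric to $\Ex_1 \sqcup \Ex_2$ produces $H_1 \natural_{GL} H_2$, and its restriction to the free face defines $h_1 \#_{GL} h_2$.

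The main obstacle is engineering a single profile $f$ that simultaneously achieves smooth matching with the flat half-balls on the attaching disks, the boundary-product condition on a collar of the free face, and pointwise positivity of the scalar curvature throughout the handle. Each requirement can be met in isolation by standard techniques, but combining them requires a careful balance between the warping in $s$ and in $r$, and this is the technical heart of Gajer's argument. Once such an $f$ is produced the theorem follows by direct verification, since smoothness of the glued metric, the collared boundary product, and positivity of scalar curvature are all local properties already secured by the construction.
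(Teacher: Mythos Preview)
The paper does not contain a proof of this statement: Theorem~\ref{GL} is quoted verbatim as \cite[Theorem]{Gaj} and used as a black box in the discussion of Theorems~\ref{SpinTheorem} and~\ref{KSTheorem}. There is therefore nothing in the paper to compare your proposal against.

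That said, your sketch is a reasonable outline of the Gajer argument. A few small remarks. Your appeal to ``the proof of Theorem~\ref{CarrMetric}'' for the conformal flattening step does not quite land, since Theorem~\ref{CarrMetric} is also only cited, not proven, in this paper; you would need to point directly to the Gromov--Lawson bending construction. More substantively, the handle model you describe is the right shape, but the actual Gajer construction does not quite proceed via a single doubly-warped profile $f(s,r)$ on the handle: one first performs the Gromov--Lawson neck construction on the boundary to obtain $h_1 \#_{GL} h_2$, and then one must check that this deformation, which is supported in small balls in $\Em_i$, can be carried across the collar $[0,\varepsilon) \times \Em_i$ while keeping the metric a product near the new boundary and keeping scalar curvature positive throughout. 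Framing it as ``build the handle metric directly'' is morally equivalent but obscures the step where the product-near-boundary condition is recovered, which is exactly the content Gajer adds on top of \cite{GL}. Your final paragraph correctly identifies this as the delicate point.
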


With Theorem \ref{GL} in hand, we can now explain how to prove Theorem \ref{SpinTheorem}. It uses an argument similar to Observation \ref{Ahat}, but modified as we cannot assume that $\Em^{4k-1}$ is a spin boundary. Let $h$ denote the hypothesized psc metric on $\Em^{4k-1}$ and let $H=dt^2+h$ on $[0,1]\times \Em^{4k-1}$, then by Theorem \ref{GL} there is a psc metric $H\natural_{GL} G_{qb_k}^C$ on $ \Ex_{q}^{4k} =  \left( [0,1]\times \Em^{4k-1}\right) \natural \Ee_{8(qb_k)}^{4k}$ that is isometric to $h\#_{GL}g_{qb_k}^C$ on $\{1\}\times \Em^ {4k-1}$ and isometric to $h$ on $\{0\}\times \Em^{4k-1}$. If $h\#_{GL}g_{qb_k}^C$ and $h\#_{GL} g_{q'b_k}^C$ were connected via a path of psc metrics, then by \cite[Theorem]{Gaj} there is a psc metric $K$ on $[0,1]\times \Em^{4k-1}$ that could be glued to $\Ex_q^{4k}$ with the metric $H\natural_{GL} G_{qb_k}^C$ on $\{1\}\times \Em^{4k-1}$ at one end and to $\Ex_{q'}^{4k}$ $H\natural_{GL} G_{q'b_k}^C$ on $\{1\}\times \Em^{4k-1}$ at the other end. The result is a psc metric on $ \left([0,1]\times \Em^{4k-1}\right) \# \Ex_{q,q'}^{4k}$ that restricted to either boundary is isometric to $h$. By identifying the two ends we have a psc metric on $\left(\Sphere^1\times \Em^{4k-1}\right)\# \Ex_{q,q'}^{4k}$. As the $\hat{A}$-genus is spin-bordism invariant we have
$$ \hat{A}\left( \left(\Sphere^1\times \Em^{4k-1} \right) \# W_{q,q'}^{4k}\right)=\hat{A}(\Sphere^1\times \Em^{4k-1})+ \hat{A}(W_{q,q'}^{4k})= c(q-q').$$
Where we have used the fact that $\hat{A}(\Sphere^1\times \Em^{4k-1})=0$ because $d\theta^2+h$ is a psc metric on $\Sphere^1\times \Em^{4k-1}$. We conclude that $h\#_{GL}g_{qb_k}^C$ and $h\#_{GL}g_{q'b_k}^C$ must lie in distinct path components, and hence $\psc(\Em^{4k-1})$ has infinitely many path components.


\subsection{The work of Kreck and Stolz}\label{KSSection}


One of the principal techniques in probing the moduli space $\Mpsc(\Em^{4k-1})$, pioneered in \cite{KS}, is the use of a particular analytically defined invariant $s(\Em^{4k-1},g) \in \mathbf{Q}$ defined for a psc metric $g$ on a spin manifold $\Em^{4k-1}$ with vanishing Pontryagin classes. The absolute value of this invariant detects whether two psc metrics lie in the same connected components of the orbit space of $\psc(\Em^{4k-1})$ under the action of \emph{spin} isometries. Under the assumption of a unique spin structure (that $H^1(\Em^{4k-1},\mathbf{Z}/2\mathbf{Z})=0$), $|s(\Em^{4k-1},g)|$ is a legitimate invariant of the connected components of $\Mpsc(\Em^{4k-1})$. Furthermore, when $\Em^{4k-1}$ is the boundary of a spin manifold $\Ex^{4k}$ and the psc metric extends to a psc metric $G$ on $\Ex^{4k}$ with product boundary, $s(\Em^{4k-1},g)$ can be computed in terms of the topology of $\Ex^{4k}$ \cite[Proposition 2.13]{KS}. Putting all of this together, the authors of \cite{KS} were able to prove the following about $\Mpsc(\Em^{4k-1})$. 

\begin{theorem}\cite[Corollary 2.15]{KS}\label{KSTheorem} If $\Em^{4k-1}$ is a spin manifold such that $H^1(\Em^{4k-1},\mathbf{Z}/2\mathbf{Z})=0$ and all Pontryagin classes vanish, then $\Mpsc(\Em^{4k-1})$ has infinitely many path components provided it is nonempty. 
\end{theorem}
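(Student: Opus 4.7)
The plan is to mirror the proof of Theorem \ref{SpinTheorem}, replacing the bordism-invariance argument for the $\hat{A}$-genus with a direct computation of the $s$-invariant on a carefully chosen sequence of psc metrics. Fix any $h \in \psc(\Em^{4k-1})$, which exists by hypothesis. For each integer $q$, form the Gajer boundary connected sum $h_q := h \#_{GL} g_{qb_k}^C$ using Theorem \ref{GL} together with the Carr metric of Definition \ref{CarrDefinition}. Since $\#_{qb_k}\Sigma^{4k-1} \cong \Sphere^{4k-1}$, each $h_q$ lies in $\psc(\Em^{4k-1})$, and $h_q$ extends to a psc metric with product boundary on the spin manifold
\[ W_q := \bigl([0,1] \times \Em^{4k-1}\bigr) \natural \Ee_{8qb_k}^{4k}. \]

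Next I would apply \cite[Proposition 2.13]{KS} to compute $s(\Em^{4k-1}, h_q)$ from the topology of $W_q$. The KS formula expresses $s$ as a rational linear combination of the signature and certain Pontryagin numbers of the bounding manifold, each of which is additive under boundary connected sum. The $[0,1]\times \Em^{4k-1}$-contribution is independent of $q$ (and equals $s(\Em^{4k-1}, h)$), while the $\Ee_{8qb_k}^{4k}$-contribution scales linearly in $q$ because $\Ee_{8qb_k}^{4k} = \natural_{qb_k}\Ee_8^{4k}$. Hence
\[ s(\Em^{4k-1}, h_q) = s(\Em^{4k-1}, h) + q\, \alpha_k \]
for a constant $\alpha_k$ depending only on $k$. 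The hypotheses $H^1(\Em^{4k-1}, \mathbf{Z}/2\mathbf{Z}) = 0$ and the vanishing of all Pontryagin classes of $\Em^{4k-1}$ ensure that $|s(\Em^{4k-1}, \cdot)|$ descends to a well-defined invariant of the components of $\Mpsc(\Em^{4k-1})$. Therefore, as soon as $\alpha_k \neq 0$, the values $|s(\Em^{4k-1}, h_q)|$ form an unbounded sequence and the $h_q$ must represent infinitely many distinct moduli classes.

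The main obstacle is verifying $\alpha_k \neq 0$. This is precisely the computation carried out in \cite{KS} to show that the Carr metrics on $\Sphere^{4k-1}$ occupy infinitely many components of $\Mpsc(\Sphere^{4k-1})$: applied to the building block $\Ee_8^{4k}$, the signature contributes the nonzero quantity $8$ to the KS formula, and, because $\Ee_8^{4k}$ is stably parallelizable, the Pontryagin-number correction terms vanish and cannot cancel the signature contribution. Once $\alpha_k \neq 0$ is in hand, the remainder of the proof is a routine additivity check for characteristic numbers under $\natural$, formally identical to the role played by spin-bordism invariance of $\hat{A}$ in the proof of Theorem \ref{SpinTheorem}.
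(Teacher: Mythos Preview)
Your proposal is correct and follows essentially the same approach as the paper's sketch: both use the metrics $h\#_{GL} g_{qb_k}^C$, compute their $s$-invariants via \cite[Proposition 2.13]{KS}, and obtain the affine formula $s(h) + cq$ with $c\neq 0$ coming from the signature of $\Ee_{8}^{4k}$. The only cosmetic difference is that the paper invokes part (iv) of that proposition (additivity of $s$ under the Gromov--Lawson connected sum) to split off $s(\Sphere^{4k-1}, g_{qb_k}^C)$ and then computes this term via the bounding $\Ee_{8qb_k}^{4k}$, whereas you go directly through the bounding manifold $W_q = ([0,1]\times\Em^{4k-1})\natural \Ee_{8qb_k}^{4k}$ and use additivity of signature and Pontryagin numbers under $\natural$; since $W_q$ has \emph{two} boundary components, what \cite[Proposition 2.13]{KS} actually yields in your setup is $s(h_q)-s(h) = t(W_q)$ (cf.\ the proof of Theorem \ref{SpinModuli}), but this leads to the same formula and the same conclusion.
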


The proof of Theorem \ref{KSTheorem} relies on computing the value of $|s|$ for the metrics $h\#_{GL} g_{qb_k}^C$ that lie in distinct path components of $\psc(\Em^{4k-1})$ as described in the proof outlined for Theorem \ref{SpinTheorem}. By \cite[Proposition 2.13 (iv)]{KS}, $s(\Em^{4k-1},h\#_{GL}g_{qb_k}^C) = s(\Em^{4k-1},h)+s(\Sphere^{4k-1},g_{qb_k}^C)$, and hence it suffices to show that $|s(\Sphere^{4k-1}, g_{qb_k}^C)|\neq |s(\Sphere^{4k-1},g_{q'b_k}^C)|$ for $q\neq \pm q'$. By \cite[Proposition 2.13]{KS} this reduces to a topological computation on $\Ee_{8qb_k}^{4k}$, which shows that $|s(\Sphere^{4k-1},g_{qb_k}^C)|= c q$ for some constant $c$. 


\section{The pRc Story}\label{pRcSection}



\subsection{The work of Wraith}\label{WraithSection}


In \cite{WraithSphere}, the author's previous constructions of pRc metrics on exotic spheres in \cite{WraithExotic} are used to prove the following generalization of Theorem \ref{CarrTheorem}. 

\begin{theorem}\cite[Theorem A]{WraithSphere}\label{WraithSpaceTheorem} Let $\Sigma^{4k-1}\in bP_{4k}$ be a generator, then $\pRc(\#_p \Sigma^{4k-1})$ has infinitely many path components. 
\end{theorem}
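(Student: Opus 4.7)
The approach mirrors Carr's argument (Theorem \ref{CarrTheorem}) and exploits the tautological inclusion $\pRc(\#_p\Sigma^{4k-1}) \subseteq \psc(\#_p\Sigma^{4k-1})$. The plan is to produce, for each integer $q$ and for the fixed $p$, a pRc metric on $\#_{p+qb_k}\Sigma^{4k-1} \cong \#_p\Sigma^{4k-1}$ that lies in the same path component of $\psc(\#_p\Sigma^{4k-1})$ as Carr's metric $g_{p+qb_k}^C$ from Definition \ref{CarrDefinition}. Once this psc-isotopy is in hand, the distinct-components statement for pRc is immediate: the inclusion of spaces sends a path component of $\pRc$ into a path component of $\psc$, and Carr's computation (via Observation \ref{Ahat} applied to the closed spin manifold $\Ex_{q,q'}^{4k}$, where $\hat{A}(\Ex_{q,q'}^{4k}) = c(q-q') \neq 0$ for $q \neq q'$) already shows the $g_{p+qb_k}^C$ to lie in pairwise distinct path components of $\psc(\#_p\Sigma^{4k-1})$.

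First I would construct the pRc metrics. The plumbing manifold $\Ee_{8(p+qb_k)}^{4k}$ is built by successively plumbing $\Disk^{2k}$-bundles over $\Sphere^{2k}$, so its boundary $\#_{p+qb_k}\Sigma^{4k-1}$ is obtained from $\Sphere^{4k-1}$ by a finite sequence of surgeries along embedded spheres of codimension $2k \geq 4$. Applying the pRc surgery theorem of \cite{WraithSurgery} (the pRc analogue of the Gromov--Lawson surgery theorem) to the round metric on $\Sphere^{4k-1}$ yields a pRc metric $g_{p+qb_k}^W$ on $\#_{p+qb_k}\Sigma^{4k-1}$. This is precisely the family of metrics constructed in \cite{WraithExotic}.

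The main obstacle is the second step: showing that $g_{p+qb_k}^W$ is psc-isotopic to $g_{p+qb_k}^C$. Both metrics are attached to the same topological model (the plumbing $\Ee_{8(p+qb_k)}^{4k}$), but they arise by genuinely different procedures: Wraith's metric is built surgery-by-surgery starting from the round sphere, while Carr's metric comes from the Gromov--Lawson normal-neighborhood construction of Theorem \ref{CarrMetric} applied to the $2k$-skeleton of $\Ee_{8(p+qb_k)}^{4k}$. The strategy is to compare the two constructions within the larger, more flexible space of psc metrics: because each elementary surgery occurs along an embedded sphere of codimension $\geq 3$ and the surgery loci can be arranged to be disjoint, one can deform the Wraith trace of each surgery through psc metrics into the corresponding Carr-type local model, and then patch the resulting local psc deformations into a single continuous path in $\psc(\#_p\Sigma^{4k-1})$ from $g_{p+qb_k}^W$ to $g_{p+qb_k}^C$.

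With these two steps in place, the conclusion is immediate. The sequence $\{g_{p+qb_k}^W\}_{q\in\mathbf{Z}}$ consists of pRc metrics on $\#_p\Sigma^{4k-1}$ which are pairwise non-isotopic as psc metrics, and hence \emph{a fortiori} pairwise non-isotopic as pRc metrics, so $\pRc(\#_p\Sigma^{4k-1})$ has infinitely many path components.
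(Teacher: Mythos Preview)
Your outline is correct and matches the paper's own description of Wraith's argument: construct the pRc metrics $g_{p+qb_k}^W$ via the pRc surgery of \cite{WraithSurgery,WraithExotic}, then use the inclusion $\pRc\subseteq\psc$ together with a psc-isotopy to Carr-type metrics so that Observation~\ref{Ahat} applies. One small refinement worth noting: what Wraith actually establishes (and what the paper records as Theorem~\ref{WraithMetric}) is not literally a psc-isotopy to Carr's specific metrics $g_{p+qb_k}^C$, but rather a psc-isotopy to \emph{some} metric $g_{p+qb_k,\infty}^W$ that extends to a psc metric on $\Ee_{8(p+qb_k)}^{4k}$ which is a product near the boundary---this weaker statement is all that Observation~\ref{Ahat} requires, and is what ``metrics similar to those in Definition~\ref{CarrDefinition}'' means in the paper's sketch.
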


The idea behind the proof of Theorem \ref{WraithSpaceTheorem} is a construction of pRc metric on $\#_p \Sigma^{4k-1} =\#_{p+qb_k} \Sigma^{4k-1}$ for each $q$. While the construction of these metrics does not extend easily to metrics on $\Ee_{8(p+qb_k)}^{4k}$ with positive scalar curvature, they are constructed out of building blocks that respect this topology. It is therefore reasonable to suspect these metrics lie in distinct path components of $\pRc(\#_p\Sigma^{4k-1})$ for each $q$. As $\pRc(\Em^n)\subseteq \psc(\Em^n)$, it suffices to show that these metrics lie in distinct path components of $\psc(\#_p\Sigma^{4k-1})$. The proof of Theorem \ref{WraithSpaceTheorem} is by carefully showing that the pRc metrics are connected via a path in $\psc(\#_p\Sigma^{4k-1})$ to metrics similar to those in Definition \ref{CarrDefinition}. Hence the argument outlined above for Theorem \ref{CarrTheorem} also implies Theorem \ref{WraithSpaceTheorem}.

For details of the metric construction we recommend reading \cite{WraithSurgery,WraithExotic,WraithSphere}. We summarize the results we need in the following theorem. 

\begin{theorem}\cite[Proposition 5.5]{WraithSphere}\label{WraithMetric} For $k\ge 2$, suppose that $\Em^{4k-1}$ admits a pRc metric $h$ such that there is a nullhomotopic isometric embedding $\iota:\Sphere^{2k-1}_\rho\times \Disk^{2k}_R(N)\hookrightarrow \Em^{4k-1}$. For all $p$, if $\rho< \kappa(k,p,N,R)$ it is possible to perform iterated surgeries on $\iota$ to produce a pRc metric $h\#_W g_p^W$ on $\Em^{4k-1}\# \left(\#_p \Sigma^{4k-1}\right)$. 

Suppose moreover, there is a psc metric $H$ on $W^{4k}$ that splits as a product metric $dt^2+h$ near the boundary $\partial W^{4k}=\Em^{4k-1}$. Then the metric $h\#_W g_p^W$ is psc isotopic to a metric $h\#_Wg_{p,\infty}^W$ that extends to a psc metric $H\natural_W G_{p,\infty}^W$ on $W^{4k}\natural \Ee_{8p}^{4k}$ that splits as a  product metric $dt^2+ h\#_W g_{p,\infty}^W$ near the boundary $\Em^{4k-1}\# \left(\#_p \Sigma^{4k-1}\right)$. 
\end{theorem}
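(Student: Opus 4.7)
The plan is to realize the topology of $\Em^{4k-1}\#(\#_p\Sigma^{4k-1})$ via an iterated sequence of Wraith pRc surgeries, and then combine a psc isotopy with Carr's Theorem \ref{CarrMetric} and the Gromov--Lawson--Gajer boundary connected sum of Theorem \ref{GL} to obtain the extension claimed in the second statement.

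For the first statement, I would begin by recalling the plumbing description of $\Ee_{8p}^{4k}$: it is assembled from boundary-connected sums of $\Ee_8^{4k}$, each of which is built from eight $\Disk^{2k}$-bundles over $\Sphere^{2k}$ plumbed according to the $E_8$ diagram. This expresses $\partial(W^{4k}\natural\Ee_{8p}^{4k})\cong\Em^{4k-1}\#(\#_p\Sigma^{4k-1})$ as the result of a sequence of surgeries on $\Sphere^{2k-1}\times\Disk^{2k}$ factors, starting from $\Em^{4k-1}$. The nullhomotopic isometric embedding $\iota$ supplies the standard round neighborhood required by Wraith's pRc surgery theorem \cite{WraithSurgery} for the first surgery; iterating the construction $8p$ times produces the desired pRc metric $h\#_W g_p^W$, provided each stage retains a standard tube of sufficient radius for the next step. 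The function $\kappa(k,p,N,R)$ is defined precisely to quantify how $\rho$ must be shrunk at the outset so that the parameters survive $p$ iterations. In my view this quantitative propagation of tube data is the main technical obstacle, as Wraith's surgery controls principal curvatures only within a narrow range and those bounds degrade additively under iteration.

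For the second statement, I would start from the fact that \textbf{positive scalar} curvature is far more flexible than positive Ricci, and look to deform $h\#_W g_p^W$ within $\psc$ until it becomes the restriction to the boundary of an explicit psc fill of $W^{4k}\natural\Ee_{8p}^{4k}$. Concretely, because the manifold $\Ee_{8p}^{4k}$ retracts onto its $2k$-skeleton, which is a codimension $2k\ge 4$ subcomplex, Theorem \ref{CarrMetric} provides a psc metric $G_{p,\infty}^W$ on $\Ee_{8p}^{4k}$ that is a product near the boundary and whose boundary restriction $g_{p,\infty}^W$ agrees up to psc isotopy with the (restriction of the) metric built by the surgery construction. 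Gluing $H$ and $G_{p,\infty}^W$ via the boundary connected sum operation of Theorem \ref{GL} produces a psc metric $H\natural_W G_{p,\infty}^W$ on $W^{4k}\natural\Ee_{8p}^{4k}$ with product collar, whose boundary restriction is $h\#_W g_{p,\infty}^W$.

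The remaining task, and what I regard as the secondary technical point, is to verify the psc isotopy between $h\#_W g_p^W$ and $h\#_W g_{p,\infty}^W$. Here I would follow the torpedo-style interpolation described in \cite{WraithExotic,WraithSphere}: the Wraith surgery metric is constructed from explicit model pieces (standard round, torpedo, and gluing regions), and each such piece admits a one-parameter deformation through psc metrics to the corresponding Carr-type piece while keeping the boundary collar a product. Piecing these local deformations together using cutoff functions gives a global psc path in $\psc(\Em^{4k-1}\#(\#_p\Sigma^{4k-1}))$ connecting the two metrics, with positivity of scalar curvature preserved thanks to the $2k\ge 4$ codimension estimates that underlie Carr's construction.
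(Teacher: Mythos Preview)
Your proposal misses the key topological role of the nullhomotopy hypothesis. You treat it as merely supplying ``the standard round neighborhood required by Wraith's pRc surgery theorem,'' but roundness of the neighborhood is already encoded in the isometric embedding $\iota:\Sphere^{2k-1}_\rho\times\Disk^{2k}_R(N)\hookrightarrow\Em^{4k-1}$; the nullhomotopy is a separate, purely topological assumption. The paper uses it to identify the result of the \emph{first} framed surgery on $\iota$ as the connected sum $\Em^{4k-1}\#\Sphere(T\Sphere^{2k})$: because $\iota$ is nullhomotopic it factors through an embedded disk $\Disk^{4k-1}\hookrightarrow\Em^{4k-1}$, and the elementary identity
\[
\Disk^{4k-1}\setminus\bigl(\Sphere^{2k-1}\times\Disk^{2k}\bigr)\;=\;\bigl(\Disk^{2k}\times\Sphere^{2k-1}\bigr)\setminus\Disk^{4k-1}
\]
then shows that the surgered manifold is $\Em^{4k-1}\#\Sphere(T\Sphere^{2k})$. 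Without this step there is no reason the iterated surgeries produce a \emph{connected sum} with $\#_p\Sigma^{4k-1}$ rather than some other manifold built from $\Em^{4k-1}$ by surgeries. Once this identification is in place, the remaining surgeries are performed on a fiber sphere of the $\Sphere(T\Sphere^{2k})$ summand exactly as in Wraith's construction of $\#_p\Sigma^{4k-1}$, and the pRc metric follows from the discussion after \cite[Theorem~3.3]{WraithSphere}.

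For the second statement, your route through Theorem~\ref{GL} is not what the paper (or \cite{WraithSphere}) does, and the paper is explicit about why: immediately before Theorem~\ref{SpinModuli} it remarks that showing $h\#_W g^W_{qb_k}$ is psc isotopic to a metric extending over the Gromov--Lawson boundary connected sum $\bigl([0,1]\times\Em^{4k-1}\bigr)\natural\bigl([0,1]\times\Sphere^{4k-1}\bigr)$ ``is not claimed in \cite{WraithSphere}.'' Rather, \cite[Proposition~5.5]{WraithSphere} constructs the psc metric on $W^{4k}\natural\Ee_{8p}^{4k}$ by extending each pRc surgery on the boundary to a handle attachment on $W^{4k}$ and then running the torpedo deformation handle-by-handle; the paper simply cites this. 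Your sketch of the torpedo interpolation is in the right spirit, but assembling it via Theorem~\ref{GL} rather than via handle attachments introduces exactly the comparison step the paper deliberately avoids.
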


\begin{proof} As explained in \cite[Section 2]{WraithSphere}, it is possible to construct any $\#_p \Sigma^{4k-1}$ by performing iterated surgeries on $\Sphere^{2k-1}\times \Disk^{2k}$ starting with a fiber sphere of $\Sphere(T\Sphere^{2k})$. Note that
 $$\Sphere(T\Sphere^{2k}) = \left(\Disk^{2k}\times \Sphere^{2k-1}\right) \cup_F \left(\Disk^{2k}\times \Sphere^{2k-1}\right),$$
 where $F:\Sphere^{2k-1}\times \Sphere^{2k-1} \rightarrow \Sphere^{2k-1}\times \Sphere^{2k-1}$ is of the form $F(x,y)=(x,f(x)\cdot y)$ for $f\in \pi_{2k-1}(SO(2k))$.  We claim that it is possible to take a connected sum with $\Sphere(T\Sphere^{2k})$ by performing framed surgery on $\iota$. 
\begin{equation}\label{1surgery} \left[\Em^n\setminus \iota \left( \Sphere^{2k-1}\times \Disk^{2k} \right)\right] \cup_{\iota\circ F}\left[ \Disk^{2k}\times \Sphere^{2k-1}\right] = \Em^n \# \Sphere(T\Sphere^{2k}) .\end{equation}
This follows from the fact that a nullhomotopic embedding factors through an embedding $\Disk^n\hookrightarrow \Em^n$ and the following elementary observation:
$$ \Disk^n \setminus \left(\Sphere^{2k-1}\times \Disk^{2k}\right) = \left(\Disk^{2k}\times \Sphere^{2k-1}\right) \setminus  \Disk^n .$$

We can now construct $\Em^{4k-1}\#\left( \#_p \Sigma^{4k-1}\right)$ by performing iterated surgeries on the fiber sphere of $\Sphere(T\Sphere^{2k})$ on the right-hand side of (\ref{1surgery}). If we fix $p$, the discussion following  \cite[Theorem 3.3]{WraithSphere} explains that $\Em^n\# \left(\#_p\Sigma^n\right)$ can be endowed with a pRc metric if $\rho$ is chosen sufficiently small depending on $R$, $N$, $k$, and $p$. This establishes the first claim. The second claim follows from \cite[Proposition 5.5]{WraithSphere}. 
\end{proof}

\noindent While the second paragraph of Theorem \ref{WraithMetric} follows directly from the work of \cite[Proposition 5.5]{WraithSphere}, it is important to note that the proof of this fact occupies the bulk of the paper. 

It is worth noting Theorem \ref{WraithMetric} combined with the logic of Theorem \ref{SpinTheorem} goes a great deal further than what is explicitly claimed in \cite{WraithSphere}. 

\begin{theorem}\label{surgerytheorem} For $k\ge 2$, suppose for a fixed $R,N>0$ and for any $\rho>0$ that $\Em^{4k-1}$ admits a of pRc metrics $h$ and a nullhomotopic isometric embedding $\iota:\Sphere^{2k-1}_\rho\times \Disk^{2k}_R(N)\hookrightarrow \Em^n $. If $\Em^{4k-1}$ is spin, then $\pRc(\Em^{4k-1})$ has infinitely many path components. 
\end{theorem}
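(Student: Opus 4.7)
The plan is to mimic the Gajer-style argument in the proof of Theorem~\ref{SpinTheorem}, with Wraith's pRc surgery (Theorem~\ref{WraithMetric}) in place of the Gromov--Lawson psc connected sum. The decisive feature of Theorem~\ref{WraithMetric} is that each pRc surgery output is psc-isotopic to a metric that extends as a psc metric over a Carr-type filling, which allows the $\hat{A}$-obstruction to be imported essentially unchanged.

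For each nonnegative integer $q$ I would first choose $\rho_q>0$ smaller than the constant $\kap(k,qb_k,N,R)$ and use the hypothesis to obtain a pRc metric $h$ on $\Em^{4k-1}$ together with the embedding $\iota_{\rho_q}:\Sphere^{2k-1}_{\rho_q}\times\Disk^{2k}_R(N)\hookrightarrow\Em^{4k-1}$. Since $\#_{qb_k}\Sigma^{4k-1}\cong\Sphere^{4k-1}$, the first paragraph of Theorem~\ref{WraithMetric} applied with $p=qb_k$ produces a pRc metric $h_q:=h\#_W g_{qb_k}^W\in\pRc(\Em^{4k-1})$. Taking $W^{4k}=[0,1]\times\Em^{4k-1}$ with $H=dt^2+h$ (psc with product boundary), the second paragraph of Theorem~\ref{WraithMetric} yields a psc-isotopy from $h_q$ to $h_{q,\infty}\in\psc(\Em^{4k-1})$ that extends to a psc metric $\tilde{H}_q$ on $W\natural\Ee_{8qb_k}^{4k}$ of product form near both boundary components (one copy of $\Em^{4k-1}$ unchanged with metric $h$, the other modified and carrying $h_{q,\infty}$). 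Since $\pRc\subseteq\psc$ and the psc-isotopy preserves path components, it then suffices to show that $h_{q,\infty}$ and $h_{q',\infty}$ lie in distinct path components of $\psc(\Em^{4k-1})$ whenever $q\neq q'$.

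Assume for contradiction that a psc path connects them. By \cite{Gaj}, there is a psc metric $K$ on $[0,1]\times\Em^{4k-1}$ with product boundary realizing these. Glue the extension $\tilde{H}_q$, the cylinder $([0,1]\times\Em^{4k-1},K)$, and the orientation-reversed extension $\tilde{H}_{q'}$ along their matching boundaries $(\Em^{4k-1},h_{q,\infty})$ and $(\Em^{4k-1},h_{q',\infty})$, then identify the two remaining $(\Em^{4k-1},h)$ boundary components via the identity to form a closed spin manifold $\mathcal{N}_{q,q'}$ carrying a smooth psc metric. Arguing as in Theorem~\ref{SpinTheorem}, $\mathcal{N}_{q,q'}\cong(\Sphere^1\times\Em^{4k-1})\#\Ex_{q,q'}^{4k}$, where $\Ex_{q,q'}^{4k}=\Ee_{8qb_k}^{4k}\cup_{\Sphere^{4k-1}}\overline{\Ee_{8q'b_k}^{4k}}$. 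Then spin-bordism invariance of $\hat{A}$, the psc metric $d\theta^2+h$ on $\Sphere^1\times\Em^{4k-1}$, and the Carr computation give
$$\hat{A}(\mathcal{N}_{q,q'})=\hat{A}(\Sphere^1\times\Em^{4k-1})+\hat{A}(\Ex_{q,q'}^{4k})=0+c(q-q')\neq 0,$$
contradicting the Lichnerowicz vanishing forced by the psc metric on $\mathcal{N}_{q,q'}$.

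The main technical obstacle is reconciling the dependence on $\rho$: since Theorem~\ref{WraithMetric} forces $\rho_q\to 0$ as $q\to\infty$, one really needs a single pRc metric $h$ on $\Em^{4k-1}$ that admits a nullhomotopic isometric embedding $\iota_\rho$ for every sufficiently small $\rho$, for otherwise the two outer boundaries of the glued cobordism carry distinct metrics $h_{\rho_q}$ and $h_{\rho_{q'}}$ and cannot be directly identified to close off the manifold. This is exactly the socket condition envisioned in Theorem~\ref{B}, and the hypothesis of the present theorem should be read in this spirit; once a single $h$ is fixed, the rest of the argument runs parallel to Theorem~\ref{SpinTheorem}.
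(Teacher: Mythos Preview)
Your overall strategy is correct and matches the paper's: use Theorem~\ref{WraithMetric} with $W^{4k}=[0,1]\times\Em^{4k-1}$ to get psc extensions over $W^{4k}\natural\Ee_{8qb_k}^{4k}$, then run the Gajer/Carr argument from Theorem~\ref{SpinTheorem}. You also correctly identify the one genuine issue: if the base metric $h$ varies with $\rho$ (which the hypothesis allows), then the two ``outer'' boundaries of the glued cobordism carry different metrics and cannot be closed up.

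Where you go wrong is in your proposed resolution. You suggest the hypothesis should be read as demanding a single $h$ working for all $\rho$; that is strictly stronger than what is stated, and is not how the paper proceeds. The paper's fix is a simple cardinality trick that you missed: rather than producing an infinite family of pRc metrics all at once, fix an integer $m>0$, take $\rho<\kap(k,m,N,R)$, and use the hypothesis to get \emph{one} metric $h$ for this $\rho$. Theorem~\ref{WraithMetric} then yields pRc metrics $h\#_W g_{qb_k}^W$ for \emph{all} $|qb_k|\le m$ simultaneously, all sharing the same base $h$. For this finite collection your gluing argument now works verbatim, giving at least $\lfloor m/b_k\rfloor$ distinct path components. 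Since $m$ was arbitrary, $\pRc(\Em^{4k-1})$ has infinitely many path components. This avoids any need to strengthen the hypothesis.
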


\begin{proof} For a fixed $m>0$, there is a $\kappa(k,m,R,N)$ as in Theorem \ref{WraithMetric}, such that, if $\rho < \kappa$ we may find a pRc metric $h \#_W g_{qb_k}^W$ on $\Em^{4k-1}$ for all $|qb_k|\le m$. We claim that these metrics each lie in distinct path components. This shows that $\pRc(\Em^{4k-1})$ has more than $\lfloor m/b_k\rfloor$ path components for each $m$, and hence has an infinite number of path components. 

Setting $W^{4k} = [0,1]\times \Em^{4k-1}$ with $H= dt^2+h$, we may apply the second claim of Theorem \ref{WraithMetric} to find a psc metric $H\natural_W G_{p,\infty}^W$ on $W^{4k}\natural \Ee_{8(qb_k)}^{4k}$ that is a product near the boundary. The argument now proceeds identically to the proof of Theorem \ref{SpinTheorem} outlined above to show that $h\#_W g_{qb_k,\infty}^W$ lie in distinct path components of $\psc(\Em^{4k-1})$ for each $|q|\le \lfloor m/b_k\rfloor$. 
\end{proof}

In order to prove Theorem \ref{WraithSpaceTheorem} using Theorem \ref{WraithMetric}, it suffices to construct a metric on $\Sphere^{4k-1}$ that satisfies the hypotheses \emph{for a fixed} $R$ and $N$ and \emph{for all choices of }$\rho>0$. This can be achieved as follows. If we take the Riemannian product $\Sphere_\rho^{2k}\times \Sphere_1^{2k-1}$, there are two disjoint embeddings embeddings $\iota, \iota': \Sphere_\rho^{2k} \times \Disk_1^{2k-1}(\pi/4)\hookrightarrow \Sphere_\rho^{2k}\times \Sphere_1^{2k-1}$. By \cite[Lemma]{SY}, you can perform surgery on $\iota'$ to produce a pRc metric on $\Sphere^{2k-1}$ with an isometric embedding $\iota: \Sphere_\rho^{2k} \times \Disk_1^{2k-1}(\pi/4) \hookrightarrow \Sphere^{4k-1}$. This procedure can be repeated for any $\rho>0$, hence the second statement of Theorem \ref{WraithMetric} now implies Theorem \ref{WraithSpaceTheorem} as outlined above. While this is \emph{not} how Theorem \ref{WraithSpaceTheorem} is proven in \cite{WraithSphere}, it is illustrative of the perspective we wish to take in the proof of Lemma \ref{DiskLemma} in Section \ref{SurgeryCores}. 

While Theorem \ref{surgerytheorem} is essentially obvious from reading \cite{WraithSphere}, the main issue with it is that there are very few obvious candidates for $\Em^n$ that satisfy the hypotheses. The following Proposition is an incomplete list of those manifolds already known to in the literature to admit a family of metrics as in Theorem \ref{surgerytheorem}.

\begin{prop}\label{list} For $n=4k-1>3$, $\pRc(\Em^n)$ has infinitely many path components, where $\Em^n$ is any of the following
\begin{enumerate}
\item\label{product}\cite{SY,Ehr} Where $\Em^n$ is the result of performing surgery on the fiber of a linear $\Sphere^{2k-1}$-bundle over $\Ex^{2k}$, where $\Ex^{2k}$ is a spin manifold admitting a metric of pRc
\item\cite{WraithExotic} $\Em^n\in bP_{4k}$, 
\item\cite{CW} $\Em^n\#\Sigma^n$, where $\Em^n$ is any $(2k-2)$-connected, $(2k-1)$-parallelizable manifold and $\Sigma^n$ is some $\Sigma^n\in \Theta_n$. 
\end{enumerate}
\end{prop}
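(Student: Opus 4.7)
The plan is to verify, for each of the three families of manifolds, that the hypotheses of Theorem \ref{surgerytheorem} are met: one must exhibit a pRc metric on $\Em^n$ with a nullhomotopic isometric embedding $\iota: \Sphere^{2k-1}_\rho \times \Disk^{2k}_R(N) \hookrightarrow \Em^n$ for some $R, N$ independent of $\rho$ and for every $\rho > 0$, and confirm that $\Em^n$ is spin. In each of the three cases the relevant pRc construction already appears in the literature, so the task reduces to isolating, inside that construction, an undisturbed small round fiber together with a normal disk neighborhood of uniformly bounded geometry.

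For case (\ref{product}), I would take the Vilms-type Riemannian submersion metric on a linear $\Sphere^{2k-1}$-bundle $\pi: \Ee \to \Ex^{2k}$ with totally geodesic round fibers of radius $\rho$ over the pRc base. For $\rho$ small the total space carries pRc, and performing Sha--Yang surgery on a single fiber produces the listed $\Em^n$. A second disjoint fiber retains a normal neighborhood isometric to $\Sphere^{2k-1}_\rho \times \Disk^{2k}_R(N)$, with $R$ and $N$ determined only by the base geometry and therefore held fixed as $\rho$ varies. Spinness of $\Em^n$ follows from the spin hypothesis on $\Ex^{2k}$ together with spinness of the vector bundle whose sphere bundle we started from. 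For case (2), Wraith's construction of pRc metrics on each $\#_p\Sigma^{4k-1} \in bP_{4k}$ proceeds by iterated surgery beginning from $\Sphere(T\Sphere^{2k})$ equipped with its standard submersion metric; the fiber $\Sphere^{2k-1}_\rho$ may be scaled arbitrarily small at the outset, and by design the surgeries preserve the isometric embedding of an unsurgered fiber with its tubular neighborhood. Elements of $bP_{4k}$ are spin since they bound parallelizable manifolds. Case (3) is handled analogously using the construction of \cite{CW}: the $(2k-1)$-parallelizability of $\Em^n$ implies spinness, $\Sigma^n \in \Theta_n$ is spin, and the Crowley--Wraith procedure is modeled on the same Sha--Yang/Wraith surgery framework, which again leaves a small fiber isometrically embedded.

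The main obstacle is verifying that the fiber radius $\rho$ can be sent to zero for fixed $R$ and $N$, a uniformity statement that is not recorded verbatim in the cited references. I expect this to follow from inspecting the curvature inequalities in \cite{SY, Ehr, WraithSurgery, WraithExotic, CW} and noting that the pRc conditions governing each surgery step are preserved, and in fact improve, as the fiber shrinks while the normal-disk parameters are held constant. Once this uniform family is extracted in each of the three cases, Theorem \ref{surgerytheorem} immediately yields the infinitely many path components in $\pRc(\Em^n)$ and the proposition follows.
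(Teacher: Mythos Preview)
Your proposal is correct and follows the same approach as the paper: verify the metric hypotheses of Theorem \ref{surgerytheorem} by noting that each listed manifold is built from linear $\Sphere^{2k-1}$-bundles and hence carries, at least locally, a family of pRc submersion metrics with arbitrarily small round fibers. The paper's own treatment is in fact much terser than yours---it is a single paragraph after the proposition observing that all three families are locally modeled on linear $\Sphere^{2k-1}$-bundles over $\Sphere^{2k}$, and invoking \cite[Proposition 9.70]{Besse} for the fact that shrinking the fiber drives the Ricci curvature toward that of the product; your case-by-case discussion is a reasonable expansion of that remark.
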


Note that each of the manifolds listed in Proposition \ref{list} are built in such a way that they are locally equivalent to a linear $\Sphere^{2k-1}$-bundle over $\Sphere^{2k}$. It is the basic fact that these bundles may be equipped with metrics locally isometric to a product, and that by shrinking the spherical fiber the Ricci curvatures converge to the corresponding Ricci curvatures in the product metric \cite[Proposition 9.70]{Besse}. Hence they can all be equipped, at least locally, with a family of pRc metrics
 as in Theorem \ref{surgerytheorem}. 

The list in Proposition \ref{list} is already quite impressive, particularly (\ref{product}) which only requires a spin manifold of dimension $2k$ that admits a metric of pRc. That said, the way that Theorem \ref{surgerytheorem} is phrased makes it difficult to apply to a manifold that we do not already know something of the global topology. That is unless we know upfront that $\Em^n$ can be constructed out of sphere bundles. Our main contribution to this subject, which we will outline in the next section, is that instead of knowing global topology of $\Em^n$ we know something of the global geometry of $\Em^n$.

Note that $\Em^{4k-1}=\#_p \Sigma^{4k-1}$ itself satisfies the topological hypotheses of Theorem \ref{KSTheorem}, hence $\Mpsc(\Sigma^{4k-1})$ has infinitely many path components. It is reasonable to ask if the metrics of pRc of Theorem \ref{WraithSpaceTheorem} lie in distinct path components of $\MpRc(\Sigma^{4k-1})$. 

\begin{theorem}\cite[Theorem A]{WraithSphere}\label{WraithModuliTheorem} For $k\ge 2$, let $\Sigma^{4k-1}\in bP_{4k}$ be a generator, then the moduli space $\MpRc(\#_p\Sigma^{4k-1})$ has infinitely many path components.
\end{theorem}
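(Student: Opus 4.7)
The plan is to mirror the passage from Theorem \ref{SpinTheorem} to Theorem \ref{KSTheorem}, but starting from the pRc metrics underlying Theorem \ref{WraithSpaceTheorem} rather than the psc metrics $g_{qb_k}^C$. Since $\MpRc(\#_p\Sigma^{4k-1}) \subseteq \Mpsc(\#_p\Sigma^{4k-1})$, it suffices to exhibit a sequence of pRc metrics lying in infinitely many distinct path components of $\Mpsc(\#_p\Sigma^{4k-1})$. The key topological observation is that $\#_p\Sigma^{4k-1}$ is a homotopy sphere, so $H^1(\#_p\Sigma^{4k-1};\mathbf{Z}/2\mathbf{Z})=0$ and all Pontryagin classes vanish; hence by \cite{KS} the absolute value $|s|$ of the Kreck--Stolz invariant is a well-defined invariant of path components of $\Mpsc(\#_p\Sigma^{4k-1})$.

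Next, I would construct the pRc metrics using the approach from the paragraph following Theorem \ref{surgerytheorem}: for each $\rho>0$, one obtains a pRc metric $h_\rho$ on $\Sphere^{4k-1}$ equipped with a nullhomotopic isometric embedding $\iota:\Sphere^{2k-1}_\rho\times \Disk^{2k}_{1}(\pi/4)\hookrightarrow \Sphere^{4k-1}$. Taking $W^{4k}=\Disk^{4k}$ with a suitable psc metric extending $h_\rho$ as a product near the boundary, for each $q\in\mathbf{Z}$ with $\rho<\kappa(k,p+qb_k,1,\pi/4)$ the two parts of Theorem \ref{WraithMetric} produce a pRc metric $h_\rho\#_W g_{p+qb_k}^W$ on $\Sphere^{4k-1}\#\bigl(\#_{p+qb_k}\Sigma^{4k-1}\bigr)\cong \#_p\Sigma^{4k-1}$ together with a psc-isotopy to a metric that bounds a psc metric on $\Disk^{4k}\natural\Ee_{8(p+qb_k)}^{4k}$ with product boundary.

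Since $|s|$ is psc-isotopy invariant and, by \cite[Proposition 2.13]{KS}, computable from the topology of a bounding spin manifold carrying a psc extension with product boundary, the additivity under boundary connected sum \cite[Proposition 2.13(iv)]{KS} combined with the explicit computation used in the proof of Theorem \ref{KSTheorem} gives
\[
\bigl|s\bigl(\#_p\Sigma^{4k-1},\, h_\rho\#_W g_{p+qb_k}^W\bigr)\bigr| = \bigl|c(p+qb_k)+c'\bigr|
\]
for some nonzero universal constant $c$ and a constant $c'$ depending only on $(\Disk^{4k},h_\rho)$. As $q$ ranges over $\mathbf{Z}$, this expression takes each value at most twice, so these pRc metrics populate infinitely many path components of $\Mpsc(\#_p\Sigma^{4k-1})$, hence of $\MpRc(\#_p\Sigma^{4k-1})$.

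The main obstacle is ensuring that the ambient disk $\Disk^{4k}$ can be equipped with a psc extension of $h_\rho$ that is compatible with Wraith's construction (so that the second conclusion of Theorem \ref{WraithMetric} applies), and then verifying that the additivity of $s$ under $\natural$ really does isolate the linear contribution from $\Ee_{8(p+qb_k)}^{4k}$ without having it cancelled by the $\Disk^{4k}$ summand. Once the bounding topology is pinned down, the growth of $|s|$ follows verbatim from the Carr-style computation recalled at the end of Section \ref{KSSection}, and the proof concludes.
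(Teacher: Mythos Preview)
Your approach is essentially the same as the paper's, but you have introduced an unnecessary detour that creates the very ``main obstacle'' you worry about. The paper applies \cite[Proposition~2.13]{KS} \emph{directly} to the bounding manifold $\Ee_{8(p+qb_k)}^{4k}$: Wraith's pRc metrics $g_{p+qb_k}^W$ on $\#_p\Sigma^{4k-1}$ are already psc-isotopic to metrics $g_{(p+qb_k),\infty}^W$ that extend as products over $\Ee_{8(p+qb_k)}^{4k}$ itself (this is the content of \cite[Proposition~5.5]{WraithSphere} in the base case), and since $\Ee_{8(p+qb_k)}^{4k}$ is parallelizable with signature $8(p+qb_k)$, one reads off $|s|=c|p+qb_k|$ immediately. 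There is no need to factor through $W^{4k}=\Disk^{4k}$, no need for a psc filling of $h_\rho$, and no need to invoke the additivity statement \cite[Proposition~2.13(iv)]{KS}.

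Your version still works, but only because the extra structure is vacuous: $\Disk^{4k}\natural \Ee_{8(p+qb_k)}^{4k}\cong \Ee_{8(p+qb_k)}^{4k}$, so your constant $c'$ is zero and the computation collapses to the paper's. Note also that as written you let $\rho$ depend on $q$ (``for each $q$ with $\rho<\kappa(\dots)$''), which would make $c'=c'(h_\rho)$ vary with $q$ and break the counting; you should instead fix $\rho$ small enough for a given range $|q|\le m$ as in the proof of Theorem~\ref{surgerytheorem} and then let $m\to\infty$. Once you strip away the $\Disk^{4k}$ layer, both of these concerns disappear and you recover exactly the paper's one-line argument.
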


The proof of Theorem \ref{WraithModuliTheorem} is almost immediate from the proof of Theorem \ref{KSTheorem} and Theorem \ref{WraithSpaceTheorem} because \cite[Proposition 2.13]{KS} works equally well for \emph{any} psc metric on $\#_p \Sigma^{4k-1}$ that extends to a psc metric on $\Ee_{8(p+qb_k)}^{4k}$ that is a product near the boundary. We immediately see that $|s(\#_p\Sigma^{4k-1},g_{(p+qb_k),\infty}^W|=c(p+qb_k)$, and hence $g_{(p+qb_k),\infty}^W$ lie in distinct path components of $\MpRc(\#_p\Sigma^{4k-1})$ for all $q\neq \pm q'$. Here $g_{p,\infty}^W$ are those metrics constructed in Theorem \ref{WraithMetric} that are psc isotopic to the pRc metric $g_{(p+qb_k)}^W$. Hence $g_{(p+qb_k)}^W$ lie in distinct path components of $\MpRc(\#_p\Sigma^{4k-1})$ for $q\neq \pm q'$. 

As Theorem \ref{KSTheorem} is true for any spin manifold satisfying the topological hypotheses, it is reasonable to ask if the same result will hold for $\MpRc(\Em^{4k-1})$ for any spin manifolds satisfying both the topological conditions of Theorem \ref{KSTheorem} and the metric conditions of Theorem \ref{surgerytheorem}. There is only one slight subtlety that prevents the argument of Theorem \ref{KSTheorem} from applying automatically, and that is that \cite[Proposition 2.13 (iv)]{KS} relies on \cite[Theorem]{Gaj} to construct a psc metric on the manifold 
$$X^{4k}= \left([0,1]\times \Em^{4k-1} \right) \natural \left([0,1]\times \Sphere^{4k-1}\right),$$
that is isometric to $h\sqcup g_{qb_k}^C$ at one end and $h\#_{GL} g_{qb_k}^C$ at the other end. While we suspect that the pRc metric $h\#_W g_{qb_k}^W$ constructed in Theorem \ref{WraithMetric} is psc isotopic to a metric that can be extended over $X^{4k}$, this is not claimed in \cite{WraithSphere} and proving this would require us to retread all of the fine details of \cite{WraithSphere}. One way to side-step this issue is to instead consider the psc metric $H\natural_W G_{qb_k,\infty}^W$ constructed in Theorem \ref{WraithMetric} on the manifold
$$W^{4k}_q = \left([0,1]\times \Em^{4k-1}\right) \natural \left( \Ee_{8(qb_k)}^{4k}\right).$$

\begin{theorem}\label{SpinModuli} For $k\ge 2$, let $\Em^{4k-1}$ be a spin manifold satisfying the topological conditions of Theorem \ref{KSTheorem} and the metric conditions of Theorem \ref{surgerytheorem}. Then $\MpRc(\Em^{4k-1})$ has infinitely many path components. 
\end{theorem}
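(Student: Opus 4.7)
The plan is to run the Kreck--Stolz argument of Theorem \ref{KSTheorem} with the bordism $W_q^{4k}=\left([0,1]\times\Em^{4k-1}\right)\natural\Ee_{8(qb_k)}^{4k}$ replacing the cylinder-plus-$\Ex_{q,q'}$ construction used in the proof of Theorem \ref{SpinTheorem}, thereby avoiding the psc gluing issue flagged in the paragraph preceding the statement. For each integer $q$ with $|qb_k|\le m$ and $\rho<\kappa(k,|qb_k|,N,R)$ small enough, I apply Theorem \ref{WraithMetric} to the pRc metric $h$ and the embedding $\iota$ to obtain a pRc metric $h\#_W g_{qb_k}^W$ on $\Em^{4k-1}\#\left(\#_{qb_k}\Sigma^{4k-1}\right)\cong\Em^{4k-1}$. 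The second paragraph of the same theorem then provides a psc isotopy from $h\#_W g_{qb_k}^W$ to a metric $h\#_W g_{qb_k,\infty}^W$ which extends to a product-boundary psc metric $H\natural_W G_{qb_k,\infty}^W$ on $W_q^{4k}$, where $H=dt^2+h$.

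To distinguish the moduli classes, I would apply \cite[Proposition 2.13]{KS} to the spin bordism $W_q^{4k}$, whose two boundary components carry $h$ and $h\#_W g_{qb_k,\infty}^W$ respectively. The proposition expresses the difference $s(\Em^{4k-1},h\#_W g_{qb_k,\infty}^W)-s(\Em^{4k-1},h)$ as a linear combination of relative characteristic numbers of $W_q^{4k}$. Since $[0,1]\times\Em^{4k-1}$ is a trivial spin bordism whose contribution vanishes on account of the assumed vanishing of the Pontryagin classes of $\Em^{4k-1}$, and since characteristic numbers behave additively under boundary connected sum, the difference reduces to the contribution of $\Ee_{8(qb_k)}^{4k}$. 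This is precisely the computation carried out for the sphere in the proof of Theorem \ref{KSTheorem} and yields $cq$ for a nonzero constant $c=c(k)$. Under the hypothesis $H^1(\Em^{4k-1},\mathbf{Z}/2\mathbf{Z})=0$ the spin structure is unique, so $|s|$ descends to an invariant of path components of $\Mpsc(\Em^{4k-1})$, and the metrics $\{h\#_W g_{qb_k,\infty}^W\}_q$ realize infinitely many distinct components there.

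Finally, because a psc isotopy is a path in $\psc(\Em^{4k-1})$ and the diffeomorphism action on $\pRc(\Em^{4k-1})$ is the restriction of the one on $\psc(\Em^{4k-1})$, the metrics $h\#_W g_{qb_k}^W$ represent the same components of $\Mpsc(\Em^{4k-1})$ as the isotopic $h\#_W g_{qb_k,\infty}^W$; via the inclusion $\MpRc(\Em^{4k-1})\subseteq\Mpsc(\Em^{4k-1})$ they witness infinitely many components of $\MpRc(\Em^{4k-1})$. I expect the principal obstacle to be verifying rigorously that \cite[Proposition 2.13]{KS} applies to the two-boundary bordism $W_q^{4k}$ rather than to an extension over a single bounding manifold, and more specifically confirming that the cylindrical factor contributes nothing under the boundary connected sum while the $\Ee_{8(qb_k)}^{4k}$ summand contributes exactly what it does in the $\#_{qb_k}\Sigma^{4k-1}$ case of Theorem \ref{WraithModuliTheorem}. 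Granting this, the linear growth $|s|=c|q|$ follows and the remainder is bookkeeping inherited from Theorems \ref{KSTheorem} and \ref{WraithModuliTheorem}.
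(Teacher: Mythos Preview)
Your proposal is correct and follows essentially the same route as the paper. The paper resolves exactly the obstacle you flag at the end: it invokes \cite[Remark~2.2(ii) and Proposition~2.13(iii)]{KS} to interpret $s(\Em^{4k-1},h\#_W g_{qb_k,\infty}^W)-s(\Em^{4k-1},h)$ as $t(W_q^{4k})$ for the two-boundary bordism, and then verifies directly that \emph{all} Pontryagin classes of $W_q^{4k}$ vanish (using the Whitney sum formula on the cylinder factor, parallelizability of $\Ee_{8qb_k}^{4k}$, and additivity under $\natural$), so that the formula for $t$ collapses to $c\,\sigma(W_q^{4k})=8cqb_k$.
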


\begin{proof} As we have already pointed out, there is pRc metric $h\#_Wg_{qb_k}^W$ on $\Em^{4k-1}$ that is psc isotopic to a metric $h\#_Wg_{qb_k,\infty}^W$ that extends across the manifold $W_q^{4k}$. We claim that $|s(\Em^{4k-1},h\#_W g_{qb_k}^W) |= |s(\Em^{4k-1},h\#_W g_{qb_k,\infty}^W)|$ is distinct for every $q\in \mathbf{N}$. Because $W_q^{4k}$ admits a psc metric, \cite[Remark 2.2 (ii) \& Proposition 2.13 (iii)]{KS} implies that
$$ s(\Em^{4k-1},h\#_W g_{qb_k,\infty}^W) - s(\Em^{4k-1},h) = s(\Em^{4k-1}\sqcup (-\Em^{4k-1}), (h\#_W g_{qb_k,\infty}^W ) \sqcup h) = t(W_q^{4k}),$$
where a formula for $t(W_q^{4k})$ is found in \cite[Equation (2.11)]{KS}. 

We claim that the Pontryagin classes of $W_q^{4k}$ all vanish. First note that $\Ee_{8qb_k}^{4k}$ is parallelizable by construction, and hence $p_i(\Ee_{8qb_k}^{4k})=0$. Note that $T([0,1]\times \Em^{4k-1})= \varepsilon^1\oplus \pi^*T\Em^{4k-1}$, where $\varepsilon^1$ is the trivial line bundle and $\pi:[0,1]\times \Em^{4k-1}\rightarrow \Em^{4k-1}$ is the projection. Using the integral Whitney sum formula for Pontryagin classes \cite[Theorem 1.6]{Brown} we see that 
$$p_i\left(\varepsilon^1\oplus \pi^*T\Em^{4k-1}\right)=p_i(\pi^*T\Em^{4k-1})=\pi^*p_i(\Em^{4k-1})=0.$$
Finally we note that $p_i(X\natural Y) = p_i(X)+p_i(Y)$ for any two manifolds with boundary, and hence $p_i(W_q^{4k})=0$. 

Because all the Pontyragin classes of $W_q^{4k}$ vanish, \cite[Equation (2.11)]{KS} reduces to
$$t(W_p^{4k}) = c\sigma(W_q^{4k}),$$
for some constant $c$. Note that $\sigma(W_q^{4k}) = \sigma(\Ee_{8qb_k}^{4k}) = 8qb_k$. We conclude that 
$$|s(\Em^{4k-1},h\#_W g_{qb_k,\infty}^W)|= |s(\Em^{4k-1},h) + cq|,$$
and hence are distinct for each $q\in \mathbf{N}$. 
\end{proof}


\subsection{The author's previous work}\label{pRcsums}


The main goal of the author's previous papers \cite{BLBOld,BLBNew} and dissertation \cite{BLBThesis} was to construct new examples of pRc connected sums. While \cite{GL} implies that the connected sum of any two psc Riemannian manifolds has a metric of positive scalar curvature, such a result is not possible for pRc metrics. While no general result is possible, there have been many examples of pRc connected sums constructed in the literature \cite{Chee,SY2,SY,SY3,Per1,WraithExotic,WraithNew,SW,CW}. Our original goal in writing \cite{BLBOld} was generalizing the result in \cite{Per1}, which is a construction of pRc metric on $\#_k\CP^2$, to the connected sum of any projective spaces. There were two techniques presented in \cite{Per1} that reduce the problem to constructing a pRc metric on $\CP^2\setminus \Disk^4$ with round and strictly convex boundary. 

The first technique of \cite{Per1} is a gluing theorem for pRc metrics. It replaces the requirement that pRc metrics be standard on a collar (as is the case in the study of psc metrics), with an infinitesimal collar condition in terms of the second fundamental form. 

\begin{theorem}[\cite{Per1}]\label{GluingTheorem} Given two pRc Riemannian manifolds $(\Em_i^n,g_i)$ with an orientation reversing isometry $\Phi:\partial \Em^n_1 \rightarrow \partial \Em_2^n$ that satisfies $\2_1+\Phi^*\2_2$, there is a pRc metric $g$ on $\Em_1^n\cup_\Phi \Em_2^n$ that agrees with the $g_i$ outside of an arbitrarily small neighborhood of the gluing site. 
\end{theorem}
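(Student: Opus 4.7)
The plan is to construct $g$ by carefully smoothing the naive $C^0$-gluing in a thin collar of the common boundary $\Sigma:=\partial\Em_1^n\cong \partial\Em_2^n$, and to exploit the hypothesis $\2_1+\Phi^*\2_2>0$ to preserve positivity of $\Ric$ under the smoothing.

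First I would introduce Fermi collar coordinates near each $\partial\Em_i^n$ and identify a neighbourhood of the common boundary with $(-\varepsilon,\varepsilon)_t\times \Sigma$, writing the naive glued metric as the warped metric
\begin{equation*}
 g \;=\; dt^2+g_t, \qquad g_t := \begin{cases} (g_1)_t & t\ge 0,\\ \Phi^*(g_2)_{-t} & t\le 0,\end{cases}
\end{equation*}
where $(g_i)_t$ is the induced metric on the equidistant hypersurface at signed distance $t$ in $\Em_i^n$. Because $\Phi$ is an isometry, $g_t$ is continuous in $t$, but $\partial_tg_t$ has a jump at $t=0$: with the sign convention $\2=-\tfrac12 \partial_tg_t|_{t=0}$, the jump equals $-2(\2_1+\Phi^*\2_2)$, which by hypothesis is negative definite on $\Sigma$.

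Next I would define a smooth family $\tilde g_t^\delta$ by mollifying $g_t$ in $t$ on a scale $\delta\ll\varepsilon$, obtaining the smooth metric $\tilde g=dt^2+\tilde g_t^\delta$ which agrees with $g_i$ outside the shell $|t|<\delta$. Writing $A_t=-\tfrac12 g_t^{-1}\partial_t g_t$ for the shape operator of the level sets, the Ricci tensor of such a warped metric splits as
\begin{align*}
\Ric(\partial_t,\partial_t) &= -\partial_t\mathrm{tr}(A_t)-|A_t|^2,\\
\Ric(\partial_t,X) &= -\mathrm{div}_{g_t}A_t(X)+d\,\mathrm{tr}(A_t)(X),\\
\Ric(X,Y) &= \Ric^{g_t}(X,Y)-\partial_t(A_t)_{XY}-\mathrm{tr}(A_t)(A_t)_{XY}+2((A_t)^2)_{XY}.
\end{align*}
The decisive observation is that smoothing converts the negative definite jump of $A_t$ into a smooth decrease of magnitude $\sim\delta^{-1}$ with definite sign; this contributes to $\Ric(\partial_t,\partial_t)$ via $-\partial_t\mathrm{tr}(A_t)$ with a large positive coefficient, and contributes to $\Ric(X,Y)$ via $-\partial_t(A_t)_{XY}$ with a positive symmetric tensor, precisely because the hypothesis $\2_1+\Phi^*\2_2>0$ is an inequality of symmetric $2$-tensors on $\Sigma$ and not merely of their traces. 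The bounded terms $|A_t|^2$, $\mathrm{tr}(A_t)(A_t)_{XY}$ and the intrinsic $\Ric^{g_t}$ are harmless, since shrinking $\varepsilon$ makes $\Ric^{g_i}$ uniformly positive on the collars while the $A_t^2$-type terms remain $O(1)$ and are swallowed by the $O(\delta^{-1})$ spikes.

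The main obstacle is the off-diagonal block $\Ric(\partial_t,X)$, which involves $\mathrm{div}_{g_t}A_t$ and is also formally $O(\delta^{-1})$. Positivity of $\Ric(\tilde g)$ as a symmetric bilinear form requires that the positive diagonal $\delta^{-1}$ spikes dominate this off-diagonal $\delta^{-1}$ term. This is the technical heart of the proof: one must choose the mollifier symmetrically in $t$ and arrange that, in a suitable trivialisation of the normal bundle of $\Sigma$, the leading divergence contribution is of lower order than the radial spike $-\partial_t\mathrm{tr}(A_t)$, or else precede the smoothing by a tangential perturbation on $\Sigma$ to kill it. Once this quantitative matching is achieved, sending $\delta\to 0$ localises the whole modification to an arbitrarily thin neighbourhood of $\Sigma$, yielding the desired pRc metric $g$ on $\Em_1^n\cup_\Phi\Em_2^n$.
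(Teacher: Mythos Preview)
The paper does not give its own proof of this theorem; it is quoted verbatim as a result of Perelman \cite{Per1} and used as a black box throughout. So there is no ``paper's proof'' to compare against beyond the original source.

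Your outline is the correct strategy and is exactly Perelman's argument: write the $C^0$-glued metric in Fermi coordinates as $dt^2+g_t$, mollify $g_t$ in $t$ on a scale $\delta$, and use that the negative-definite jump in $\partial_t g_t$ produces, after mollification, a large positive $O(\delta^{-1})$ contribution to the normal--normal and tangential--tangential blocks of $\Ric$ via the $-\partial_t A_t$ terms.

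However, your identification of the ``main obstacle'' is mistaken. The mixed block
\[
\Ric(\partial_t,X)=-\mathrm{div}_{g_t}A_t(X)+d\,\mathrm{tr}(A_t)(X)
\]
involves only \emph{tangential} derivatives of $A_t$. Mollification is purely in the $t$-variable, so it commutes with tangential differentiation; and since the mollified $A_t$ is a convex average of uniformly bounded shape operators, its tangential derivatives are uniformly bounded as well. Hence $\Ric(\partial_t,X)=O(1)$, not $O(\delta^{-1})$, and is dominated by the $O(\delta^{-1})$ positive diagonal spikes without any further work. There is no need for a ``tangential perturbation on $\Sigma$ to kill it'' or any delicate matching; once you see that the off-diagonal block is harmless, the argument closes immediately for $\delta$ small. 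This is the point at which your write-up creates difficulty where none exists.
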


\noindent In the study of psc metrics on manifolds with boundary, it is typical to require the metric be a product on a collar of the boundary. This condition allows us to smoothly glue together two psc manifolds along an isometry of their boundaries. Clearly this collar condition cannot be used for pRc. Theorem \ref{GluingTheorem} motivates considering pRc Riemannian manifolds with strictly convex boundary, meaning that $\2$ is positive definite. Given two such manifolds with isometric boundary, Theorem \ref{GluingTheorem} allows us to smoothly glue them together. 

Another common theme in the study of psc metrics is the use of a psc concordance. A psc concordance is a psc metric on a cylinder $[0,1]\times \Em^n$ which splits as a product near each boundary component. When we referred to \cite[Theorem]{Gaj} in the proof of Observation \ref{Ahat} or Theorem \ref{SpinTheorem}, we were referring to a psc concordance. If we are interested at developing a similar tool for the study of pRc metrics, we must find a suitable boundary condition that replaces the product condition. As we have mentioned, the hypotheses of Theorem \ref{GluingTheorem} suggests convexity as an ideal boundary condition. Sadly, a pRc Riemannian manifold with weakly mean convex boundary must have a connected boundary by \cite[Theorem 1]{Law}. So if we are interested in a notion of pRc concordance, we must allow for one of the boundaries to have negative definite second fundamental form. Ideally though, $|\2|$ to be as small as possible. Motivated by this we make the following definition. Note the terminology we introduce here is meant as an analogy to ``almost non-negatively curved.'' 

\begin{definition}\label{AlmostWeak} Given a manifold $\Em^{n+1}$ with boundary, let $\En^n$ be a connected component of $\partial \Em^{n+1}$. We say that $(\En^{n},h)$ is an \emph{almost weakly convex} boundary component of $\Em^{n+1}$ if for each $\nu>0$, there is a metric $g_\nu$ on $\Em^{n+1}$ such that $g_\nu|_{\En^n} = h$ and $\2_{g_\nu}|_{\Em^n}>-\nu g_\nu$. Moreover, we say that $\Em^{n+1}$ is \emph{Ricci-positive with almost weakly convex boundary isometric to} $(\En^n,h)$ if each $g_\lambda$ is pRc. 
\end{definition}  

\noindent  The requirement that the boundary have a fixed isometry type in Definition \ref{AlmostWeak} is useful for our purposes as we always intend to use Theorem \ref{GluingTheorem} on such Riemannian manifolds. If one wishes to remove this requirement, the definition ought to require that the intrinsic diameter of $\En^n$ with respect to $g_\nu$ is bounded above uniformly. If not, then it is always possible to take $g_\nu= (1/\nu)^2 g$. 

The following observation is our main reason for introducing the concept of almost weakly convex boundary. 

\begin{obs}\label{WeCanGlue} Given a pRc metric on $\Em_1^n$ with strictly convex boundary and a pRc metric on $\Em_2^n$ with almost weakly convex boundary, if the boundaries are isometric then there is a pRc metric on $\Em_1^n\cup_\partial \Em_2^n$ by Theorem \ref{GluingTheorem}.
\end{obs}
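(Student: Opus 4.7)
The plan is to reduce the statement directly to Theorem \ref{GluingTheorem} by choosing the free parameter $\nu$ in Definition \ref{AlmostWeak} small enough to be dominated by the strict convexity of the boundary of $\Em_1^n$. The whole content of the observation is just that ``strictly positive beats arbitrarily small negative,'' combined with compactness of the common boundary.

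First I would fix the pRc metric $g_1$ on $\Em_1^n$ with strictly convex boundary and use compactness of $\partial \Em_1^n$ together with the assumption $\2_{g_1}>0$ to extract a positive constant $\lambda>0$ with $\2_{g_1}\geq \lambda\, g_1|_{\partial \Em_1^n}$ pointwise. Let $\Phi:\partial \Em_1^n\to \partial \Em_2^n$ be the given isometry onto $(\En^n,h)$. Next, I would apply Definition \ref{AlmostWeak} with any $\nu\in (0,\lambda)$ to produce a pRc metric $g_\nu$ on $\Em_2^n$ with $g_\nu|_{\En^n}=h$ and $\2_{g_\nu}>-\nu\, g_\nu|_{\En^n}$. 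Because $\Phi$ is an isometry, pulling back the second fundamental form gives $\Phi^*\2_{g_\nu}>-\nu\, g_1|_{\partial \Em_1^n}$, so that
\[
\2_{g_1}+\Phi^*\2_{g_\nu}\;>\;(\lambda-\nu)\,g_1|_{\partial \Em_1^n}\;>\;0.
\]
This is precisely the positivity hypothesis needed to apply Theorem \ref{GluingTheorem} to the pair $(\Em_1^n,g_1)$ and $(\Em_2^n,g_\nu)$ along $\Phi$, producing the desired pRc metric on $\Em_1^n\cup_\Phi \Em_2^n$.

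There is essentially no obstacle here; the only subtlety worth flagging is that the choice of metric on $\Em_2^n$ depends on $\Em_1^n$ through the constant $\lambda$, so the gluing is not canonical in $g_\nu$. In particular the procedure only uses one element of the family $\{g_\nu\}$ guaranteed by Definition \ref{AlmostWeak}, namely any one with $\nu<\lambda$, which shows why the ``almost'' in almost weakly convex is exactly the right weakening of strict convexity for the purposes of Observation \ref{WeCanGlue}.
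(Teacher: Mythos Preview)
Your proposal is correct and is exactly the argument the paper has in mind; the paper does not actually write out a proof of this observation, treating it as immediate from Theorem \ref{GluingTheorem} once one notes that strict convexity on a compact boundary gives a uniform lower bound that the parameter $\nu$ can be taken below. Your write-up simply makes explicit the compactness step and the choice $\nu<\lambda$, which is all there is to it.
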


With Definition \ref{AlmostWeak} in hand we can now give a reasonable candidate for a replacement of psc concordance suitable for the study of pRc metrics.

\begin{definition}\label{Concordance} We call a family of pRc metrics $G_\nu$ on $[0,1]\times \Em^n$ a \emph{pRc-concordance} from $g_1$ to $g_2$ if the boundary $\{0\}\times \Em^n$ is almost weakly convex and isometric to $(\Em^n,g_1)$ and the boundary $\{1\}\times \Em^n$ is strictly convex and isometric to $(\Em^n, R^2_\nu g_2)$ for some constant $R_\nu$. 
\end{definition}

\noindent Note that, unlike a psc-concordance, a pRc-concordance is a family of metrics and is directional by nature. Because it is directional it does not directly give an equivalence relation: reflexivity fails. We can however define an equivalence relation by requiring the existence of a pRc-concordance in both directions. One of the reasons we include the possibility of scaling $g_2$ by a constant, is so that a pRc metric is pRc-concordant to itself. Note that a feature of the Gauss equation is that neither $g_1$ nor $g_2$ necessarily are intrinsically pRc even if they are pRc-concordant.

 In the proof of Observation \ref{Ahat} and \ref{SpinTheorem} we referenced \cite[Theorem]{Gaj}. This claims that if two metrics are psc isotopic, then they are psc-concordant. The following result claims the same holds for pRc isotopies. 

\begin{theorem}\cite[Theorem 7]{BLBNew}\label{IsotopyConcordance} If $g_1$ and $g_2$ are connected via a path in $\pRc(\Em^n)$, then there is a pRc-concordance from $g_1$ to $g_2$. 
\end{theorem}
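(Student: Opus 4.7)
The plan is, given a smooth path $\{h_t\}_{t\in[0,1]}$ in $\pRc(\Em^n)$ from $g_1$ to $g_2$, to produce for each $\nu>0$ a pRc metric $G_\nu$ on a cylinder diffeomorphic to $[0,1]\times\Em^n$, built as a warped product
$$ G_\nu \;=\; ds^2 + f_\nu(s)^2\, h_{\varphi_\nu(s)} $$
on $[0,L_\nu]\times\Em^n$, where $\varphi_\nu\colon[0,L_\nu]\to[0,1]$ is a monotone reparametrization of the path made very slowly varying by taking $L_\nu$ large (with $|\varphi_\nu'|,|\varphi_\nu''|=O(1/L_\nu)$, flat to high order at the endpoints), and $f_\nu$ is a gently concave warping function with $f_\nu(0)=1$. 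The warping controls the extrinsic boundary geometry while $\varphi_\nu$ drives the intrinsic isotopy of metrics.

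To match Definition \ref{Concordance}, I would choose $f_\nu$ with $f_\nu'(0)<\nu$ and $f_\nu'(L_\nu)>0$, so that the outward second fundamental forms $\mp(f_\nu'/f_\nu)G_\nu$ at $s=0$ and $s=L_\nu$ give the almost weakly convex and strictly convex conditions, respectively. A concrete model is $f_\nu(s)=1+(\nu/2)s-\epsilon_\nu s^2$ with $\epsilon_\nu L_\nu<\nu/4$, for which $f_\nu''=-2\epsilon_\nu<0$ and the standard warped product formulas give $\Ric_{G_\nu}(\partial_s,\partial_s)=-nf_\nu''/f_\nu>0$ together with positive spatial Ricci (using the uniform lower bound on $\Ric_{h_t}$ from compactness of the path and the smallness of $f_\nu',f_\nu''$ for $\nu$ small). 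The induced boundary metrics are $h_0=g_1$ at $s=0$ and $f_\nu(L_\nu)^2 h_1=R_\nu^2 g_2$ at $s=L_\nu$, as required.

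The main obstacle is that the convexity constraints force $\epsilon_\nu L_\nu<\nu$, so the principal term $-nf_\nu''/f_\nu$ is bounded by $O(\nu/L_\nu)$, and it must dominate the time-derivative corrections of order $C_{\mathrm{path}}/L_\nu$ that arise from the $s$-dependence of $h_{\varphi_\nu(s)}$; this direct approach fails for small $\nu$ or long paths. The remedy is to subdivide $[0,1]$ into $N$ subintervals of length bounded by a constant depending only on $\{h_t\}$ and $n$ (independent of $\nu$), construct on each a sub-concordance $G_\nu^{(i)}$ via the ansatz above with its own almost-weak-convexity parameter $d_i$ and its own large length $L_\nu^{(i)}$, and then iteratively glue them using Theorem \ref{GluingTheorem}, choosing the strict convexity at the end of $G_\nu^{(i)}$ strong enough to dominate the almost weak convexity at the start of $G_\nu^{(i+1)}$ in the sense required by the gluing result. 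Since the allowed subpath length turns out to be independent of the $d_i$, finitely many sub-concordances suffice for each $\nu$, and the accumulated product of scalings at each gluing becomes the final $R_\nu$.
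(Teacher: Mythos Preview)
The paper does not give its own proof of this statement; it is imported verbatim from \cite[Theorem~7]{BLBNew} and used as a black box, so there is nothing in the present paper to compare against directly.

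That said, your strategy is the natural one and is essentially how such results are proven: stretch the cylinder so that the $s$-derivatives of $h_{\varphi(s)}$ become negligible, and insert a gently concave warping $f_\nu$ to produce the asymmetric boundary behaviour required by Definition~\ref{Concordance}. Your identification of the obstruction is also correct: the constraint $\epsilon_\nu L_\nu<\nu/4$ forces the helpful term $-n f_\nu''/f_\nu\approx 2n\epsilon_\nu$ to be $O(\nu/L_\nu)$, while the error terms in $\Ric(\partial_s,\partial_s)$ coming from $\partial_s h_{\varphi(s)}$ contain a piece of size roughly $(\dot f_\nu/f_\nu)\,\dot\varphi_\nu\cdot\|\partial_t h_t\|\sim C_{\text{path}}\,\nu/L_\nu$ with a constant you do not control. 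Subdividing the isotopy into $N$ pieces replaces $\dot\varphi$ by $\dot\varphi/N$ on each piece, so the inequality needed for positivity becomes $C_{\text{path}}/N<c(n)$, which is indeed independent of the convexity parameter $d_i$ on that piece; this is exactly why finitely many pieces suffice uniformly in $\nu$.

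One point deserves to be made explicit in your write-up. When you glue piece $i$ to piece $i+1$, the strictly convex end of piece $i$ is isometric to $(R^{(i)})^2 h_{i/N}$, not to $h_{i/N}$, so piece $i+1$ must first be rescaled by $(R^{(i)})^2$. Under this rescaling the second fundamental form of its almost-weakly-convex end scales by $1/R^{(i)}$, so the parameter $d_{i+1}$ must be chosen \emph{after} $R^{(i)}$ is known, and small enough that the convexity at the end of piece $i$ dominates it in the sense of Theorem~\ref{GluingTheorem}. Since the pieces are built sequentially from $i=1$ upward this introduces no circularity, and the accumulated scale $R_\nu=\prod_i R^{(i)}$ is then the constant appearing in Definition~\ref{Concordance}.
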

\noindent Note that by reversing the path, we also have a pRc-concordance from $g_2$ to $g_1$. The principal way in which we will use Theorem \ref{IsotopyConcordance} is summarized as follows.

\begin{theorem}\cite[Theorem C]{BLBNew}\label{FixBoundary} Given a pRc metric $g$ on $\Em^n$ with weakly convex boundary isometric to $g_1$, if $g_1$ and $g_2$ are pRc-isotopic, then there is a pRc metric $\tilde{g}$ on $\Em^n$ with strictly convex boundary isometric to $g_2$. 
\end{theorem}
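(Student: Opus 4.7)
The plan is to bridge between the boundary metrics $g_1$ and $g_2$ using the pRc-concordance produced by Theorem \ref{IsotopyConcordance}, and then attach this concordance to $(\Em^n,g)$ via the pRc gluing theorem to swap out the boundary isometry type without losing positivity of Ricci or strict convexity.

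First, I would apply Theorem \ref{IsotopyConcordance} to a path in $\pRc(\partial \Em^n)$ joining $g_1$ to $g_2$. This yields a pRc-concordance on the cylinder $[0,1]\times\partial\Em^n$, namely a family of pRc metrics $G_\nu$ with $\{0\}\times\partial\Em^n$ almost weakly convex and isometric to $(\partial\Em^n,g_1)$, and $\{1\}\times\partial\Em^n$ strictly convex and isometric to $(\partial\Em^n,R_\nu^2 g_2)$ for some constant $R_\nu$. The key feature of Definition \ref{AlmostWeak} we exploit is that the second fundamental form on the $\{0\}$-end satisfies $\2_{G_\nu}>-\nu g_\nu$ for every $\nu>0$, while the intrinsic boundary metric there is exactly $g_1$.

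Second, I would glue $(\Em^n,g)$ to the cylinder $([0,1]\times\partial\Em^n, G_\nu)$ along the identity of $(\partial\Em^n,g_1)$. For $\nu$ sufficiently small, the sum of second fundamental forms $\2_g + \Phi^*\2_{G_\nu}$ is positive definite: the boundary of $\Em^n$ contributes a non-negative (and in the interior of the face where it is genuinely convex, positive) term, while the cylinder side contributes at worst $-\nu g$. Hence Theorem \ref{GluingTheorem}, in the form of Observation \ref{WeCanGlue}, produces a pRc metric on $\Em^n\cup_\partial ([0,1]\times\partial\Em^n)$, a manifold clearly diffeomorphic to $\Em^n$. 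Its new boundary is the $\{1\}$-end of the cylinder, so it is strictly convex and isometric to $(\partial\Em^n, R_\nu^2 g_2)$.

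Third, to eliminate the scaling factor I would rescale the resulting metric on $\Em^n$ by $R_\nu^{-2}$. Positive Ricci curvature is invariant under homothety, and strict convexity of the boundary is preserved as well, while the induced boundary metric becomes isometric to $g_2$. This produces the desired $\tilde g$. The main obstacle is the compatibility of the two boundary conditions at the gluing step: one needs to verify that the pointwise deficit $\nu$ in the almost weakly convex end of the concordance can be absorbed by the strict convexity contribution coming from $g$, which is where the flexibility to choose $\nu$ arbitrarily small in Definition \ref{AlmostWeak} is essential. Everything else is bookkeeping.
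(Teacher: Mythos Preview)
Your outline is essentially the paper's argument, but there is a genuine gap at the gluing step. The hypothesis on $(\Em^n,g)$ is only that the boundary is \emph{weakly} convex, i.e.\ $\2_g\ge 0$. At points and directions where $\2_g=0$, the sum $\2_g+\Phi^*\2_{G_\nu}$ can be as negative as $-\nu$, so it is not positive definite for any $\nu>0$. Theorem~\ref{GluingTheorem} (and hence Observation~\ref{WeCanGlue}) therefore does not apply as you have written it; your own closing remark about ``the strict convexity contribution coming from $g$'' presupposes something you were not given.

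The paper patches this with one extra step before gluing: perform a small conformal change $e^{2\phi}g$ with $\phi\equiv 0$ on $\partial\Em^n$ but with positive inward normal derivative. This leaves the induced boundary metric equal to $g_1$, shifts the second fundamental form by a positive multiple of the boundary metric (so the boundary becomes \emph{strictly} convex), and preserves positive Ricci curvature since that condition is open and the change can be taken $C^2$-small. Once the boundary is strictly convex with a uniform lower bound $\2>c\,g_1$, you can choose $\nu<c$ in the concordance $G_\nu$ and your Steps~2 and~3 go through verbatim.
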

\noindent The proof of Theorem \ref{FixBoundary} follows by performing a small conformal change so the boundary becomes strictly convex then by Observation \ref{WeCanGlue} we can glue the pRc-concordance at the almost weakly convex end to its boundary. After rescaling by an appropriate constant the resulting boundary will be strictly convex and isometric to $g_2$.

Theorem \ref{IsotopyConcordance} demonstrates that, unlike pRc Riemannian manifolds with weakly convex boundaries, pRc Riemannian manifolds with almost weakly convex boundaries may have disconnected boundary. In fact, the second technique introduced in \cite{Per1} is a construction of a pRc Riemannian manifold with almost weakly convex boundary that has \emph{arbitrarily many} connected components. 

\begin{theorem}\cite{Per1}\label{Docking} For $n\ge 4$ and any $k$, there is a family of pRc metrics on $\Sphere^n\setminus \left(\bigsqcup_k \Disk^m\right)$ with almost weakly convex boundary isometric to $\bigsqcup_k \Sphere^{m-1}_1$. 
\end{theorem}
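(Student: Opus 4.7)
The plan is to construct the family inductively in $k$, with the base case $k=1$ handled directly and the inductive step provided by a "trinion" (three-holed sphere) building block glued on via Observation \ref{WeCanGlue}. For $k = 1$, the closed round hemisphere $\Sphere^n_+$ has positive Ricci curvature and totally geodesic boundary isometric to $\Sphere^{n-1}_1$; the constant family $g_\nu$ equal to the round metric then satisfies Definition \ref{AlmostWeak} trivially since $\2 \equiv 0 > -\nu g$ for every $\nu > 0$.

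For the inductive step I would construct a pRc "trinion" metric on a manifold $T^n$ diffeomorphic to $\Sphere^n \setminus (\Disk^n_0 \sqcup \Disk^n_1 \sqcup \Disk^n_2)$ such that $\partial \Disk^n_0$ is strictly convex and round of some (possibly large) radius $R$, while each of $\partial \Disk^n_1$ and $\partial \Disk^n_2$ is almost weakly convex and isometric to $\Sphere^{n-1}_1$. The construction uses a doubly warped product of the form
\begin{equation*}
dt^2 + f(t)^2 g_{\Sphere^a} + h(t)^2 g_{\Sphere^b}, \qquad a + b = n - 1, \quad a, b \geq 1,
\end{equation*}
on a suitable interval, capping off by letting $f$ collapse smoothly to zero at one end (closing up the region where one small disk would be glued back) and doing the symmetric thing at the other end via $h$. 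The Ricci positivity reduces to a coupled system of inequalities in $(f, h, f', h', f'', h'')$. By choosing $f$ and $h$ concave with nearly-horizontal tangents $|f'|, |h'| < \nu$ at the small boundaries and matching smoothly to a strictly convex cap at the large boundary, one can simultaneously arrange Ricci positivity, the prescribed round-sphere boundary isometry, and the almost weak convexity condition of Definition \ref{AlmostWeak}.

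Given the trinion block $T^n$, one iteratively attaches copies: the strictly convex large boundary of a new copy is glued via Observation \ref{WeCanGlue} (hence Theorem \ref{GluingTheorem}) to one of the almost weakly convex small boundaries of the previously constructed configuration. Starting from the closed hemisphere and performing $k-1$ such gluings yields a pRc metric on $\Sphere^n \setminus \bigsqcup_k \Disk^n$ with the required family of almost weakly convex unit-sphere boundaries.

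The main obstacle is the explicit construction of the trinion block, which amounts to solving a coupled nonlinear system of differential inequalities with prescribed boundary data at both ends, uniformly in the parameter $\nu$ controlling almost weak convexity. The dimension bound $n \geq 4$ enters because the Ricci formulas for the doubly warped product contain cross-terms of the form $f' h' / (f h)$ which must be dominated by the positive "round sphere" contributions proportional to $(a-1)(1-(f')^2)/f^2$ and $(b-1)(1-(h')^2)/h^2$; this forces $a + b \geq 3$, hence $n \geq 4$.
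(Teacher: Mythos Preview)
The paper does not itself prove Theorem~\ref{Docking}; it is cited from \cite{Per1}, and the construction is only summarized inside the proof of Corollary~\ref{Connect}. That construction is \emph{direct} rather than inductive: one equips $\Sphere^n$ with a doubly warped product $dt^2+\cos^2(t)\,d\theta^2+f^2(t)\,ds_{n-2}^2$ whose principal orbits are $\Sphere^1\times\Sphere^{n-2}$, and then removes all $k$ disks \emph{simultaneously} from an arbitrarily thin neighborhood of the singular circle $\{t=0\}$. The $\Sphere^1$ factor is what allows arbitrarily many disjoint disk centers, and the thinness of the neck (governed by $f$) is what forces the boundaries to be almost weakly convex. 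Your inductive trinion scheme is a genuinely different route.

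There are, however, two real gaps in your proposal. First, the trinion construction does not make sense as written. A doubly warped product on an interval, with $f$ collapsing at one end and $h$ at the other, yields a \emph{closed} $\Sphere^n$ with no boundary whatsoever; yet you then speak of ``nearly-horizontal tangents $|f'|,|h'|<\nu$ at the small boundaries.'' A cohomogeneity-one piece has at most two ends, so a three-holed sphere cannot arise this way without removing a disk from the interior, which you never describe. Moreover, with $a,b\ge 1$ the singular orbits are positive-dimensional spheres, so deleting a tube around one of them produces a boundary diffeomorphic to $\Sphere^a\times\Sphere^b$, not $\Sphere^{n-1}$.

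Second, even if you had a trinion with one strictly convex round boundary of radius $R$ and two almost weakly convex round boundaries of radius $1$, the induction does not close. Gluing via Observation~\ref{WeCanGlue} requires isometric boundaries, so attaching the radius-$R$ end to a radius-$1$ boundary of the previous stage forces a global rescaling of one side; after that, the surviving boundary spheres of the glued object have \emph{mixed} radii, and the next step cannot proceed with a single trinion model. Perelman's all-at-once construction avoids this entirely by producing all $k$ boundaries with identical local geometry in one stroke.
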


\noindent For a summary of the construction of this metric we recommend the reader consult \cite[Section 3.1]{BLBThesis}. To finish the construction of pRc metrics on $\#_k\CP^2$, \cite{Per1} constructed a pRc metric on $\CP^2\setminus \Disk^4$ with round and convex boundary, named the \emph{core}. By Observation \ref{WeCanGlue}, we can glue $k$ copies of the core $\CP^2\setminus \Disk^4$ to $\Sphere^n\setminus \left( \bigsqcup_k \Disk^n\right)$ producing a pRc metric on $\#_k \CP^2$. 


Motivated by the construction in \cite{Per1}, we make the following definition. 
\begin{definition}\label{Core} We will say that $\Em^n$ admits a \emph{core metric}, if there is a pRc metric on $\Em^n\setminus \Disk^n$ with strictly convex boundary isometric to $\Sphere_1^{n-1}$. 
\end{definition}

\noindent Following \cite{Per1} it is immediate that you can form a pRc connected sum on $\#_i \Em_i^n$ if each $\Em_i^n$ admit a core metric. Other than $\CP^2$ in \cite{Per1}, $\Sphere^n$ was the only other space known to admit a core metric. The main goal of our previous work in \cite{BLBOld,BLBThesis,BLBNew} was in constructing new examples of core metrics, which we summarize now. 

\begin{theorem}\label{TheCores} The following manifolds admits core metrics: 
\begin{enumerate}
\item\cite[Theorem C]{BLBOld} $\CP^n$, $\HP^n$, and $\OP^2$;
\item\cite[Theorem B]{BLBNew} $\Sphere(\Ee)$, where $\Ee\rightarrow \Bee^n$ is a rank $k\ge 4$ vector bundle over a base $\Bee^n$ that admits a core metric;
\item\cite[Theorem C]{BLBThesis} the boundary of a tree-like plumbing of $\Disk^{k}$-bundles over $\Sphere^{k}$ for $k\ge 4$, or the boundary of plumbing a $\Disk^p$ and $\Disk^q$ bundle over spheres for $p\ge 3$ and $q\ge 4$.
\end{enumerate}
\end{theorem}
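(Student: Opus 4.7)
The plan is to produce each family of core metrics by isolating a region of the target manifold that is a disk bundle or plumbing-type neighborhood, putting on it a pRc metric tailored so that its boundary matches the round $\Sphere^{n-1}_1$ with strictly convex second fundamental form, and then inverting/completing using the gluing machinery of Theorem \ref{GluingTheorem} together with the pRc-isotopy upgrade of Theorem \ref{FixBoundary}. In each case the hardest task is arranging the boundary to be simultaneously round \emph{and} strictly convex; to decouple these two requirements we will first build a metric with strictly convex boundary whose intrinsic geometry is merely pRc-isotopic to the round sphere, and then apply Theorem \ref{FixBoundary} to replace it with the round metric while keeping strict convexity.

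For part (1), I would exploit that each of $\CP^n$, $\HP^n$, $\OP^2$ admits a canonical symmetric pRc metric and a tubular neighborhood of a point that is a concrete homogeneous disk bundle (the normal bundle of a point, or equivalently the complement of the projective hyperplane at infinity). First I would write the Fubini–Study-type metric in a doubly-warped form adapted to the radial distance $t$ from a point, so the geodesic sphere at radius $t$ is a Berger-type sphere. Next I would interpolate, through a one-parameter family of doubly warped functions chosen to preserve pRc, from the Berger boundary at small $t$ to a round, strictly convex boundary sphere. The Ricci-positivity conditions for doubly-warped products produce explicit ODE inequalities on the warping functions, which is the technical crux of \cite{BLBOld}.

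For part (2), starting from a core metric $g_B$ on the base $\Bee^n$, consider the pulled-back strictly convex round boundary $\Sphere^{n-1}_1 \subset \Bee^n\setminus \Disk^n$ on the total space of $\Sphere(\Ee)$. Equip $\Sphere(\Ee)$ with a connection metric of the form $g_B \oplus \rho^2 g_{\Sphere^{k-1}}$ plus horizontal correction; as remarked after Proposition \ref{list}, shrinking the fiber radius $\rho\to 0$ makes the Ricci curvatures of the total space converge to those of $\Bee^n$ on horizontal directions and produces a large positive Ricci contribution $(k-2)/\rho^2$ on the fiber (this is where the hypothesis $k\ge 4$ enters, giving room to absorb mixed-curvature error terms). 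The preimage of the complementary $\Disk^n$ in the base is a trivial bundle $\Disk^n\times \Sphere^{k-1}$, and on the complement one now has a pRc metric whose boundary is the total space of $\Sphere(\Ee)$ restricted to $\Sphere^{n-1}_1$—a convex boundary that is pRc-isotopic to a round $\Sphere^{n+k-2}_1$ after a further conformal adjustment. Applying Theorem \ref{FixBoundary} converts this into a core metric in the sense of Definition \ref{Core}.

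For part (3), I would decompose a tree-like plumbing of $\Disk^k$-bundles over $\Sphere^k$ into its elementary pieces, each of which is a $\Disk^k$-bundle over $\Sphere^k$ with two small disks removed (one for each plumbing connection). On each piece one puts the same doubly-warped pRc metric with two strictly convex, round-up-to-isotopy boundaries, modelled on the construction of part (2), then iteratively glues along the tree using Observation \ref{WeCanGlue} and Theorem \ref{GluingTheorem}; the tree-like (acyclic) structure ensures no monodromy obstruction arises. The mixed $\Disk^p\#\Disk^q$ plumbing is handled by the same recipe with an asymmetric piece, where $p\ge 3$ and $q\ge 4$ are the sharp dimensions needed for the fiber-shrinking arguments on each side to keep the mixed terms in the Ricci tensor positive. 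The main obstacle throughout is keeping the quantitative strict-convexity estimate $\2 > 0$ uniform while simultaneously fixing the intrinsic boundary to be exactly the unit round sphere; this is precisely what Theorem \ref{FixBoundary} and Definition \ref{AlmostWeak} were introduced to handle, and their systematic use is what separates these constructions from earlier ad hoc ones in \cite{Per1}.
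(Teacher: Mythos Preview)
The paper does not actually prove Theorem \ref{TheCores}; it is stated purely as a summary of results established in the author's earlier work \cite{BLBOld,BLBNew,BLBThesis}, with each item carrying its own citation and no argument given here. So there is no in-paper proof to compare your proposal against.

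Evaluating your sketch on its own merits, part (2) contains a genuine topological error. You propose to remove from $\Sphere(\Ee)$ the preimage of the core disk $\Disk^n\subset \Bee^n$, leaving $\Sphere(\Ee|_{\Bee^n\setminus\Disk^n})$ with boundary $\Sphere(\Ee|_{\Sphere^{n-1}})$, and then assert that this boundary is ``pRc-isotopic to a round $\Sphere^{n+k-2}_1$.'' But $\Sphere(\Ee|_{\Sphere^{n-1}})$ is an $\Sphere^{k-1}$-bundle over $\Sphere^{n-1}$---for a trivial bundle it is literally $\Sphere^{n-1}\times\Sphere^{k-1}$---which is not even homeomorphic to $\Sphere^{n+k-2}$, so no pRc-isotopy to a round sphere can exist and Theorem \ref{FixBoundary} does not apply. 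A core metric in the sense of Definition \ref{Core} requires removing a single embedded $\Disk^{n+k-1}$ from the total space, and the complement of such a disk is topologically different from the restriction of the bundle to $\Bee^n\setminus\Disk^n$. The construction in \cite{BLBNew} has to locate an honest disk in $\Sphere(\Ee)$ and build the metric so that \emph{its} boundary sphere is round and convex; this is substantially more delicate than what you outline.

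The same caution applies to your sketch of (3): the interfaces along which you propose to glue plumbing pieces are products of spheres, not single spheres, so iterated use of Observation \ref{WeCanGlue} on those pieces does not by itself produce a manifold-minus-disk with spherical boundary. Your outline for (1) is broadly in the right spirit.
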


Motivated by the idea of almost weakly convex boundaries we make the following definition in analogy to a core metric.  

\begin{definition}\label{Socket} We will say that $\Em^n$ admits a \emph{socket metric}, if $\Em^n\setminus \Disk^n$ admits a pRc metric with almost weakly convex boundary isometric to $\Sphere_1^{n-1}$. 
\end{definition} 

\noindent Note that any core metric is a socket metric, but admitting a socket metric is weaker than admitting a core metric. By \cite[Thereom 1]{Law}, if $\Em^n$ admits a core metric, $\pi_1(\Em^n)=0$. However it is possible to concoct a socket metric on $\RP^n$ (remove two large geodesic balls from an appropriately sized $\Sphere_R^n$ and take the quotient under the antipodal map). With a little more work it is possible to show the following. 

\begin{theorem}\cite[Corollary 4.4]{BLBOld}\label{TheSockets} For $n\ge 4$, $\RP^n$ and lens spaces all admit socket metrics. 
\end{theorem}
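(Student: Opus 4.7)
The plan is to construct the required socket metrics directly as $\Gamma$-quotients of round spheres with a $\Gamma$-invariant family of geodesic balls removed, the $\RP^n$ case being the transparent prototype hinted at in the text.

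For $\RP^n$, given $\nu > 0$ I would pick $R = R(\nu) > 1$ slightly larger than $1$ and set $r = R \arcsin(1/R)$, so that $R \sin(r/R) = 1$ and $r/R < \pi/2$. Excising the two open geodesic $r$-balls centered at a pair of antipodal points in $\Sphere^n_R$ produces a smooth, antipodally invariant domain whose quotient by the antipodal involution is $\RP^n \setminus \Disk^n$. A direct computation in geodesic polar coordinates shows that the induced boundary metric is isometric to the round unit sphere $\Sphere^{n-1}_1$ and that the boundary second fundamental form $\2$ (with outward normal pointing into the removed ball) equals $-\cos(r/R)$ times the induced metric. Since $\cos(r/R) = \sqrt{1 - 1/R^2} \to 0^+$ as $R \to 1^+$, choosing $R(\nu)$ close enough to $1$ yields $\2 > -\nu g_\nu$, verifying the almost weak convexity condition of Definition \ref{AlmostWeak}, so the family $\{g_\nu\}$ provides the desired socket metric in the sense of Definition \ref{Socket}.

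For a general lens space $L = \Sphere^{2k-1}/\Gamma$ with $|\Gamma| \geq 3$, the naive construction fails because the minimum angular distance $d_0$ between distinct points of a $\Gamma$-orbit on the unit sphere is strictly less than $\pi$; $\Gamma$-equivariant disjointness then forces $r/R < d_0/2 < \pi/2$, which is incompatible with the $r/R \to \pi/2$ needed to drive $\cos(r/R)$ to zero. My remedy would be to split the construction into two pieces: on the bulk of a fundamental domain use the round metric of $\Sphere^{2k-1}_R$ with a small geodesic ball of radius $\rho < R d_0/2$ removed (this descends to $L \setminus \Disk^{2k-1}_\rho$ with sharply concave boundary of $\2 = -\cot(\rho/R)/R$), and near the hole glue in a $\Gamma$-equivariant pRc cap (of doubly-warped or Perelman-style docking type, cf.\ Theorem \ref{Docking}) whose inner boundary is strictly convex enough to satisfy $\2_1 + \Phi^*\2_2 > 0$ as in Theorem \ref{GluingTheorem} and whose outer boundary is isometric to $\Sphere^{2k-2}_1$ with second fundamental form bounded below by $-\nu g_\nu$.

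The main obstacle is this cap construction: the inner matching boundary must be strictly convex of order at least $\cot(\rho/R)/R$, which diverges as $\rho \to 0$, while the outer boundary is simultaneously nearly totally geodesic, and both conditions must coexist with pRc throughout the cap. The standard warped-product ansatz $ds^2 + \phi(s)^2 g_{\Sphere^{2k-2}_1}$ is constrained by $\phi''\leq 0$ and conflicting boundary jets on $\phi'$, so achieving all conditions at once will require either a cap whose cross-sections are non-round (breaking the simple ansatz) or a Perelman-style docking argument engineered to produce a single almost weakly convex outer boundary; verifying the existence and $\nu$-uniformity of such a cap is the crux of the proof. Once the cap is produced and glued $\Gamma$-equivariantly, descent to $L$ gives the family $\{g_\nu\}$ on $L \setminus \Disk^{2k-1}$ and the verification against Definition \ref{Socket} is immediate.
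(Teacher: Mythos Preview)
Your $\RP^n$ argument is correct and matches the paper's own one-sentence sketch preceding the theorem (remove two antipodal geodesic balls from $\Sphere^n_R$ with $R$ slightly larger than $1$ and pass to the quotient). Note, however, that the paper does not actually prove Theorem \ref{TheSockets}; it merely records this hint for $\RP^n$ and cites \cite[Corollary 4.4]{BLBOld} for the general statement.

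For lens spaces your proposal has a genuine gap: you correctly diagnose why the round-metric-plus-antipodal-balls trick fails once $|\Gamma|\ge 3$, but your remedy---gluing a pRc ``cap'' onto $L\setminus\Disk_\rho$ whose inner boundary is arbitrarily convex and whose outer boundary is almost weakly convex and unit round---is never constructed. You yourself flag the competing boundary-jet constraints on the warped-product ansatz and call the cap's existence ``the crux of the proof,'' then stop. As written this is a plan, not a proof.

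There is also a much simpler route that avoids the cap entirely, and it is the one the surrounding text is pointing you toward. The docking-station metric of Theorem \ref{Docking} on $\Sphere^{2k-1}\setminus\bigl(\bigsqcup_m\Disk^{2k-1}\bigr)$ is, outside a small neighbourhood of the boundary, a doubly warped product $dt^2+\cos^2(t)\,d\theta^2+f^2(t)\,ds_{2k-3}^2$, with the $m$ deleted disks arranged along the circle $t=0$ (see the proof of Corollary \ref{Connect}). This ambient metric is invariant under rotations in $\theta$ and under all isometries of the round $\Sphere^{2k-3}$ factor; the lens-space action of $\mathbf{Z}/m\mathbf{Z}$ on $\Sphere^{2k-1}=\Sphere^1*\Sphere^{2k-3}$ is exactly of this type. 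Placing the $m$ disks along a single $\mathbf{Z}/m\mathbf{Z}$-orbit therefore makes the entire construction $\Gamma$-equivariant, and the quotient is directly a pRc metric on $L\setminus\Disk^{2k-1}$ with almost weakly convex boundary isometric to $\Sphere^{2k-2}_1$. No auxiliary cap or secondary gluing is needed; the docking station already does all the work.
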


We now summarize how Theorem \ref{Docking} may be used to mix-and-match core and socket metrics. 

\begin{corollary}\label{Connect} For $n\ge4$, if $\Em_i^n$ admits a core metric, then $\#_i\Em_i^n$ also admit a core metric, and in particular $\#_i\Em^n$ admits a pRc metric. 

If $\Ex^n$ admits a socket metric, then $\Ex^n\#\left(\#_i \Em_i^n \right)$ also admits a socket metric, and in particular admits a pRc metric. 
\end{corollary}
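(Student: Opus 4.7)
The plan is to build both constructions by attaching the given cores (and, in the second case, the socket) to Perelman's almost-weakly-convex ``docking station'' from Theorem \ref{Docking}, and, when necessary, promoting the single leftover almost weakly convex round boundary to a strictly convex one.

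For the first claim, I apply Theorem \ref{Docking} with $k+1$ holes to obtain a family $h_\nu$ of pRc metrics on $\Sphere^n \setminus \bigsqcup_{k+1} \Disk^n$ whose $k+1$ boundary components are all almost weakly convex and isometric to the round unit sphere. Letting $\lambda > 0$ be a common lower bound for the principal curvatures of the core boundaries $\partial(\Em_i^n \setminus \Disk^n)$, any choice of $\nu < \lambda$ satisfies the gluing hypothesis of Theorem \ref{GluingTheorem}, so Observation \ref{WeCanGlue} lets me iteratively glue each core $\bar g_i$ onto $k$ of the $k+1$ docking boundaries. The result is a pRc metric on $(\#_i \Em_i^n) \setminus \Disk^n$ whose single remaining boundary is almost weakly convex and isometric to $\Sphere_1^{n-1}$. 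To upgrade this boundary to strictly convex I invoke Theorem \ref{FixBoundary} with $g_1 = g_2$ equal to the round unit metric on $\Sphere^{n-1}$, which is pRc-isotopic to itself via the constant path; although Theorem \ref{FixBoundary} is stated for weakly convex boundaries, its proof, a small conformal perturbation $\tilde g = e^{2f}g$ with $f|_\partial = 0$ and $\partial_N f < 0$ followed by attaching a pRc-concordance and rescaling, carries over to the almost weakly convex setting once $\nu$ is chosen small enough that the perturbation both wipes out the $-\nu g$ deficit on the boundary and preserves positive Ricci. This produces a core metric on $\#_i \Em_i^n$.

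For the second claim I run the same argument with a docking station on $\Sphere^n \setminus \bigsqcup_{k+2} \Disk^n$. After promoting one of its $k+2$ almost weakly convex boundaries to strictly convex by the upgrade step above, Observation \ref{WeCanGlue} lets me attach the socket $\Ex^n \setminus \Disk^n$ at that newly strictly convex boundary (strict dominates almost weakly) and each core $\Em_i^n \setminus \Disk^n$ at one of the remaining $k$ almost weakly convex boundaries. The single leftover almost weakly convex round boundary then gives a socket metric on $\Ex^n \# (\#_i \Em_i^n)$.

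The main obstacle throughout is the upgrade step, since neither Theorem \ref{Docking} nor any application of Theorem \ref{GluingTheorem} produces a strictly convex boundary on its own. Resolving it requires controlling the conformal perturbation behind Theorem \ref{FixBoundary} uniformly in $\nu$, so that the resulting $C^2$-sized perturbation of the metric does not destroy positive Ricci on the compact core obtained after gluing. The rest of the argument is bookkeeping of the docking combinatorics and a single choice of $\nu$ small enough that all of the finitely many gluings and the conformal upgrade succeed simultaneously.
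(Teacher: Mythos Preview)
Your docking combinatorics are fine, but the ``upgrade step'' is a genuine gap, not bookkeeping. You want to promote an almost weakly convex round boundary to a strictly convex one by a conformal change $e^{2f}g_\nu$ with $f|_\partial=0$ and $\partial_N f|_\partial<-\nu$. For this to preserve positive Ricci you need the $C^2$ size of $f$---roughly $\nu$ divided by the available collar width---to be dominated by the Ricci lower bound $\delta(\nu)$ of $g_\nu$. But Perelman's family $h_\nu$ in Theorem~\ref{Docking} is built precisely by deforming near the boundary, and nothing you have said controls the ratio $\nu/\delta(\nu)$ as $\nu\to 0$. Invoking Theorem~\ref{FixBoundary} with $g_1=g_2$ does not help either: its hypothesis is a \emph{weakly} convex boundary, and its proof begins with exactly the conformal step you are trying to justify. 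More tellingly, a purely local conformal upgrade from socket to core cannot succeed in general---lens spaces admit socket metrics (Theorem~\ref{TheSockets}) but, being non-simply-connected, cannot admit core metrics by \cite[Theorem~1]{Law}---so whatever argument you give must exploit quantitative features of this particular family, and you have not identified them. You yourself flag this as the main obstacle and then state what ``resolving it requires'' without actually resolving it.

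The paper avoids the upgrade entirely by opening up the docking station rather than perturbing it. Perelman's metric on $\Sphere^n\setminus\bigl(\bigsqcup_k\Disk^n\bigr)$ is, away from a neighborhood of the $k$ boundary spheres, an explicit positively curved doubly warped product $dt^2+\cos^2(t)\,d\theta^2+f^2(t)\,ds_{n-2}^2$, with all $k$ boundaries sitting near the circle $t=0$. Excising a large geodesic ball in this warped-product region produces $\Disk^n\setminus\bigl(\bigsqcup_k\Disk^n\bigr)$ whose new ``large'' boundary is a geodesic sphere in a positively curved space, hence already strictly convex; its induced metric is a concave warped product, pRc-isotopic to the round metric by the straight-line homotopy of warping functions. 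Now Theorem~\ref{FixBoundary} applies on the nose to make this convex boundary round. Gluing the cores at the $k$ small (almost weakly convex) boundaries and rescaling gives the core metric on $\#_i\Em_i^n$. For the socket statement one leaves one small boundary free and glues the large round strictly convex boundary to the socket $\Ex^n\setminus\Disk^n$ via Observation~\ref{WeCanGlue}. The point is that the strictly convex boundary is produced geometrically, not by perturbation.
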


\begin{proof} We begin by claiming there is a family of pRc metrics on $\Disk^n\setminus \left(\bigsqcup_k \Disk^n\right)$ so that the large boundary is strictly convex and isometric to $\Sphere_{R_\nu}^{n-1}$ and the $k$ disjoint small boundaries are almost weakly convex and isometric to $\bigsqcup_k \Sphere_1^{n-1}$. Note that the boundary conditions just described are identical to the boundary conditions of a pRc-concordance, but the topology is not a cylinder. Hence one might call this a \emph{pRc-cobordism.} 

An analysis of the family of metrics constructed on $\Sphere^n\setminus \left(\bigsqcup_k \Disk^n\right)$ of Theorem \ref{Docking} in \cite{Per1}, shows that it is isometric to a positively curved doubly warped product metric outside of a neighborhood of the boundary. In particular it is isometric to $dt^2+ \cos^2(t) d\theta^2 + f^2(t)ds_{n-2}$, where $t\in [0, \pi/2]$ and $f(t)\in[0,R]$. Moreover the boundary components are all fixed along an arbitrarily small neighborhood of circle corresponding to the set $t=0$. Thus it is possible to find a large geodesic ball in $\Sphere^n$ equipped with this doubly warped product metric that corresponds to the subset $\Disk^n\setminus \left(\bigsqcup_k \Disk^n\right)$ of $\Sphere^n\setminus \left(\bigsqcup_k \Disk^n\right)$ equipped with the family of metrics in Theorem \ref{Docking}, so that the large boundary of $\Disk^n\setminus \left(\bigsqcup_k \Disk^n\right)$ is isometric to the boundary of the geodesic ball. 

For dimension reasons, the boundary of the geodesic ball is isometric to a warped product $ds^2+ p^2(s)ds_{n-2}^2$ with $s\in[0,S]$. It is elementary to check that $p(s)$ is a concave down function, and hence the boundary has instrinsic positive curvature. It follows that the second fundamental form is positive definite. Note that the metric
$$ds^2 + \left((1-\lambda) p(s) +     \lambda S \sin(\pi s/S) \right)^2 ds_{n-2}^2,$$
is also positively curved for each $\lambda\in[0,1]$ and hence provides a pRc-isotopy from the boundary metric of this geodesic ball to a round metric. We may therefore apply Theorem \ref{FixBoundary} to produce a family of pRc metrics on $\Disk^n\setminus \left(\bigsqcup_k \Disk^n\right)$ that satisfy the desired boundary conditions. 

By Observation \ref{WeCanGlue} we can attach $\bigsqcup_{i=1}^k \left(\Em_i^n\setminus \Disk^n\right)$ with the core metrics to the almost weakly convex boundary of $\Disk^n\setminus \left(\bigsqcup_k \Disk^n\right)$. After rescaling by an appropriate constant, we have produced a core metric for $\#_i \Em_i^n$. 

In order to produce a socket metric on $\Ex^n\# \left(\#_i\Em^n_i\right)$ we repeat the above construction, but leave one of the boundary components of $\Disk^n\setminus \left(\bigsqcup_k \Disk^n\right)$ free. Then using Observation \ref{WeCanGlue} we attach the large boundary to $\Ex^n\setminus \Disk^n$. The result is a socket metric for $\Ex^n\#\left(\#_i \Em_i^n\right).$ 
\end{proof}

We did not claim Corollary \ref{Connect} in \cite{BLBNew}, because we did not think to apply Theorem \ref{FixBoundary} to the metrics of Theorem \ref{Docking}. With Corollary \ref{Connect} we can now explain how Theorems \ref{A} and \ref{ModuliList} follows from Theorems \ref{B} and \ref{SocketModuli} respectively. 

\begin{proof}[Proof of Theorem \ref{A}] By (1) of Theorem \ref{TheCores} $\Sphere^n$, $\CP^n$, $\HP^n$, and $\OP^2$ all admit core metric. By (2) of Theorem \ref{TheCores} we may construct a core metric on the total space of a linear $\Sphere^m$-bundle over a Riemannian manifold admitting a core metric, and by Corollary \ref{Connect} we may construct a core metric on the connected sum of any manifolds admitting a core metric. It follows that every manifold in $\Class$ of Definition \ref{ClassDefinition} admits a core metric. Note that a core metric is also a socket metric, hence the claim follows from Theorem \ref{B} for such manifolds. 

By Theorem \ref{TheSockets} $L^{4k-1}$ admits a socket metric for any lens space. As any $\Em^{4k-1}\in \Class$ admits a core metric, by Corollary \ref{Connect} there is a socket metric on $\L^{4k-1}\#\Em^{4k-1}$, hence the claim follows from Theorem \ref{B} for such manifolds. 
\end{proof}

\begin{proof}[Proof of Theorem \ref{ModuliList}] All of the manifolds listed in Theorem \ref{ModuliList} have socket metrics (as explained in the proof of Theorem \ref{A}). So by Theorem \ref{SocketModuli} it suffices to show that the manifolds listed satisfy the topological hypotheses. Note that products and connected sums preserves stable parallelizability, so any of the iterated products and connected sums of spheres is stably parallelizable and hence has vanishing Pontryagin classes. 

Note that for an odd $m$, $H^1(L(m;q_1,\dots, q_{2k}),\mathbf{Z}/2\mathbf{Z})=0$. If $a$ is a generator of $H^2(L(m;q_1,\dots, q_{2k}),\mathbf{Z})$ then the total Pontryagin class can be computed (see \cite[Corollary 3.2]{Szczar})
$$p(L(m;q_1,\dots, q_{2k})) = (1+q_1^2a^2)\cdots (1+q_{2k}^2a^2)  .$$
The requirement that all Pontryagin classes vanish is precisely that $p(L(m;q_1,\dots, q_{2k})) = 1$. Since $H^{4i}(L(m;q_1,\dots, q_{2k}),\mathbf{Z})=\mathbf{Z}/m\mathbf{Z}$, the coefficients of $a^{2i}$ in $p(L(m;q_1,\dots, q_{2k}))$ is precisely $e_{i}(q_1^2,\dots, q_{2k}^2) \mod m$. 
\end{proof}


\subsection{The new work}\label{SurgeryCores}


In the introduction, we explained that a manifold admits a socket metric if it is pRc and almost admits a round hemisphere. One could similarly describe a core metrics as a pRc metric that admits a round hemisphere. By Theorem \ref{GluingTheorem} a manifold that admits a socket metric, can be glued to $\Disk_1^n(\pi/2-\nu)$ for any $\nu>0$. Hence it ``almost admits a hemisphere,'' and the converse is obvious. Similarly if a manifold admits a core metric it can be glued to $\Disk_1^n(\pi/2)$ using Theorem \ref{GluingTheorem}. Morally, this means that manifolds that admit socket metrics behave like the round metric on a nearly global scale. Hence it is plausible that the comment about the sphere following the proof of Theorem \ref{surgerytheorem} may apply equally well to any spin manifold that admits a socket metric. This is precisely the idea that allows us to prove Theorem \ref{B} from the following lemma.  

\begin{lemma}\label{DiskLemma} For all $n\ge2$ and $m\ge 4$, given any $\rho>0$ there is a core metric on $\Sphere^{n+m}$ that admits an isometric embedding $\iota:\Sphere^{n}_\rho\times \Disk^m_1(\pi/8)\hookrightarrow \Disk^{n+m}$. 
\end{lemma}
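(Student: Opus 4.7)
The plan is to combine Sha-Yang surgery on a Riemannian product (which creates the isometric embedding) with the docking machinery developed in the proof of Corollary \ref{Connect} (which provides the strictly convex round boundary).

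First, I would start with the Riemannian product $\Sphere^n_\rho \times \Sphere^m_1$, which is pRc since both factors are round spheres of dimension at least $2$. Because $\Sphere^m_1$ has diameter $\pi$, I can choose two disjoint geodesic balls of radius $\pi/8$ in it, yielding two disjoint isometric embeddings $\iota, \iota' \colon \Sphere^n_\rho \times \Disk^m_1(\pi/8) \hookrightarrow \Sphere^n_\rho \times \Sphere^m_1$. Since $m \geq 4$, the surgery sphere $\iota'(\Sphere^n_\rho \times \{0\})$ has codimension at least $3$ and trivial normal bundle, so the surgery lemma of \cite{SY} (which underlies Theorem \ref{WraithMetric}) produces a pRc metric $g_0$ on the surgered manifold. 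The handle decomposition $\Sphere^{n+m} = \partial(\Disk^{n+1} \times \Disk^m) = (\Sphere^n \times \Disk^m) \cup (\Disk^{n+1} \times \Sphere^{m-1})$ identifies this manifold topologically with $\Sphere^{n+m}$. The embedding $\iota$ survives as an isometric embedding in $g_0$ since the surgery is supported in a small neighborhood of $\iota'$, and its image lies inside the topological disk $\Sphere^n \times \Disk^m$ of the decomposition.

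Second, I would upgrade $(\Sphere^{n+m}, g_0)$ to a core metric by grafting on a docking cobordism. Choose a codimension-zero subdomain $U \subset \Sphere^{n+m}$ disjoint from $\iota$, diffeomorphic to $\Disk^{n+m}$, and prepare its boundary to be almost weakly convex and isometric to $\Sphere^{n+m-1}_1$. Attach the pRc-cobordism on $\Disk^{n+m} \setminus \Disk^{n+m}$ built in the proof of Corollary \ref{Connect}, which has one almost weakly convex round boundary and one strictly convex round boundary, using Observation \ref{WeCanGlue}. The result is a pRc metric on $\Sphere^{n+m} \setminus \Disk^{n+m}$ with strictly convex round boundary containing $\iota$ inside its interior topological disk, i.e., a core metric of the required form.

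The hard part will be preparing the boundary $\partial U$ to be simultaneously round and almost weakly convex, since the geodesic spheres of the Riemannian product $\Sphere^n_\rho \times \Sphere^m_1$ are not round and excision of a naive product-geodesic ball will not yield the desired boundary. My approach is to mimic the proof of Corollary \ref{Connect}: deform $g_0$ in a region disjoint from $\iota$ to install a doubly warped product structure of the form $dt^2 + \cos^2(t)\, d\theta^2 + f^2(t)\, ds^2_{n+m-2}$ like that used in the docking metrics of Theorem \ref{Docking}, extract a geodesic ball whose warped-sphere boundary has positive intrinsic curvature, and exhibit a pRc-isotopy from this warped boundary to the round sphere via a linear interpolation between the warping function $f(t)$ and the sinusoidal warping of the round metric. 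Theorem \ref{FixBoundary} then replaces the boundary with the round metric while preserving pRc in the interior, at which point the docking cobordism can be attached via Theorem \ref{GluingTheorem} to produce the core boundary.
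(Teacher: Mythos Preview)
Your first step---Sha--Yang surgery on $\Sphere^n_\rho\times\Sphere^m_1$ to produce a pRc metric $g_0$ on $\Sphere^{n+m}$ retaining the embedding $\iota$---is correct and is exactly how the paper begins.

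The second step, however, has a genuine gap. Your plan is to excise a disk $U$ disjoint from $\iota$, arrange $\partial(\Sphere^{n+m}\setminus U)$ to be almost weakly convex and round, and then attach the docking annulus from Corollary~\ref{Connect}. But the gluing in Observation~\ref{WeCanGlue} requires one side strictly convex and the other almost weakly convex. If $\partial(\Sphere^{n+m}\setminus U)$ is only almost weakly convex, you must glue it to the \emph{strictly convex} (large) end of the annulus; the remaining boundary is then the almost weakly convex (small) end, so you obtain a socket metric, not a core metric. Conversely, if you could make $\partial(\Sphere^{n+m}\setminus U)$ strictly convex and round, the annulus is unnecessary---you already have the core metric. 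So the docking cobordism is either pointed the wrong way or redundant.

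The deeper issue is finding any disk containing $\iota$ with convex boundary. Your third paragraph proposes to ``install'' a doubly warped structure $dt^2+\cos^2(t)\,d\theta^2+f^2(t)\,ds_{n+m-2}^2$ somewhere disjoint from $\iota$, but the Sha--Yang metric $g_0$ is \emph{already} globally a doubly warped product, namely $dr^2+h^2(r)\,ds_n^2+f^2(r)\,ds_{m-1}^2$, with different fiber dimensions. The image of $\iota$ occupies the entire $\Sphere^n$-factor over an interval in $r$, so it is not contained in any small geodesic ball, and you cannot simply excise a complementary region with convex boundary by taking a geodesic ball. The paper handles this by working directly with the existing warped structure: it further splits $\Sphere^{m-1}$ as $[0,\pi]\times\Sphere^{m-2}$ and defines a region $\Disk(\xi)=\{x\le\xi(r)\}$ via a carefully chosen profile $\xi(r)$, chosen so that $\Disk(\xi)\cong\Disk^{n+m}$ contains $\iota$ and has weakly convex boundary. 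The restriction of $g_0$ to $\partial\Disk(\xi)$ is itself a doubly warped product of the same Sha--Yang type, and a dedicated lemma (Lemma~\ref{paths}) shows it is pRc-isotopic to round via a two-stage convex interpolation of the warping functions; Theorem~\ref{FixBoundary} then finishes. Your linear-interpolation idea is in the right spirit, but the actual argument needs the specific concavity structure of $h$ and $f$ coming from Sha--Yang, which is what Lemma~\ref{paths} encodes.
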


Assuming Lemma \ref{DiskLemma} we can complete the proofs of our main theorems. 

\begin{proof}[Proof of Theorem \ref{B}] As explained above in Section \ref{pRcsums}, it is always possible to glue together a socket metric and a core metric. If $\Em^{4k-1}$ admits a socket metric, then by attaching the core metric of Lemma \ref{DiskLemma} it is possible to find an embedding as in Theorem \ref{surgerytheorem} for any $\rho>0$. The claim follows. 
\end{proof}

\begin{proof}[Proof of Theorem \ref{SocketModuli}] As explained above in Section \ref{pRcsums}, it is always possible to glue together a socket metric and a core metric. If $\Em^{4k-1}$ admits a socket metric, then by attaching the core metric of Lemma \ref{DiskLemma} it is possible to find an embedding as in Theorem \ref{SpinModuli} for any $\rho>0$. The claim follows. 
\end{proof}

In order to prove Lemma \ref{DiskLemma} we will consider the family of metrics constructed on $\Sphere^{n+m}$ discussed following the proof of Theorem \ref{surgerytheorem} using the work of \cite{SY}. Specifically we must find an embedding of $\Disk^{n+m}$ that contains the desired embedding on its interior and has convex boundary. This is essentially obvious once one understands the construction in \cite{SY}. While this much is straightforward, the metric so constructed restricted to the boundary is not round. Luckily the metric restricted to the boundary $\Sphere^{n+m-1}$ is essentially the same as the metric constructed $\Sphere^{n+m}$, and so will be intrinsically pRc. Using Theorem \ref{FixBoundary} it is possible to deform the boundary to become round provided it is connected via a path of pRc metrics to the round metric. The following Lemma claims that this is possible for the particular sort of metric constructed on $\Sphere^{n+m-1}$ in \cite{SY}. 

\begin{lemma}\label{paths} Let $k=dr^2+ h^2(r)ds_n^2 + f^2(r) ds_m^2$ with $r\in[0,R]$ be a pRc doubly warped product metric on $\Sphere^{n+m+1}$. Suppose that there is an $0<R_1<R_2<R$ such that $f''(r)>0$ for $r<R_1$ and $f''(r)<0$ for $r>R_1$ and $h''(r)<0$ for $r<R_2$ and $h(r)\equiv \rho$ for $r\ge R_2$. Then $k$ is connected via a path of pRc metrics to a round metric. 
\end{lemma}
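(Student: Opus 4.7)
The plan is to build a continuous family $(h_s, f_s)_{s\in[0,1]}$ of warping functions whose associated doubly warped product metrics $k_s = dr^2 + h_s^2(r)\,ds_n^2 + f_s^2(r)\,ds_m^2$ are all pRc, with $k_0 = k$ and $k_1$ the round metric $dr^2 + \sin^2(r)\,ds_n^2 + \cos^2(r)\,ds_m^2$ on $[0,\pi/2]$. Since a constant rescaling of the metric preserves pRc as well as the qualitative hypotheses on $h''$ and $f''$, I would first rescale so that $R = \pi/2$; the resulting warping functions then satisfy the same boundary conditions as $(\sin, \cos)$, namely $h(0) = f(\pi/2) = 0$, $h'(0) = -f'(\pi/2) = 1$, and $f'(0) = h'(\pi/2) = 0$.

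I would carry out the deformation in three stages. In Stage~I, I would eliminate the plateau $h \equiv \rho$ on $[R_2,\pi/2]$ by bending it downward through a family of strictly concave functions agreeing with the original $h$ on $[0,R_2]$, keeping $f$ fixed. Since $h' \equiv 0$ on the plateau, the cross term $-mh'f'/(hf)$ in the $\Sphere^n$-direction Ricci vanishes initially, so a sufficiently small $C^2$-perturbation yields only a small correction to each of the three Ricci eigenvalues
\begin{align*}
A &= -n h''/h - m f''/f,\\
B &= -h''/h + (n-1)(1-(h')^2)/h^2 - m h'f'/(hf),\\
C &= -f''/f + (m-1)(1-(f')^2)/f^2 - n h'f'/(hf),
\end{align*}
which is absorbed by the strict positivity holding at $s=0$. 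In Stage~II I would eliminate the convex region of $f$ on $[0,R_1]$ by decreasing $f''$ until $f'' \le 0$ throughout, keeping $h$ fixed.

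Stage~III is the straight-line homotopy $h_s(r) = (1-s'')\tilde h(r) + s''\sin(r)$ and $f_s(r) = (1-s'')\tilde f(r) + s''\cos(r)$ with $s'' = 3s-2$, where $(\tilde h,\tilde f)$ is the output of Stages~I--II. Because $\tilde h$ and $\tilde f$ are strictly concave on $(0,\pi/2)$ with the correct boundary behaviour, so is every convex combination with $(\sin,\cos)$; in particular $h_s'' < 0$ and $f_s'' < 0$, whence the leading terms in $A, B, C$ are strictly positive. Concavity further forces $h_s' \in [0,1]$ and $f_s' \in [-1,0]$, so $(n-1)(1-(h_s')^2)/h_s^2 \ge 0$, $(m-1)(1-(f_s')^2)/f_s^2 \ge 0$, and the cross term $-h_s'f_s'/(h_sf_s)$ is non-negative. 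Hence $A, B, C > 0$ throughout Stage~III.

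The main obstacle is Stage~II: in the region $r < R_1$ the original $f'' > 0$ forces the term $-f''/f$ in the formula for $A$ to be negative, and pRc is maintained only through the competing term $-nh''/h$. My plan is to decrease $f$ itself in a controlled way along with $f''$, so that the positive term $(m-1)(1-(f')^2)/f^2$ in $C$ grows enough to absorb any loss from the perturbation, and the radial direction $A$ remains positive because $-nh''/h$ is bounded away from zero (uniformly on compact subsets of $(0,\pi/2)$) by the hypothesis $h'' < 0$ on $[0,R_2]$ together with the improvement made in Stage~I.
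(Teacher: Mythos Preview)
Your overall strategy---make $f$ concave, then straight-line homotope to $(\sin,\cos)$---is the paper's strategy, but two of your stages have real problems.

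\textbf{Stage I is impossible as written, and also unnecessary.} You ask for $\tilde h$ to agree with $h$ on $[0,R_2]$ and to be strictly concave on $[R_2,\pi/2]$. But $h\equiv\rho$ on $[R_2,\pi/2]$ forces $h'(R_2)=0$, and smoothness of the doubly warped product at the endpoint forces $\tilde h'(\pi/2)=0$; a strictly concave function cannot have $\tilde h'(R_2)=\tilde h'(\pi/2)$. Your ``sufficiently small $C^2$-perturbation'' argument only justifies a small deformation, which cannot turn a flat plateau into a strictly concave arc. The good news is that Stage I is not needed: in Stage III the convex combination $(1-s'')h+s''\sin r$ is already strictly concave for every $s''>0$ even if $h$ retains its plateau, and at $s''=0$ you can check $A,B,C>0$ on $[R_2,\pi/2]$ directly from $h'=h''=0$. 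The paper does exactly this: it never deforms $h$ separately.

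\textbf{Stage II is where the content is, and your sketch does not amount to a construction.} The paper takes the explicit convex combination $f_\lambda=(1-\lambda)f+\lambda f_1$ with $f_1$ strictly concave and normalized by $f_1(R_1)=f(R_1)$, keeping $h$ (plateau and all) fixed, and then runs a case analysis on $A,B,C$ region by region. Keeping the plateau is what makes the $B$-check trivial on $[R_2,R]$ (it collapses to $(n-1)/\rho^2$ independently of $f_\lambda$), and the normalization $f_1(R_1)=f(R_1)$ is what drives the monotonicity arguments on $[0,R_1]$. Your plan to ``decrease $f$ itself in a controlled way along with $f''$ so that $(m-1)(1-(f')^2)/f^2$ grows'' does not specify a family, does not address the boundary conditions $f'(0)=0$, $f'(\pi/2)=-1$, and would in any case be harder to analyse after a Stage~I modification of $h$, since you would have lost the plateau that makes $B$ automatic. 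Drop Stage~I, and in Stage~II write down the explicit convex combination and verify $A,B,C$ case by case; then your Stage~III goes through as you wrote it.
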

\begin{proof} Let $f_1(r)$ be any function defined on $[0,R]$ satisfying the same boundary conditions as $f(r)$ (see \cite[Section 1.4.5]{Pet}), suppose moreover that $f_1''(r)<0$ and $f_1(R_1)=f(R_1)$. Let $f_\lambda(r)$ denote the convex combination of $f_0(r)$ and $f_1(r)$ for $\lambda\in [0,1]$. Let $k_\lambda=dr^2+h^2(r) ds_n^2  + f_\lambda^2(r)ds_m^2$. We claim that $g_\lambda$ has pRc for each $\lambda\in[0,1]$. Let $\partial_r$, $\theta_i\in T\Sphere^n$ and $\phi_i\in T\Sphere^m$ be coordinate frame for $g_\lambda$. From \cite[4.2.4]{Pet}, we have
\begin{align}
\label{rRicci} Ric_{k_\lambda}(\partial_r,\partial_r) & = -n\frac{\ddot{h}(r)}{h(r)} -m\frac{\ddot{f}_\lambda(r)}{f_\lambda(r)}.\\
\label{hRicci} \Ric_{k_\lambda}(\theta_1,\theta_2) & = (n-1) \frac{1- \dot{h}^2(r)}{h^2(r)} -  \frac{\ddot{h}(r)}{h(r)} -m\frac{\dot{f}_\lambda(r)\dot{h}(r)}{f_\lambda(r)h(r)}\\
\label{fRicci} \Ric_{k_\lambda}(\phi_1,\phi_2) & = (m-1) \frac{1- \dot{f}_\lambda^2(r)}{f_\lambda^2(r)} - \frac{\ddot{f}_\lambda(r)}{f_\lambda(r)} -n\frac{\dot{f}_\lambda(r)\dot{h}(r)}{f_\lambda(r)h(r)}.
\end{align}

Let us consider first (\ref{rRicci}). For $r>R_1$, $h''(r)\le 0$ and $f_\lambda''(r)<0$ hence (\ref{rRicci}) is positive for all $\lambda\in[0,1]$. For $r\le R_2$, $h''(r)$ is constant with respect to $\lambda$ and $f_\lambda''(r)$ is decreasing with respect to $\lambda$, hence (\ref{rRicci}) is increasing with respect to $\lambda$. Since (\ref{rRicci}) is assumed to be positive at $\lambda=0$, we conclude that (\ref{rRicci}) is positive for all $\lambda\in[0,1]$. 

Let us consider next (\ref{hRicci}). For $r\ge R_2$, we have that (\ref{hRicci}) is given by $(n-1)/\rho^2$ and is hence positive regardless of $\lambda$. For $r\le R_1$, let us consider (\ref{hRicci}) when $\lambda=0$, which by assumption is positive. Note that $f_\lambda'(r)\ge 0$ and $h'(r)>0$, hence $\dot{f}_\lambda(r)\dot{h}(r)>0$. It follows that the first two terms in (\ref{hRicci}) dominate the last term. Note that $\dot{f}\lambda(r)$ is decreasing while $f_\lambda(r)$ is increasing with respect to $\lambda$, hence $\dot{f}_\lambda(r)/f_\lambda(r)$ is decreasing. It follows that (\ref{hRicci}) is increasing with respect to $\lambda$ and hence is positive for all $\lambda\in[0,1]$. For $R_1<r<R_2$, let us again consider $\lambda = 0$. In this case $\dot{h}(r)>0$, but it is possible that $\dot{f}(r)$ changes sign at a single point $r_0$ from positive to negative. For $r\ge r_0$, the last term in (\ref{hRicci}) is nonnegative. Since $\dot{f}_1(r)<0$, it follows that this term is nonnegative for all $\lambda\in[0,1]$, and hence (\ref{hRicci}) is positive for all $\lambda\in [0,1]$. For $R_1<r<r_0$, the argument proceeds identically to the case of $r\le R_1$. 

Finally, let us consider (\ref{fRicci}). Let us again begin by considering $\lambda=0$. Note that there is exactly one $r_0>R_1$ for which $\dot{f}_0(r)$ changes sign from positive to negative. For $r\le R_1$, note that the last two terms in (\ref{fRicci}) are both nonpositive. For $R_1< r \le r_0$ the last term is negative and the middle term is positive. For $r>r_0$, the second two terms are positive. Because (\ref{fRicci}) is assumed to be positive, we deduce that $\dot{f}_0(r)<1$ for $r\le R_1$. Because $\ddot{f}_0(r)<0$ for $r>R_1$, and $\dot{f}_0(R)=-1$ we deduce that $-1\le \dot{f}_0(r)<1$. For a fixed $r>r_0$, we have that $\dot{f}_\lambda(r)$ is decreasing and hence $\dot{f}(r)^2\le 1$ for all $\lambda$. As the other two terms in (\ref{fRicci}) are positive, it follows that (\ref{fRicci}) is positive for all $\lambda\in[0,1]$. For a fixed $r\le r_0$, $\dot{f}^2(r)$ will decrease initially until $\dot{f}_\lambda(r)=0$ for some $\lambda$. At this same $\lambda$, the last two terms in (\ref{fRicci}) become positive. Hence (\ref{fRicci}) will become positive beyond this $\lambda$. Before this $\lambda$ we note that the first term in (\ref{fRicci}) was increasing, while the second two were decreasing with respect to $\lambda$. 

We therefore have shown that $k_0$ and $k_1$ are connected via a path of pRc metrics. It remains to check that $k_1$ is connected via a path of pRc metrics to a round metric. If we let $h_2(r)= R \sin (r/R)$ and $f_2(r) =R \cos (r/R)$, define $h_\lambda$ and $f_\lambda$ as the convex combination of $h(r)$ with $h_2(r)$ and $f_1(r)$ with $f_2(r)$ for $\lambda\in[1,2]$, and let $k_\lambda = dr^2 + h_\lambda^2(r) ds_n^2+f_\lambda^2(r)ds_m^2$. It is elementary to check that (\ref{rRicci}), (\ref{hRicci}), and (\ref{fRicci}) are positive as $h_\lambda(r)$ and $f_\lambda(r)$ are strictly concave down for all $\lambda>1$. 
\end{proof}

We are now ready to finish our proof of Lemma \ref{DiskLemma}. 

\begin{proof}[Proof of Lemma \ref{DiskLemma}]  For any $\rho>0$, consider the pRc Riemannian manifold $\Sphere_\rho^n\times \Sphere_1^m$. Define two embeddings $\iota,\iota': \Sphere_\rho^n\times \Disk_1^m(\pi/8) \hookrightarrow \Sphere_\rho^n\times \Sphere_1^m$ given by the identity on the first factor and by the exponential map centered at two points $x_0$ and $x_1$ a distance of $\pi/2$ apart in $\Sphere_1^m$. By \cite[Lemma 1]{SY}, we may for $\rho$ sufficiently small, perform a surgery on $\iota'$ to produce a pRc metric on
$$\left[ \left(\Sphere^n\times \Sphere^m \right)\setminus \iota'\left(\Sphere^n\times \Disk^m\right) \right] \cup_\iota \left[\Disk^{n+1} \times \Sphere^{m-1}\right]=\Sphere^{n+m}.$$
Moreover this metric takes the form $k:=dr^2 + h^2(r) ds_{n}^2 + f^2(r)ds_{m-1}^2$, where $r\in[0, R]$ agrees with the radial coordinate of $\Sphere^m\setminus \Disk^m$ in this first factor and the radial coordinate of $\Disk^{n+1}$ in the second factor. Moreover the interval can be subdivided into $0<R_1<R_2<R_3<R$ such that
\begin{enumerate}
\item\label{inflection} $f(r)$ has a unique inflection point at $r=R_1$, is strictly concave up for $r<R_1$ and strictly concave down for $r>R_1$. 
\item\label{isconcave} $h(r)$ is strictly concave down for $r<R_2$ and $h(r)= \rho$ for $r\ge R_2$.
\item\label{isround} $f(r) =  \cos(r-R_3+ \pi/8)$ for $r\ge R_3$.  
\end{enumerate}
In addition to all this, there is still an isometric embedding $\iota:\Sphere_\rho^n\times \Disk_1^m(\pi/8)\hookrightarrow \Sphere^{n+m}$. Our first goal is to identify an embedding $\Disk^{n+m}\hookrightarrow \Sphere^{n+m}$ that contains the image of $\iota$ in its interior and that has weakly convex boundary. 

Implicit to the definition of the metric $k$, we are identifying $\Sphere^{n+m}$ as a quotient of $[0,R]\times \Sphere^n\times \Sphere^{m-1}$. If we further decompose $\Sphere^{m-1}=[0,\pi]\times \Sphere^{m-2}$, then the metric splits as
 $$k= dt^2+ h^2(t)ds_n^2 + f^2(t)dx^2 + \cos^2( x) f^2(t) ds_{n-2}^2.$$
 Given a function $\xi:[0,R]\rightarrow [0,\pi]$, we define the subset 
 $$\Disk(\xi):=\{(r,p,x,q)\in[0,R]\times \Sphere^n\times [0,\pi]\times \Sphere^{m-2}: x\le \xi(r)\}\subseteq \Sphere^{n+m}.$$ 
 Intersecting $\Disk(\xi)$ with the two handles gives
 $$\Disk(\xi) =\left[ \Sphere^n\times \Disk_+^m\right] \cup_{\Sphere^n\times \Disk^{m-1}} \left[\Disk^{n+1}\times \Disk^{m-1} \right].$$
 From this it is clear that $\Disk(\xi)$ is homeomorphic to $\Disk^{n+m}$. We will choose $\xi(r)$ so that $\Disk(\xi)$ transparently has smooth boundary. 
 
For $r<R-\pi/4$, we choose $\xi(r)=c$ to be constant. Recall that $[R_3,R]\times \Sphere^n\times \Sphere^{m-1}$ equipped with $k$ is isometric to $\Sphere_\rho^n\times \left(\Sphere_1^m\setminus \Disk_1^m(\pi/8)\right)$. For $r>R-\pi/8$ we want to choose $\xi(r)$ so that this portion of $D(\xi)$ agrees with a geodesic ball of radius $\pi/4$ centered at $x_1$. 
We will choose $c$ sufficiently small, so that it is possible to extend $\xi(r)$ smoothly so that $\xi(r)$ is concave down for all $r>R-\pi/4$. With this choice of $\xi(r)$, the boundary of $D(\xi)$ is clearly smooth. 

What remains is to check that the boundary is weakly convex. For $r< R- \pi/4$, as $\xi(r)=c$ we have that the inward unit normal of $\Sphere(\xi):=\partial \Disk(\xi)$ is given by $-\partial_x/f(r)$ and a frame for $T\Sphere(\xi)$ is spanned by $\partial_t$, $\theta_i\in T\Sphere^n$, and $\phi_i\in T\Sphere^{m-2}$. In this frame all principal curvatures are zero other than $\2(\phi,\phi)= \cot c /f(r)$. For $r>R-\pi/4$, it is routine to check that the principal curvatures tangent to $\Sphere^{m-1}$ are positive if $\xi(r)$ is concave down. 

We claim that the metric $k$ restricted to $\Sphere(\xi)=\Sphere^{n+m-1}$ takes the form 
$$k|_{\Sphere(\xi)}=ds^2+ h^2(s)ds_n^2 + p^2(s)ds_{n-2}^2.$$
Where $s\in [0,R']$ where $p(s)= \cos(c)f(s)$ for $s<R-\pi/4$ and $p''(s)<0$ for all $\ge R_3$. Moreover, $k|_{\Sphere(\xi)}$ has pRc. Because $\xi(r)=c$ for $r<R-\pi/4$, it is clear that $k|_{\Sphere(\xi)}$ takes this form. That $k|_{\Sphere(\xi)}$ has pRc follows from the fact that scaling $f(r)$ by a constant less than $1$ in a doubly warped product metric has the result of increasing or leaving fixed all sectional curvatures. For $r\ge R-\pi/4$, we have that
$$k|_{\Sphere(\xi)} = (1+\cos^2(\xi(r))\dot{\xi}^2(r))dr^2+ h^2(r) ds_n^2 + \cos^2(\xi(r))f^2(r) ds_{m-2}^2 = ds^2 +h^2(r) ds_n^2 + p^2(s) ds_{m-2}^2.$$
We have explained that $\Sphere(\xi)\cap \Sphere^{m-1}$ is strictly convex and hence has positive sectional curvature. It follows that $p''(s)<0$ as claimed. As $h(s)=\rho$ for such $s$, it is easy to verify that the metric has pRc. 

By Lemma \ref{paths}, $k|_{\Sphere(\xi)}$ is connected via a path of pRc metrics to a round metric. By Theorem \ref{FixBoundary}, there is a pRc metric on $\Disk^{n+m}$ with round and convex boundary that still admits the desired embedding. This construction works for any $\rho$, as desired. 
\end{proof}


\section{Further Applications}\label{AppsSection}



\subsection{The space of core metrics}\label{CoresSection}


For a smooth manifold $\Em^n$ fix $\Disk^n\subseteq \Em^n$. We define $\Cores(\Em^n)\subseteq \pRc(\Em^n\setminus \Disk^n)$ to be the space of core metrics for $\Em^n$. Note that the action of diffeomorphisms of $\Em^n\setminus \Disk^n$ on $\pRc(\Em^n\setminus \Disk^n)$ leaves on $\Cores(\Em^n)$ fixed, and hence we may define the moduli space $\MCores(\Em^n)$. Much of our own work has been in showing that $\Cores(\Em^n)$ is nonempty for manifolds where $\pRc(\Em^n)$ is known to be nonempty. The end goal of this to show that $\pRc(\#_k \Em^n )$ is nonempty. In this section we hope to convince the reader that $\Cores(\Em^n)$ is an interesting space in its own right. 

In Corollary \ref{Connect} we described a way of combining core metrics on $\Em_i^n$ to a core metric on $\#_i \Em_i^n$. When we restrict ourselves to core metrics on $\Sphere^n$, we note that this procedure gives a way to take two core metrics on $\Sphere^n$ and produce a new core metric on $\Sphere^n$. In other words there is a binary operation
$$\mu: \Cores(\Sphere^n)\times \Cores(\Sphere^n) \rightarrow \Cores(\Sphere^n).$$
While there were many choices hidden in the proof of Corollary \ref{Connect}, we believe it is possible to make $\mu$ a well-defined, continuous product. This construction is morally similar to the construction of a product structure on $\psc(\Em^n)$ in \cite{Walsh} and more recently on the space of all metrics of $k$-pRc on $\Sphere^n$ in \cite{WW}. In both cases it was shown that this product structure comes from an $n$-fold loop space structure. We conjecture that the product structure of $\Cores(\Sphere^n)$ behaves similarly. 

\begin{conj}\label{Conjecture} $\Cores(\Sphere^n)$ equipped with $\mu$ is an associative $H$-space and is homotopy equivalent to a loop space. 
\end{conj}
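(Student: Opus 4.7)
The plan is to model the argument on Walsh's construction of the $H$-space structure on $\psc(\Sphere^n)$ and on the more recent Walsh--Wraith treatment of the space of metrics of positive $k$-intermediate Ricci curvature, adapting those ideas to the core-metric setting. The first step is to upgrade the binary operation $\mu$ sketched in Corollary \ref{Connect} to a genuinely well-defined continuous map. Concretely, I would fix once and for all a canonical one-parameter family of docking metrics $D_{2,\nu}$ on $\Sphere^n \setminus (\Disk^n \sqcup \Disk^n)$ from Theorem \ref{Docking}, together with a fixed parametrization of their two almost weakly convex boundary components by $\Sphere_1^{n-1}$ and a fixed rescaling convention returning the large convex boundary to $\Sphere_1^{n-1}$. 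Then $\mu(g_1,g_2)$ is defined as the metric produced by Theorem \ref{GluingTheorem} after gluing the cores corresponding to $g_1$ and $g_2$ into $D_{2,\nu(g_1,g_2)}$, where $\nu$ is chosen continuously in $(g_1,g_2)$ so that Observation \ref{WeCanGlue} applies. Continuity of $\mu$ reduces to continuity of the gluing and of the conformal/rescaling deformations used in Theorem \ref{FixBoundary}, both of which behave well in the $C^\infty$-topology.

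Next I would verify the $H$-space axioms up to homotopy. For homotopy-associativity, compare $\mu(\mu(g_1,g_2),g_3)$ and $\mu(g_1,\mu(g_2,g_3))$ to a common third gluing using a three-holed docking metric $D_3$, and interpolate by a path of docking metrics on $\Sphere^n \setminus (\Disk^n \sqcup \Disk^n \sqcup \Disk^n)$ that degenerates $D_3$ into either nested $D_2$ configuration; the existence of such a path is essentially the moduli of configurations of three disjoint geodesic balls in a round sphere, which is connected. For a homotopy identity, select a distinguished ``trivial'' core metric $e$ (a standard deformation of the round hemisphere, obtainable from Lemma \ref{DiskLemma} with $\rho$ taken in a limit) and show $\mu(e,g)\simeq g$ by sliding one boundary component of $D_2$ into a collar in $e$ via Theorem \ref{IsotopyConcordance}. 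Homotopy-commutativity follows from the $\Sphere^n$-symmetry swapping the two holes in $D_2$, which is realized by an isometry of the docking metric.

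For the loop space assertion, the natural strategy is to promote the above data to an action of the little $n$-disks operad on $\Cores(\Sphere^n)$: each configuration of $k$ disjoint round disks in $\Disk^n\subseteq\Sphere^n$ produces, via Theorem \ref{Docking} applied to that configuration, a $k$-ary operation on $\Cores(\Sphere^n)$, and the space of configurations varies continuously. Compatibility of composition follows from the associativity homotopies above extended to higher arity. If this can be made rigorous, May's recognition principle identifies $\Cores(\Sphere^n)$ with a based loop space on some delooping, as conjectured; if one only obtains an $A_\infty$-structure, Stasheff's theorem still gives a loop space provided the resulting monoid $\pi_0\Cores(\Sphere^n)$ is a group. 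The main obstacle I anticipate is precisely this last grouplike property: it is completely unclear whether a core metric on $\Sphere^n$ has a $\mu$-inverse in $\pi_0$, since there is no evident geometric ``cancellation'' procedure for pRc connected sums, and indeed the analogous question for $\psc(\Sphere^n)$ required genuinely nontrivial input. A secondary technical obstacle is the fact that almost weakly convex boundaries come as families indexed by $\nu>0$ (Definition \ref{AlmostWeak}) rather than single metrics, which forces one to work in an appropriate homotopy colimit and makes the operadic composition subtle to formalize.
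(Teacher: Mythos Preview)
The statement you are attempting to prove is labeled as a \emph{Conjecture} in the paper, and the paper gives no proof of it whatsoever. There is nothing to compare your proposal against: the author explicitly presents this as an open problem, motivated by the analogy with Walsh's $H$-space structure on $\psc(\Sphere^n)$ and the Walsh--Wraith work on $k$-positive Ricci curvature, but does not claim to have established either the $H$-space structure or the loop space structure on $\Cores(\Sphere^n)$.

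Your outline is a reasonable sketch of how one might \emph{attack} the conjecture, and indeed it follows exactly the template the paper points to. But you should be aware that what you have written is a research program, not a proof. You yourself flag the two serious gaps: (i) the grouplike condition on $\pi_0\Cores(\Sphere^n)$, without which neither Stasheff nor May gives a loop space, and for which there is no known pRc analogue of the Gromov--Lawson cancellation that makes the psc case work; and (ii) the fact that the docking construction of Theorem~\ref{Docking} produces a $\nu$-indexed family rather than a single metric, so that even making $\mu$ a continuous map with strict operadic compatibility is delicate. Neither of these is resolved in your proposal, and the paper does not resolve them either---that is precisely why the statement is recorded as a conjecture.
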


In \cite{Walsh} the product structure is essentially given by the psc connected sum of \cite{GL}, and so the product is defined for any psc metric on the sphere. There is no general way to form a pRc connected sum. To our knowledge Corollary \ref{Connect} is the most general technique for forming pRc connected sums. And as it requires the presupposition of core metrics, Conjecture \ref{Conjecture} represents a natural extension of the ideas of \cite{Walsh} to the study of pRc metrics.  
 
While we have no way of constructing a product structure on the space $\pRc(\Sphere^n)$, one plausible way to approach this is to study the following embedding. 
\begin{prop} There is an embedding $\kap:\Cores(\Em^n)\hookrightarrow \pRc(\Em^n)$. 
\end{prop}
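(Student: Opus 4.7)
The plan is to define $\kap$ by capping off the boundary of a core metric with a fixed round hemisphere and applying Perelman's Gluing Theorem (Theorem \ref{GluingTheorem}) to produce an honest smooth pRc metric on $\Em^n$. Fix once and for all the round hemisphere $(\Disk^n, h_+)$ of geodesic radius $\pi/2$, whose boundary is isometric to $\Sphere_1^{n-1}$ and is totally geodesic, so $\2_{h_+} = 0$. For each $g \in \Cores(\Em^n)$, choose a boundary isometry $\Phi_g : \partial(\Em^n \setminus \Disk^n) \to \partial \Disk^n$; since $\2_g > 0$ strictly, we have $\2_g + \Phi_g^* \2_{h_+} = \2_g > 0$, so Theorem \ref{GluingTheorem} produces a pRc metric on $(\Em^n \setminus \Disk^n) \cup_{\Phi_g} \Disk^n = \Em^n$, which we declare to be $\kap(g)$.

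First I would verify that this construction can be made canonical so that $g \mapsto \kap(g)$ is continuous in the $C^\infty$ topology. Perelman's proof of Theorem \ref{GluingTheorem} proceeds by interpolating the two metrics across a collar of the gluing hypersurface using a fixed cutoff function; by fixing the collar width and the cutoff once and for all, the resulting interpolation depends smoothly on the input data through $g$'s values in a neighborhood of the gluing site. Moreover, I would arrange the perturbation to take place entirely on the hemisphere side: the core metric is left untouched, and $h_+$ is deformed in a thin collar so that its modified boundary matches $g$ to infinite order. This is possible because $\2_g > 0$ strictly and $h_+$ has positive intrinsic curvature to absorb the needed bending, while the target boundary data is round in the induced metric.

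Given this one-sided construction, $\kap(g)\big|_{\Em^n \setminus \Disk^n} = g$ exactly. Hence the restriction map $\kap(g) \mapsto g$ is a continuous left inverse to $\kap$ on its image, which simultaneously establishes injectivity and shows that $\kap$ is a topological embedding onto its image with respect to the $C^\infty$ topology. The fact that the image lands in $\pRc(\Em^n)$ is automatic from Theorem \ref{GluingTheorem}.

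The main obstacle is the canonical, one-sided realization of Theorem \ref{GluingTheorem}. As stated, Theorem \ref{GluingTheorem} is a pure existence result, and Perelman's perturbation generally affects both sides of the gluing. Constructing a continuous family of pRc caps $(\Disk^n, h_+^g)$ that each match $g$ to infinite order at the boundary, agree with $h_+$ outside an arbitrarily small collar, and depend continuously on $g$ is the key technical step. The strict positivity of $\2_g$ furnishes a uniform lower bound that should allow the deformation to remain pRc, but verifying that all Ricci terms of the interpolated metric stay positive throughout the family, and that the construction depends smoothly on the prescribed Taylor data of $g$ at the boundary, is where the real work lies.
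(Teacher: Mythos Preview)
Your overall strategy---cap the core metric with a pRc disk and apply Perelman's gluing---is exactly the paper's approach, but the execution diverges at the point you flag as the main obstacle. The paper does not attempt a one-sided gluing at all. Instead it invokes \cite[Theorem~2]{BWW}, a parametrized refinement of Theorem~\ref{GluingTheorem} in which the glued metric depends \emph{continuously} on the two input metrics and the boundary isometry. With that tool in hand the paper defines $\kap(g)$ as the BWW-glued metric, using $g$ on $\Em^n\setminus\Disk^n$ and a cap $h(\nu(g))$ on $\Disk^n$, where $\nu(g)=\tfrac{1}{2}\inf_{|E|_g=1}\2_g(E,E)$ is continuous and $h(\nu)$ is a fixed one-parameter family of spherical caps with boundary $\Sphere_1^{n-1}$ and $\2_{h(\nu)}=-\nu$. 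Continuity of $\kap$ then comes for free from BWW together with the continuity of $g\mapsto\nu(g)$; nothing needs to be confined to the disk side.

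Your one-sided version is strictly harder than what the proposition requires, and it is not clear it can be carried out. Matching $g$ to infinite order on the cap side is an extension problem for pRc metrics with a \emph{prescribed full boundary jet}: the higher-order normal derivatives of a core metric at $\Sphere^{n-1}$ are entirely unconstrained by Definition~\ref{Core}, so there is no a~priori reason a pRc metric on $\Disk^n$ realizing those jets exists, let alone one that agrees with $h_+$ outside a collar and varies continuously in $g$. The strict positivity of $\2_g$ controls only first-order data and gives you no leverage on the higher jets. What your route would buy, if it worked, is a clean proof that $\kap$ is a topological embedding via the restriction left inverse. The paper's argument is in fact silent on injectivity, presumably relying on the BWW construction being an explicit procedure (so distinct inputs yield distinct outputs) together with the fact that the glued metric still agrees with $g$ outside a fixed neighborhood of the gluing hypersurface. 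If you want to salvage your embedding argument without the one-sided step, that last observation already gives a continuous left inverse on a slightly shrunken $\Em^n\setminus\Disk^n$, which combined with the continuous dependence of the collar smoothing on $g$ suffices.
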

\begin{proof} We define a function $\nu:\Cores(\Em^n)\rightarrow \mathbf{R}_+$, for $g\in \Cores(\Em^n)$ let
$$ \nu(g):=  \dfrac{1}{2} \cdot \inf \left\{ \2_{g}(E,E)  :  {E\in T\Sphere^{n-1} } \text{ and }  g(E,E)=1 \right\}.$$
This function is continuous. Fix a one-parameter family of pRc metrics $h(\nu)$ on $\Disk^n\subseteq \Em^n $ so that the boundary is isometric to $\Sphere_1^{n-1}$ and $\2_{g(\nu)}=-\nu g(\nu)$. For instance one can take $h(\nu)$ to be the pull-back of an appropriately sized round metric restricted to a geodesic ball of an appropriate radius. 

As described in \cite[Theorem 2]{BWW}, it is possible to modify Theorem \ref{GluingTheorem} so that the pRc metric $g$ produced on $\Em_1^n\cup_\partial \Em_2^n$ depends continuously on the two metrics $g_1$ and $g_2$ and the isometry of the boundary. Thus we can define $\kap(g)\in\pRc(\Em^n)$ to be the result of applying \cite[Theorem 2]{BWW} to the metric $g$ on $\Em^n\setminus \Disk^n$ and $h(\nu(g))$ on $\Disk^n$ identified along the fixed embedding.  
\end{proof} 

To our knowledge the space $\Cores(\Em^n)$ has never been directly studied. The closest relative to $\Cores(\Em^n)$ that has been studied is the space of nonnegative Ricci curvature metrics on $\Disk^3$ with strictly convex boundaries. This space was studied in \cite{AMW}, where it was shown that it is path connected and the associated moduli space is contractible. The proofs of these two facts apply equally well to $\Cores(\Sphere^3)$ and $\MCores(\Sphere^3)$. Beyond this what is known about the topology of $\Cores(\Em^n)$ is what can be inherited from $\pRc(\Em^n)$ under the embedding $\kap$. Take for example the work of \cite{CSS}. Consider the diffeomorphisms of the disk leaving a neighborhood of the boundary fixed $\Diff(\Disk^n,\partial)$. Given an inclusion $\Disk^n\subseteq \Em^n$ there is a corresponding inclusion $\Diff(\Disk^n,\partial)\subseteq \Diff(\Em^n)$. The main results of \cite{CS,CSS} claim that many higher homotopy groups of $\pRc(\Em^n)$ are nontrivial for $n\ge 7$. These nontrivial elements are inherited from the action of $\Diff(\Disk^n,\partial)$ on $\pRc(\Em^n)$. Note that if we fix two disjoint disks in $\Em^n$, one disk for the action of $\Diff(\Disk^n,\partial)$ and the other for the inclusion $\kap:\Cores(\Em^n)\hookrightarrow \pRc(\Em^n)$, then the action of $\Diff(\Disk^n,\partial)$ will leave the image of $\kap$ invariant. It follows from \cite[Corollary 1.9]{CSS} that for $n\ge 7$, $\Cores(\Em^n)$ will have infinitely many nontrivial higher homotopy groups provided that $\Em^n$ is spin manifold that admits a core metric. 

This discussion shows that Conjecture \ref{Conjecture} is not trivial, as the space $\Cores(\Sphere^n)$ is not contractible. We conclude this section by noting that Lemma \ref{DiskLemma} combined with Theorem \ref{surgerytheorem} and Theorem \ref{SpinModuli} show that we can deduce the non-triviality of the space of all core metrics on a spin manifold.  

\begin{corollary}\label{CoreSpace} For $k\ge 2$, let $\Em^{4k-1}$ be any spin manifold that admits a core metric, then $\Cores(\Em^{4k-1})$ has infinitely many path components. If moreover $\Em^{4k-1}$ satisfies the topological hypotheses of Theorem \ref{KSTheorem} then $\MCores(\Em^{4k-1})$ has infinitely many connected components. 
\end{corollary}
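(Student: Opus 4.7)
The plan is to mirror the proofs of Theorems \ref{B} and \ref{SocketModuli}, but carry out the surgery construction inside $\Cores(\Em^{4k-1})$ so that the round hemispherical collar defining the core structure is never disturbed, and then transfer the distinctness of path components back through the embedding $\kap$.

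First, I would fix any core metric $h_0$ on $\Em^{4k-1}$, and for each $q\in\mathbf{N}$ choose $\rho_q<\kappa(k,qb_k,\pi/8,1)$, where $\kappa$ is the constant of Theorem \ref{WraithMetric}. Lemma \ref{DiskLemma} produces a core metric on $\Sphere^{4k-1}$ carrying an isometric embedding $\iota_q:\Sphere^{2k-1}_{\rho_q}\times\Disk^{2k}_1(\pi/8)\hookrightarrow\Disk^{4k-1}\subseteq\Sphere^{4k-1}\setminus\Disk^{4k-1}$. Using Corollary \ref{Connect} to glue this sphere-core to $h_0$ yields a core metric $g_0$ on $\Em^{4k-1}\#\Sphere^{4k-1}\cong\Em^{4k-1}$ that still carries $\iota_q$ inside the sphere piece, well away from the boundary collar inherited from $h_0$. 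Applying the iterated surgery of Theorem \ref{WraithMetric} on $\iota_q$ then gives a pRc metric $g_q$ on $(\Em^{4k-1}\#(\#_{qb_k}\Sigma^{4k-1}))\setminus\Disk^{4k-1}\cong\Em^{4k-1}\setminus\Disk^{4k-1}$; since the surgery only modifies the metric inside a tubular neighborhood of the image of $\iota_q$, it does not touch the hemispherical collar, so $g_q$ is itself a core metric.

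To distinguish path components, I would push the $g_q$ forward under $\kap:\Cores(\Em^{4k-1})\hookrightarrow\pRc(\Em^{4k-1})$. Because $\kap(g_q)$ is obtained from $\kap(g_0)$ by the same iterated surgery on $\iota_q$, the family $\{\kap(g_q)\}$ falls directly under the hypotheses of Theorem \ref{surgerytheorem} with base metric $\kap(g_0)$; hence the $\kap(g_q)$ lie in infinitely many path components of $\pRc(\Em^{4k-1})$, and continuity of $\kap$ forces the $g_q$ to lie in infinitely many path components of $\Cores(\Em^{4k-1})$. For the moduli claim, assuming the topological hypotheses of Theorem \ref{KSTheorem}, Theorem \ref{SpinModuli} gives $|s(\Em^{4k-1},\kap(g_q))|\to\infty$. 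An isometry between $(\Em^{4k-1}\setminus\Disk^{4k-1},g_q)$ and $(\Em^{4k-1}\setminus\Disk^{4k-1},g_{q'})$ restricts to an isometry of the round boundary, which extends uniquely across the standard round disk used to build $\kap$; hence $\kap$ descends to an injection $\MCores(\Em^{4k-1})\hookrightarrow\MpRc(\Em^{4k-1})$, and the distinctness of the $s$-invariants forces infinitely many path components in $\MCores(\Em^{4k-1})$.

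The main obstacle is verifying that the Wraith surgery is genuinely supported in a small neighborhood of $\iota_q$, so that the hemispherical collar survives intact and $g_q$ is a core metric rather than merely a pRc metric; this requires a careful inspection of the construction underlying Theorem \ref{WraithMetric} in \cite{WraithSurgery,WraithSphere} rather than just invoking it as a black box. A secondary subtlety is the descent of $\kap$ to moduli spaces, but this is handled by the rigidity of the round sphere: any isometry of the round boundary extends uniquely over the round disk attached in the construction of $\kap$, so orbit relations upstairs and downstairs match.
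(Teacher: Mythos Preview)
Your approach is essentially the one the paper has in mind: the paper states only that the corollary follows from Lemma \ref{DiskLemma} together with Theorems \ref{surgerytheorem} and \ref{SpinModuli}, and your proposal is precisely the natural way to unpack that sentence---build core metrics carrying the Wraith embedding by gluing the disk of Lemma \ref{DiskLemma} to a fixed core on $\Em^{4k-1}$, perform the iterated surgery away from the round boundary collar, and then push through $\kap$ to invoke the closed-manifold invariants.

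Two small points are worth tightening. First, as written your base core $g_0$ silently depends on $q$ through $\rho_q$, so the phrase ``with base metric $\kap(g_0)$'' is not quite right; the clean fix is exactly the one in the proof of Theorem \ref{surgerytheorem}: for each $m$ fix a single $\rho<\kappa(k,m,\pi/8,1)$, produce core metrics $g_q$ for all $|qb_k|\le m$ from the \emph{same} $g_0$, deduce at least $\lfloor m/b_k\rfloor$ path components, and let $m\to\infty$. Second, your main obstacle is genuinely the crux: the surgeries in \cite{WraithSurgery,WraithSphere} modify the metric only on the image of $\iota$ and on the handles subsequently attached there, so the boundary collar (and hence $\nu(g)$, the cap $h(\nu(g))$, and the BWW smoothing region) are untouched; this is what makes $\kap(g_q)$ literally equal to the Wraith surgery applied to $\kap(g_0)$, and what keeps $g_q$ a core metric. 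Your moduli argument via extending boundary isometries over the $O(4k)$-invariant round cap is correct in spirit; just note that the extension is unique only as an element of $O(4k)$, which is all you need for $\kap$ to descend to orbits.
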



\subsection{Exotic smooth structures}\label{ExoticSection}


Note that Theorem \ref{WraithSpaceTheorem} and \ref{WraithModuliTheorem} are both stated for any smooth homotopy sphere in $bP_{4k}$. This is because any element of $bP_{4k}$ is diffeomorphic to $\Sphere^{4k-1}\#\left( \#_p \Sigma^{4k-1}\right)$ for some $p$. Note that both Theorem \ref{B} and Theorem \ref{SocketModuli} apply equally well to $\Em^{4k-1}\# \left( \#_p \Sigma^{4k-1}\right)$, and it is natural to ask if these represent distinct smooth manifolds for each $p$. 

 For any topological manifold $\Em^n$ that is not necessarily a homotopy spheres, we define the set $\Structure(\Em^n)$ as the set of diffeomorphism classes of smooth manifolds that are homeomorphic to $\Em^n$. This set can also be thought of the collection of all ``smoothings'' of the underlying topological manifold. For $n\ge 5$,  $\Structure(\Sphere^n)=\Theta_n$ has a group structure given by connected sum. For any manifold $\Em^n$, we see that $\Theta_n$ acts on $\Structure(\Em^n)$ by connected sum. Assuming that $\Em^n$ is already smooth, we can consider the subgroup $\Inertia(\Em^n)\le \Theta_n$ that leaves the class $[\Em^n]\in \Structure(\Em^n)$ fixed, in other words $\Sigma^n\in \Inertia(\Em^n)$ precisely when $\Em^n\#\Sigma^n$ is diffeomorphic to $\Em^n$. Thus for $\Sigma^{n}\notin \Inertia(\Em^n)$, $\Em^n\# \left(\# \Sigma^n\right)$ is a distinct smooth manifold. While computing $\Inertia(\Em^n)$ is beyond the scope of this note, we would like to record from the literature what is known about the inertia groups found in Theorem \ref{A} above. 
 
 Before we continue with our discussion of inertia groups, we remark that Lemma \ref{DiskLemma} combined with the work of \cite{WraithExotic} allow us to construct connected sums with a few more exotic spheres than those in $bP_{4k}$. 
 
 \begin{corollary}\label{AllTheSpheres} If $\Em^n$ admits a socket metric, then $\Em^n\# \Sigma^n$ also admits a socket metric and hence a pRc metric whenever:
 \begin{enumerate}
 \item $n=4k-1$, and $\Sigma\in bP_{4k}\cong\mathbf{Z}/b_k\mathbf{Z}$,
 \item $n=4k+1$, and $\Sigma\in bP_{4k+2} = \mathbf{Z}/2\mathbf{Z}$ whenever $k\notin \{1,3,7,15,31\}$,
 \item $n=8,16$, and $\Sigma \in \Theta_n=\mathbf{Z}/2\mathbf{Z}$,
 \item $n=19$, and $\Sigma \in \Theta_{19}=\mathbf{Z}/2\mathbf{Z}\oplus \mathbf{Z}/b_5\mathbf{Z}$.
 \end{enumerate}
 \end{corollary}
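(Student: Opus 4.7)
My plan is to reduce the statement to showing that each listed exotic sphere $\Sigma^n$ admits a core metric. Once this is established, the second half of Corollary \ref{Connect} immediately produces a socket metric on $\Em^n \# \Sigma^n$ whenever $\Em^n$ admits one.

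For item (1), every element of $bP_{4k}$ has the form $\#_p \Sigma_0^{4k-1}$ for some integer $p$, where $\Sigma_0^{4k-1}$ is a fixed generator. I would apply Lemma \ref{DiskLemma} with $n = 2k-1$ and $m = 2k$: for any $\rho > 0$ this yields a core metric on $\Sphere^{4k-1}$ admitting an isometric embedding $\iota : \Sphere^{2k-1}_\rho \times \Disk^{2k}_1(\pi/8) \hookrightarrow \Sphere^{4k-1}$ whose image lies inside the core region, disjoint from a collar of the round hemispherical boundary. Taking $\rho$ smaller than the threshold $\kappa(k, p, 1, \pi/8)$ from Theorem \ref{WraithMetric}, one performs iterated framed surgeries on $\iota$ to produce a pRc metric on $\Sphere^{4k-1} \# \left(\#_p \Sigma_0^{4k-1}\right) = \#_p \Sigma_0^{4k-1}$. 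Because these surgeries are supported inside the image of $\iota$, the round strictly convex hemispherical boundary is untouched and the resulting metric is a core metric on $\#_p \Sigma_0^{4k-1}$. Corollary \ref{Connect} then produces the desired socket metric on $\Em^{4k-1} \# \left(\#_p \Sigma_0^{4k-1}\right)$.

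For items (2), (3), and (4), the pRc metrics on the relevant exotic spheres were originally built in \cite{WraithExotic} by suitable framed surgeries on explicit sphere bundles, in the same spirit as Theorem \ref{WraithMetric}. The same strategy applies: starting from the core metric on $\Sphere^n$ of Lemma \ref{DiskLemma}, the embedded product $\Sphere^k_\rho \times \Disk^{n-k}$ provides enough room to reproduce Wraith's construction inside a bounded tube away from the round hemisphere. The output is a core metric on the exotic sphere in question, and a final application of Corollary \ref{Connect} closes each case.

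The main obstacle lies in the localization claim for items (2)--(4): the constructions in \cite{WraithExotic} for the Kervaire spheres and for the generators of $\Theta_8$, $\Theta_{16}$, and $\Theta_{19}$ involve more intricate framed surgeries than the straightforward iteration of Theorem \ref{WraithMetric}, and one must trace through the sufficient-smallness estimates on $\rho$ to confirm both that the pRc surgery machinery applies inside the embedded tube and that the round hemisphere remains undisturbed. This is essentially a bookkeeping exercise that requires a close re-reading of \cite{WraithExotic}, and it is where the bulk of the proof effort lies.
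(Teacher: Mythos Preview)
Your approach is essentially the same as the paper's: both invoke Lemma~\ref{DiskLemma} to supply the embedded product $\Sphere_\rho \times \Disk$ and then appeal to the surgery results of \cite{WraithExotic} (specifically \cite[Lemma~2.4]{WraithExotic} together with \cite[Satz~12.1]{StolzBook} for the sporadic cases) to realize the listed exotic spheres. The only organizational difference is that you first build a core metric on $\Sigma^n$ and then apply Corollary~\ref{Connect}, whereas the paper glues the disk of Lemma~\ref{DiskLemma} directly onto the socket manifold $\Em^n$ and performs surgery there; your ordering has the mild advantage of making the socket conclusion on $\Em^n \# \Sigma^n$ more transparent and of producing, as a byproduct, a reusable core metric on each $\Sigma^n$.
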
 
 \begin{proof} By Lemma \ref{DiskLemma}, any manifold that admits a socket metric admits a family of pRc metrics so that there is an isometric embedding $\iota: \Sphere^n_\rho \times \Disk_1^m(\pi/8)\hookrightarrow \Em^{n+m}$ whenever $n\ge 2$ and $m\ge 4$.
 
 When $m=k$ and $n=k-1$, we may use \cite[Lemma 2.4]{WraithExotic} to perform iterated framed surgeries on $\Sphere^{k-1}$ starting from $\iota$, thus it is possible to achieve any element of $bP_{2k}$ as explained in the proof of \cite[Theorem 2.2]{WraithExotic}.
 
When $m>n+1$, we may again use \cite[Lemma 2.4]{WraithExotic} to perform surgery on $\iota$. By \cite[Satz 12.1]{StolzBook}, the remaining spurious examples in $\Theta_8$, $\Theta_{16}$, and $\Theta_{19}$ can all be achieved in this way as explained in the proof of \cite[Theorem 2.3]{WraithExotic}.
 \end{proof} 
 
 With Corollary \ref{AllTheSpheres} in hand, we can now list what is known about the inertia groups of manifolds found in Theorem \ref{A}. 
 
 \begin{theorem}\label{InertiaTheorem} 
 \begin{enumerate}
 \item If $\Sigma^{2n-1}\in \Inertia(L(m;q_1,\dots q_n))$, then $\#_m \Sigma^{2n-1} \cong \Sphere^{2n-1}$. 
 \item\cite[Theorem 1]{Kaw} $\Inertia(\CP^n)=0$ for $n\le 8$.
 \item\cite{Brow} $\Inertia(\RP^{4k-1})\cap bP_{4k-1}=0$ for all $k>1$. 
 \item\cite[Theorem A]{Schultz} for $n\ge 5$, if $\Em^n$ is a product of spheres, then $\Inertia(\Em^n)=0$.
 \item\cite[Corollary 2]{Frame} for $n\ge 7$, if $\Em^n$ is $2$-connected then $\Inertia(\Em^n)=\Inertia(\Em^n\# \left(\#_p \Sphere^2\times \Sphere^{n-2}\right))$. 
 \end{enumerate}
 \end{theorem}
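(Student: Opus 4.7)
Items (2)--(5) of Theorem \ref{InertiaTheorem} are direct citations to the literature (\cite{Kaw,Brow,Schultz,Frame}), so only item (1) requires an original argument. The plan is to reduce this claim to an elementary universal cover computation, with no significant obstacle anticipated.

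First, I would unpack the hypothesis. By the definition of $\Inertia$, the statement $\Sigma^{2n-1} \in \Inertia(L(m;q_1,\dots,q_n))$ is exactly the assertion that there is a diffeomorphism $L(m;q_1,\dots,q_n) \# \Sigma^{2n-1} \cong L(m;q_1,\dots,q_n)$. Any such diffeomorphism lifts to a diffeomorphism of universal covers, so it suffices to identify both of these universal covers and equate them.

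Next, I would compute the two universal covers directly. The universal cover of $L = L(m;q_1,\dots,q_n)$ is $\Sphere^{2n-1}$, by the very definition of the lens space as the quotient of $\Sphere^{2n-1}$ by a free action of $\mathbf{Z}/m\mathbf{Z}$. For $L \# \Sigma^{2n-1}$, the connected sum is formed by excising an open disk $\Disk \subseteq L$ and attaching $\Sigma^{2n-1}\setminus\Disk$ along the resulting boundary sphere. Because the deck action is free and $\Disk$ is contractible, the preimage of $\Disk$ in $\Sphere^{2n-1}$ is a disjoint union of $m$ open disks. The universal cover of $L \# \Sigma^{2n-1}$ is therefore obtained from $\Sphere^{2n-1}$ by removing these $m$ disks and gluing in $m$ copies of $\Sigma^{2n-1}\setminus \Disk$, producing $\Sphere^{2n-1} \# \left(\#_m \Sigma^{2n-1}\right)$, which simplifies to $\#_m \Sigma^{2n-1}$. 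Equating the two computed universal covers yields $\Sphere^{2n-1} \cong \#_m \Sigma^{2n-1}$, as claimed.

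The only point requiring any care is verifying that the connected sum construction genuinely commutes with passage to the regular cover; this is immediate from the fact that the connected sum is localized inside a contractible, evenly covered open set, so the gluing data lifts disk by disk. I do not anticipate any nontrivial obstacle.
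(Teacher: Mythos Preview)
Your argument for item (1) is correct and is the standard elementary universal-cover computation. The paper itself does not supply a proof of Theorem \ref{InertiaTheorem}: it presents the theorem as a list of facts from the literature, with items (2)--(5) carrying explicit citations and item (1) left uncited, presumably as folklore. So there is no ``paper's own proof'' to compare against; your write-up simply fills in the omitted elementary step.
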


Sadly $\Inertia(\Em^n)$ is not well behaved with respect to connected sums or with products, so we are limited to only a few special cases of manifolds produced in Theorem \ref{A}. The following is immediate from Theorem \ref{TheCores}, Corollary \ref{AllTheSpheres}, and Theorem \ref{InertiaTheorem}

\begin{corollary}\label{ExoticRicci} In each of the following, $\Em^n\#\Sigma^n$ admits a metric of pRc and each of the $\Em^n\#\Sigma^n$ represent distinct smooth structures on $\Em^n$ for distinct $\Sigma^n$. 
\begin{enumerate}
\item For any $k>1$ and any odd $m$, $\Em^{4k+1}=L(m;q_1,\dots,q_{2k+1})$ with $\Sigma^{4k+1}\in bP_{4k+2}$. 
\item For each $k>1$ and any prime $p>b_k$, $\Em^{4k-1}=L(p;q_1,\dots, q_{2k})$ with $\Sigma^{4k-1}\in bP_{4k+2}$. 
\item For each $k>1$, $\Em^{4k-1}=\RP^{4k-1}$ with $\Sigma^{4k-1}\in bP_{4k}$.
\item For $k=4,8$, $\Em^{2k}=\CP^k$ with $\Sigma^{2k}\in \Theta_{2k}$
\item For $k>3$, $\Em^{2k-1}$ defined as follows with $\Sigma^{2k-1}\in bP_{2k}$. For any multi-index $I$ such that $I_1+\dots + I_l=2k-1$ such that $I_i\ge 3$ and any $p$ let
$$\Em^{2k-1}:= \left(\prod_{1\le i\le l} \Sphere^{I_i}\right) \# \left(\#_p \left( \Sphere^2\times \Sphere^{2k-3}\right)\right).$$
\end{enumerate}
\end{corollary}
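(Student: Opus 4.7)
The approach is to combine the two existing technologies already developed in this paper: Corollary \ref{AllTheSpheres} (which hands us a pRc metric on $\Em^n\#\Sigma^n$ as soon as $\Em^n$ admits a socket metric and $\Sigma^n$ lies in an allowable exotic sphere group) and Theorem \ref{InertiaTheorem} (which lets us rule out $\Sigma^n$ lying in the inertia group, so that the smooth structures are genuinely distinct). The proof in each of the five cases amounts to verifying two ingredients separately, with no new geometric content required.

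First I would dispatch the socket metric hypothesis. Cases (1)–(3) are immediate from Theorem \ref{TheSockets}, and case (4) from Theorem \ref{TheCores}(1). For case (5), each $\Sphere^{I_i}$ with $I_i\ge 3$ lies in $\Class$, and the product $\prod_i\Sphere^{I_i}$ is built out of trivial sphere bundles with fiber dimension $I_i\ge 3$, i.e.\ from rank-$(I_i+1)\ge 4$ vector bundles, so Theorem \ref{TheCores}(2) applies iteratively. The factor $\Sphere^2\times\Sphere^{2k-3}$ is placed in $\Class$ by regarding it as a $\Sphere^{2k-3}$-bundle over $\Sphere^2$, whose rank $2k-2$ is $\ge 4$ for $k\ge 3$. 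Corollary \ref{Connect} then gives a core (hence socket) metric on the connected sum $\Em^{2k-1}$. With socket metrics in hand, Corollary \ref{AllTheSpheres} produces a pRc metric on each $\Em^n\#\Sigma^n$, matching the exotic sphere groups listed: $bP_{4k+2}$ in (1), $bP_{4k}$ in (2), (3), and $bP_{2k}$ in (5), and $\Theta_8$ or $\Theta_{16}$ in (4).

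Next I would verify distinctness of smooth structures by showing $\Sigma^n\notin \Inertia(\Em^n)$ for each nontrivial $\Sigma^n$ under consideration. For (1), Theorem \ref{InertiaTheorem}(1) says $\Sigma\in\Inertia(L(m;\dots))$ forces $\#_m\Sigma\cong\Sphere^{4k+1}$; since $m$ is odd and $bP_{4k+2}\cong\mathbf{Z}/2\mathbf{Z}$, the element $\#_m\Sigma$ equals $\Sigma$, which is nontrivial, a contradiction. Case (2) is identical in spirit: $\gcd(p,b_k)=1$ because $p$ is prime and $p>b_k$, so multiplication by $p$ is injective on $bP_{4k}\cong\mathbf{Z}/b_k\mathbf{Z}$ and $\#_p\Sigma\neq 0$. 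Case (3) follows directly from Theorem \ref{InertiaTheorem}(3), and case (4) directly from Theorem \ref{InertiaTheorem}(2) applied at $n=4$ and $n=8$. For case (5), the product $\prod_i\Sphere^{I_i}$ is $2$-connected (since each $I_i\ge 3$), so Theorem \ref{InertiaTheorem}(5) reduces the computation of $\Inertia(\Em^{2k-1})$ to that of $\prod_i\Sphere^{I_i}$, which vanishes by Theorem \ref{InertiaTheorem}(4).

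Putting these two steps together case-by-case completes the proof. There is no real obstacle: the work has been done in the earlier sections, and Corollary \ref{ExoticRicci} is a matter of pattern-matching the hypotheses of Corollary \ref{AllTheSpheres} and Theorem \ref{InertiaTheorem} against the specific manifolds. The only point requiring some care is the $\Class$-membership verification in case (5), where one must choose the correct sphere-bundle factorization to satisfy the rank $\ge 4$ condition of Definition \ref{ClassDefinition}; once that is noted, all remaining statements are direct citations.
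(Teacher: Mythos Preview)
Your proposal is correct and matches the paper's approach exactly: the paper states only that the corollary ``is immediate from Theorem \ref{TheCores}, Corollary \ref{AllTheSpheres}, and Theorem \ref{InertiaTheorem}'' without further detail, and you have accurately supplied the case-by-case verification that those citations entail. The only additional ingredients you invoke---Theorem \ref{TheSockets} for the lens spaces and $\RP^{4k-1}$, and Corollary \ref{Connect} for the connected sums in case (5)---are precisely what is needed to feed into Corollary \ref{AllTheSpheres}, and your handling of the rank-$\ge 4$ condition and the inertia group arithmetic in each case is correct.
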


We end by noting that each of the examples $\Em^n$ produced in Corollary \ref{ExoticRicci} in dimension $n=4k-1$ will also satisfy the hypotheses of Theorem \ref{B} (and in fact Corollary \ref{SpinModuli}), and hence we can enhance the conclusion as follows.

\begin{corollary} Let $\Em^{4k-1}$ be any of the manifolds listed in (2) and (5) of Corollary \ref{ExoticRicci}, then $\MpRc(\Em^{4k-1})$ and hence $\pRc(\Em^{4k-1})$ have infinitely many path components. 
\end{corollary}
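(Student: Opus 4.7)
The plan is to check that each manifold $\Em^{4k-1}$ listed in (2) and (5) of Corollary \ref{ExoticRicci} satisfies all four hypotheses of Theorem \ref{SocketModuli}, namely: spin, admits a socket metric, $H^1(\Em^{4k-1},\mathbf{Z}/2\mathbf{Z})=0$, and all Pontryagin classes vanish. Once verified, Theorem \ref{SocketModuli} delivers the conclusion for $\MpRc(\Em^{4k-1})$, and the statement for $\pRc(\Em^{4k-1})$ follows \emph{a fortiori} since a space with finitely many path components cannot surject onto one with infinitely many. The existence of a socket metric comes for free from the setup of Corollary \ref{ExoticRicci}: that corollary is deduced from Corollary \ref{AllTheSpheres}, which transfers a socket metric on $\Em^n$ to a socket metric on $\Em^n\#\Sigma^n$. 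Hence only the three topological conditions require checking.

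For the lens space examples in (2), $\Em^{4k-1}=L(p;q_1,\dots,q_{2k})\#\Sigma^{4k-1}$ with $\Sigma\in bP_{4k}$ and $p$ an odd prime: since $p$ is odd, $\pi_1(\Em^{4k-1})=\mathbf{Z}/p\mathbf{Z}$ has no $2$-torsion, so $H^1(\Em^{4k-1},\mathbf{Z}/2\mathbf{Z})=0$ and the lens space is spin (and connected sum with a homotopy sphere preserves both properties). The Pontryagin class calculation is identical to the one in the proof of Theorem \ref{ModuliList}: by Szczarba's formula the total Pontryagin class is $\prod_i(1+q_i^2 a^2)$ with $a$ a generator of $H^2(L;\mathbf{Z})=\mathbf{Z}/p\mathbf{Z}$, so vanishing of all Pontryagin classes is the condition $e_i(q_1^2,\dots,q_{2k}^2)\equiv 0\pmod p$ for $1\le i\le k$. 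This is implicit in the selection of $q_i$ (and, by \cite[Theorem]{EMSS}, can be arranged for each sufficiently large prime $p$). Connected sum with $\Sigma^{4k-1}$ does not affect cohomology or Pontryagin classes since $\Sigma$ is homeomorphic to $S^{4k-1}$.

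For the products-of-spheres examples in (5), $\Em^{4k-1}=\left(\prod_i \Sphere^{I_i}\right)\#\left(\#_p\Sphere^2\times\Sphere^{2k-3}\right)\#\Sigma^{4k-1}$ (with the dimensional matching $2k-1=4k'-1$): each $I_i\ge 3$ so every factor of the product is simply connected, and $\Sphere^2\times\Sphere^{2k-3}$ is simply connected as well. Simple connectivity is preserved under connected sum in dimension $\ge 3$, giving $H^1(\Em^{4k-1},\mathbf{Z}/2\mathbf{Z})=0$ and the spin property. For the Pontryagin classes, every sphere is stably parallelizable, products of stably parallelizable manifolds are stably parallelizable, and stable parallelizability is preserved under connected sum, so all Pontryagin classes vanish. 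As above, connecting with $\Sigma\in bP_{4k}$ changes the smooth structure but not the underlying topology, so these conditions persist.

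The main obstacle, and the one that requires the most care, is the Pontryagin class condition in case (2), which is genuinely a constraint on the coefficients $q_i$ defining the lens space rather than a consequence of $p$ being a large prime. This is handled by importing exactly the computation used in the proof of Theorem \ref{ModuliList}, and by citing \cite[Theorem]{EMSS} to guarantee that admissible $q_i$ exist. With this in place, Theorem \ref{SocketModuli} applies to both families and the corollary follows.
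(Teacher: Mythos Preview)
Your approach matches the paper's: the paper offers no formal proof, only the sentence preceding the corollary asserting that the examples satisfy the hypotheses of Theorem~\ref{B} and Theorem~\ref{SpinModuli} (equivalently Theorem~\ref{SocketModuli}). You are doing exactly that verification in detail, which is the right thing.

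There is one genuine wrinkle you have noticed but not quite resolved. For the lens spaces in (2) of Corollary~\ref{ExoticRicci}, the only hypothesis stated there is that $p>b_k$ is prime; no congruence condition on the $q_i$ is imposed. As you compute (following the proof of Theorem~\ref{ModuliList}), vanishing of the Pontryagin classes is equivalent to $e_i(q_1^2,\dots,q_{2k}^2)\equiv 0\pmod p$ for $1\le i\le k$, and this is \emph{not} automatic from $p>b_k$. Saying it is ``implicit in the selection of $q_i$'' overstates what Corollary~\ref{ExoticRicci} actually assumes. The honest reading is that the final corollary, as written, is slightly imprecise for case~(2): the moduli-space conclusion applies only to those lens spaces in (2) that additionally satisfy the congruence conditions of Theorem~\ref{ModuliList} (and \cite{EMSS} guarantees such choices exist for each $k$). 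Your argument is correct once this restriction is made explicit; the looseness is in the paper's statement, not in your method.

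For case~(5) your verification is clean and complete. One small cosmetic point: the ``dimensional matching $2k-1=4k'-1$'' you mention is already forced by the corollary's hypothesis that $\Em$ has dimension $4k-1$, so you need not introduce a second index.
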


\bibliographystyle{alpha.bst} 
\bibliography{references}

\end{document}